\begin{document}
\newtheorem{problem}{Problem}
\newtheorem{theorem}{Theorem}
\newtheorem{lemma}[theorem]{Lemma}
\newtheorem{claim}[theorem]{Claim}
\newtheorem{cor}[theorem]{Corollary}
\newtheorem{prop}[theorem]{Proposition}
\newtheorem{definition}{Definition}
\newtheorem{question}[theorem]{Question}

%%%%%%%%%%%%%%%%%%%%%%%%%
% Alphabet calligraphic %
%%%%%%%%%%%%%%%%%%%%%%%%%
\def\cA{{\mathcal A}}
\def\cB{{\mathcal B}}
\def\cC{{\mathcal C}}
\def\cD{{\mathcal D}}
\def\cE{{\mathcal E}}
\def\cF{{\mathcal F}}
\def\cG{{\mathcal G}}
\def\cH{{\mathcal H}}
\def\cI{{\mathcal I}}
\def\cJ{{\mathcal J}}
\def\cK{{\mathcal K}}
\def\cL{{\mathcal L}}
\def\cM{{\mathcal M}}
\def\cN{{\mathcal N}}
\def\cO{{\mathcal O}}
\def\cP{{\mathcal P}}
\def\cQ{{\mathcal Q}}
\def\cR{{\mathcal R}}
\def\cS{{\mathcal S}}
\def\cT{{\mathcal T}}
\def\cU{{\mathcal U}}
\def\cV{{\mathcal V}}
\def\cW{{\mathcal W}}
\def\cX{{\mathcal X}}
\def\cY{{\mathcal Y}}
\def\cZ{{\mathcal Z}}

%%%%%%%%%%%%%%%%%%%%%%%
% Alphabet blackboard %
%%%%%%%%%%%%%%%%%%%%%%%
\def\A{{\mathbb A}}
\def\B{{\mathbb B}}
\def\C{{\mathbb C}}
\def\D{{\mathbb D}}
\def\E{{\mathbb E}}
\def\F{{\mathbb F}}
\def\G{{\mathbb G}}
\def\I{{\mathbb I}}
\def\J{{\mathbb J}}
\def\K{{\mathbb K}}
\def\L{{\mathbb L}}
\def\M{{\mathbb M}}
\def\N{{\mathbb N}}
\def\O{{\mathbb O}}
\def\P{{\mathbb P}}
\def\Q{{\mathbb Q}}
\def\R{{\mathbb R}}
\def\S{{\mathbb S}}
\def\T{{\mathbb T}}
\def\U{{\mathbb U}}
\def\V{{\mathbb V}}
\def\W{{\mathbb W}}
\def\X{{\mathbb X}}
\def\Y{{\mathbb Y}}
\def\Z{{\mathbb Z}}

\def\ep{{\mathbf{e}}_p}
\def\em{{\mathbf{e}}_m}
\def\eq{{\mathbf{e}}_q}

\def\scr{\scriptstyle}
\def\\{\cr}
\def\({\left(}
\def\){\right)}
\def\[{\left[}
\def\]{\right]}
\def\<{\langle}
\def\>{\rangle}
\def\fl#1{\left\lfloor#1\right\rfloor}
\def\rf#1{\left\lceil#1\right\rceil}
\def\le{\leqslant}
\def\ge{\geqslant}
\def\eps{\varepsilon}
\def\mand{\qquad\mbox{and}\qquad}

\def\sssum{\mathop{\sum\ \sum\ \sum}}
\def\ssum{\mathop{\sum\, \sum}}
\def\ssumw{\mathop{\sum\qquad \sum}}

\def\vec#1{\mathbf{#1}}
\def\inv#1{\overline{#1}}
\def\num#1{\mathrm{num}(#1)}
\def\dist{\mathrm{dist}}

\def\fA{{\mathfrak A}}
\def\fB{{\mathfrak B}}
\def\fC{{\mathfrak C}}
\def\fU{{\mathfrak U}}
\def\fV{{\mathfrak V}}

\newcommand{\bflambda}{{\boldsymbol{\lambda}}}
\newcommand{\bfxi}{{\boldsymbol{\xi}}}
\newcommand{\bfrho}{{\boldsymbol{\rho}}}
\newcommand{\bfnu}{{\boldsymbol{\nu}}}

\def\GL{\mathrm{GL}}
\def\SL{\mathrm{SL}}

\def\Hba{\overline{\cH}_{a,m}}
\def\Hta{\widetilde{\cH}_{a,m}}
\def\Hb1{\overline{\cH}_{m}}
\def\Ht1{\widetilde{\cH}_{m}}

\def\flp#1{{\left\langle#1\right\rangle}_p}
\def\flm#1{{\left\langle#1\right\rangle}_m}
\def\dmod#1#2{\left\|#1\right\|_{#2}}
\def\dmodq#1{\left\|#1\right\|_q}

\def\Zm{\Z/m\Z}

\def\Err{{\mathbf{E}}}

\newcommand{\comm}[1]{\marginpar{%
\vskip-\baselineskip %raise the marginpar a bit
\raggedright\footnotesize
\itshape\hrule\smallskip#1\par\smallskip\hrule}}

\def\xxx{\vskip5pt\hrule\vskip5pt}

%%%%%%%%%%%%%%%%%%
%% PAPER BEGINS %%
%%%%%%%%%%%%%%%%%%

\title{On the cubic Weyl sum}

 \author[B. Kerr] {Bryce Kerr}
\address{Max Planck Institute for Mathematics, Bonn, Germany}
\email{bryce.kerr89@gmail.com}

%\keywords{??}
%\subjclass[2010]{??}
\begin{abstract}  
We obtain an estimate for the cubic Weyl sum which improves the bound obtained from Weyl differencing for short ranges of summation. In particular, we show that for any $\varepsilon>0$ there exists some $\delta>0$ such that for any coprime integers $a,q$ and real number $\gamma$  we have 
\begin{align*}
\sum_{1\le n \le N}e\left(\frac{an^3}{q}+\gamma n\right)\ll (qN)^{1/4}q^{-\delta},
\end{align*}
provided $q^{1/3+\varepsilon}\le N \le q^{1/2-\varepsilon}$. Our argument builds on some ideas of Enflo.
\end{abstract} 

\maketitle
\section{Introduction}
Let $g(x)=\alpha_d x^d+\dots+\alpha_1 x$ be a polynomial of degree $d\ge 2$ with real coefficients and consider the problem of bounding sums
\begin{align*}
\sum_{1\le n \le N}e(g(n)).
\end{align*}
This was first considered by Weyl~\cite{Weyl} who developed a technique known as Weyl differencing which shows 
\begin{equation}
\label{eq:weyl}
\sum_{1\le n \le N}e(g(n))\ll N^{1+o(1)}\left(\frac{1}{q}+\frac{1}{N}+\frac{q}{N^d}\right)^{2^{-(d-1)}},
\end{equation}
provided the leading coefficient $\alpha_d$ of $g$ satisfies 
\begin{align*}
\left|\alpha_d-\frac{a}{q}\right|\le \frac{1}{q^2},
\end{align*}
for some coprime integers $a,q$, see~\cite[Lemma~2.4]{Vau}. For large values of $d$ the estimate~\eqref{eq:weyl} has been improved via bounds related to the Vinogradov mean value theorem while~\eqref{eq:weyl} remains sharpest for $d\le 5$. 

 In this paper we consider the problem of improving the bound~\eqref{eq:weyl} in the case of monomials with $d=3$. In particular, our sums take the form
\begin{align}
\label{eq:cb1}
\sum_{1\le n \le N}e\left(\frac{an^3}{q}\right).
\end{align}
This problem has been considered by a number of previous authors. Enflo~\cite{Enflo} has some nice discussions on different approaches and parts of our ideas and analysis are based on this work. We mention an open problem from~\cite[Section~2]{Enflo} to improve on the P\'{o}lya-Vinogradov bound for sums
\begin{align}
\label{eq:pv123}
\sum_{1\le n \le N}\chi(n)e\left(\frac{an^3+b\overline{n}+cn}{q} \right)\ll q^{1/2+o(1)},
\end{align}
where $\chi$ is a multiplicative character mod $q$. Any improvement on~\eqref{eq:pv123} would give a new bound for~\eqref{eq:cb1} in certain ranges of parameters.

 Heath-Brown~\cite{HB} has applied the $q$-van der Corput method to the sums~\eqref{eq:cb1} 
and showed how one may improve~\eqref{eq:weyl} provided $q$ has suitable factorization. 

We refer the reader to Nakai~\cite{Nakai} for some ideas related to generalizing the classic Hardy-Littlewood argument~\cite{HL} from the setting of quadratic to cubic sums.

 In this paper we obtain a new bound for cubic sums with a short range of summation.
 \begin{theorem}
\label{thm:main1}
Let $\delta>0$, $a,q$ coprime integers and $\gamma$ a real number. If $N$ satisfies 
\begin{align*}
q^{1/3+\delta}\ll N \ll q^{1/2-\delta/10+o(1)},
\end{align*}
then 
\begin{align*}
\sum_{1\le n \le N}e\left(\frac{an^3}{q}+\gamma n \right)\ll (qN)^{1/4+o(1)}q^{-\delta/20}.
\end{align*}
\end{theorem}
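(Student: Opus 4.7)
In the given range of $N$ the Weyl differencing bound \eqref{eq:weyl} simplifies to $S\ll (qN)^{1/4+o(1)}$, so the task is to squeeze out an extra factor of $q^{-\delta/20}$. My plan is to apply the Weyl--van der Corput inequality once, complete the resulting quadratic exponential sums via Poisson summation, and then exploit Weil-type cancellation in the family of Gauss sums as the differencing parameter varies.

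Let $S$ denote the sum to be estimated. With a shift parameter $H\in[1,N]$ to be chosen, Weyl--van der Corput gives
\begin{align*}
|S|^2\ll \frac{N^2}{H}+\frac{N}{H}\sum_{1\le h\le H}|T(h)|,\qquad T(h)=\sum_{n\le N-h}e\left(\frac{3ahn(n+h)}{q}\right).
\end{align*}
For any $h$ with $d_h:=\gcd(q,3h)$ small, Poisson summation in $n$ completes $T(h)$ into a short Fourier-weighted sum of classical Gauss sums of modulus $q_h:=q/d_h$; each such Gauss sum evaluates as $\epsilon_h\sqrt{q_h}\,e(-\ell^2\overline{12ah}/q_h)$ for a suitable root of unity $\epsilon_h$, where $\overline{12ah}$ denotes an inverse modulo $q_h$ and $\ell$ is the dual variable supported on $|\ell|\ll q_h/N$ (up to logarithmic factors). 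The linear term $\gamma n$ and the $h$-dependent constants in the phase are absorbed into the kernel and do not affect the structure of the Gauss sums.

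Substituting the Gauss sum evaluation and swapping the order of summation reduces the problem to bounding expressions of the form
\begin{align*}
\sum_{h\le H}\frac{\epsilon_h}{\sqrt{q_h}}\,K(\ell,h)\,e\left(-\frac{\ell^2\,\overline{12ah}}{q_h}\right)
\end{align*}
uniformly in $\ell$, where $K(\ell,h)$ is a smooth kernel arising from the Poisson step. Because $\overline{12ah}$ depends on $h$ through its modular inverse, the inner sum over $h$ is essentially a Kloosterman-type sum in $h$, for which Weil's bound yields square-root cancellation beyond the trivial bound $H$. Propagating this cancellation through the Weyl--van der Corput estimate and optimising $H$ then produces the desired saving over the Weyl bound.

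The main obstacle is executing this analysis uniformly in $h$. Specifically, the modulus $q_h$ varies with $h$, and when $d_h$ is large the Weil bound saves less---or nothing---so a dyadic dissection of $h$ according to $\gcd(q,3h)$ is required, with the ``bad'' ranges (those producing a small $q_h$) controlled by trivial or intermediate bounds. Balancing the Weil saving against these bad ranges, and against the $N^2/H$ term from Weyl--van der Corput, is what should force the concrete constraint $N\ll q^{1/2-\delta/10}$ and the explicit saving exponent $\delta/20$; the lower hypothesis $N\gg q^{1/3+\delta}$ ensures that the shift $H$ maximising the Kloosterman saving is nontrivial and keeps $N^2/H$ from dominating. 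The dependence on $\gamma$ is harmless since $\gamma n$ disappears in the Weyl-van der Corput differencing up to an $h$-dependent phase that is irrelevant to $|T(h)|$.
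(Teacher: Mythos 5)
Your proposal diverges from the paper's argument after the first Weyl differencing and Poisson step, and the route you then take runs into a genuine obstruction.

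After completing $T(h)$ via Poisson summation and evaluating the Gauss sums, the phase you obtain for fixed dual variable $\ell$ is not simply $-\ell^2\,\overline{12ah}/q_h$. Expanding $-(3ah^2-\ell)^2\,\overline{12ah}$ modulo $q_h$ gives (up to normalizing inverses of $2$ and $4$)
\begin{align*}
-\frac{3ah^3}{4}+\frac{\ell h}{2}-\ell^2\,\overline{12ah}\pmod{q_h},
\end{align*}
so the $h$-sum you need to bound carries a genuine cubic arithmetic phase $e_{q_h}(-\overline{4}\cdot 3ah^3)$ and a linear term $e_{q_h}(\overline{2}\ell h)$ in addition to the $\overline{h}$ term, together with a multiplicative character in $h$ coming from the Jacobi symbol $\epsilon_h$ in the Gauss sum evaluation. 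These cannot be "absorbed into the kernel" as you suggest: they oscillate arithmetically mod $q_h$, not smoothly. What you actually arrive at is precisely the sum in \eqref{eq:pv123},
\begin{align*}
\sum_{h\le H}\chi(h)\,e\!\left(\frac{a'h^3+b'\overline{h}+c'h}{q_h}\right),
\end{align*}
and the paper points out explicitly (citing Enflo) that improving the P\'olya--Vinogradov bound $q^{1/2+o(1)}$ for such sums is an \emph{open problem}. Weil's bound applies to the complete sum over all residues and only gives back $q_h^{1/2+o(1)}$ after completion; since the $h$-range is $H\le N\ll q^{1/2-\delta/10}$, this is no better than the trivial estimate, so the Kloosterman-type cancellation you invoke does not materialize. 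Thus the key step of your plan fails.

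The paper's proof avoids exactly this dead end by not interchanging summation and bounding the $h$-sum directly. Instead, it first uses Weyl differencing together with lattice-point counting (Lemma~\ref{lem:shortpoint}) to produce integers $\ell,s$ with $a\equiv\ell\overline{s}\pmod q$, $\ell$ and $|s|$ small as in \eqref{eq:ls1}. It then uses reciprocity for modular inverses, $\overline{\ell m}/d+\overline{d}/(\ell m)\equiv 1/(\ell m d)\pmod 1$, to replace the Gauss-sum phase $e_d(-\overline{bm}\,n^2)$ by a Gauss-sum phase with the much smaller modulus $\ell m$, at the cost of a slowly varying Archimedean correction. A \emph{second} Poisson summation is then applied in the dual variable, and the range restriction $N\le q^{1/2-\varepsilon}$ (through \eqref{eq:qN3lm}) guarantees that only one Fourier mode survives, collapsing the inner sum and producing the recursive inequality \eqref{eq:S2finalH} on cubic sums with reduced modulus. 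Iterating this recursion is what yields the power saving. Your plan has no analogue of the lattice reduction producing $\ell,s$, no reciprocity step, and no second dual Poisson; these are the ingredients that distinguish the paper's argument from the known P\'olya--Vinogradov barrier.
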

Note that~\eqref{eq:weyl} implies 
\begin{align*}
\sum_{1\le n \le N}e\left(\frac{an^3}{q}+\gamma n \right)\ll (qN)^{1/4+o(1)} \quad \text{provided} \quad q^{1/3}\le N \le q^{1/2},
\end{align*}
hence we obtain a power saving over Weyl differencing in this range of parameters.

We are concerned with bounding the sums~\eqref{eq:cb1} for individual values of $q$. There are also a number of interesting results and questions concerning their average or typical behaviour. Wooley's~\cite{Wooley} work on the cubic case of Vinogradov's mean value theorem implies sharp bounds for moments of sums over cubic polynomials, see also~\cite{Wooley1}. More general restriction estimates for cubic sums were initiated by Bourgain~\cite{Bou} and further developed in~\cite{Hu,HuW,Lai}. See~\cite{CS,CS1} for various bounds related to the metric behaviour of large values of Weyl sums and~\cite{BPPSV} for other metric results and an asymptotic formula for minor arc sums.

\section{Outline of argument}
The details in proving Theorem~\ref{thm:main1} are sometimes tedious so we first outline our argument using some heuristics to simplify things. For the purpose of this discussion we assume a heuristic form of Poission summation, that for any $q$-periodic function $f$ 
\begin{align}
\label{eq:heuristic1}
\sum_{n \le N}f(n)\sim \frac{N}{q}\sum_{n\le q/N}\widehat f(n).
\end{align}
where $\widehat f$ is the Fourier transform 
\begin{align*}
\widehat f(n)=\sum_{y=1}^{q}f(y)e_q(-yn),
\end{align*} 
and $e_q(x)=e^{2\pi i x/q}.$
Let $G(a,b;q)$ denote the Gauss sum
\begin{align*}
G(a,b;q)=\sum_{y=1}^{q}e_q(ay^2+by).
\end{align*}
Assuming $q$ is odd, we may complete the square to obtain 
\begin{align*}
G(a,b;q)=e_q\left(-\overline{4a}b^2\right)G(a,0;q).
\end{align*}
The sums $G(a,0;q)$ have an explicit evaluation in terms of the Jacobi symbol. To state such results requires considering a case by case basis depending on the value of $q$ mod $4$. We will ignore issues with even $q$ and for simplicity  assume for any integers $a,b,q$ that
\begin{align}
\label{eq:heuristic2}
G(a,b;q)\sim \left(\frac{a}{q}\right)e_q\left(-\overline{4a}b^2\right)q^{1/2},
\end{align}
where 
$$\left(\frac{.}{q}\right),$$
denotes the Jacobi symbol mod $q$.

 Let $N$ satisfy 
\begin{align}
\label{eq:heuristicN}
q^{1/3+\varepsilon}\le N \le q^{1/2-\varepsilon},
\end{align}
and consider the sums
\begin{align}
\label{eq:heuristicS}
S=\sum_{1\le n \le N}e_q\left(an^3\right).
\end{align}
In this setting one may use Weyl differencing to show 
\begin{align}
\label{eq:S4141414141}
|S|^4\ll N^{2+o(1)}\left|\left\{ |m|\ll N^2, \quad |n| \ll \frac{q}{N} \ : \ am\equiv n \mod{q}  \right\}\right|.
\end{align}
If
\begin{align}
\label{eq:Subheuristic}
S\gg \delta (qN)^{1/4+o(1)},
\end{align}
for some  $\delta \sim q^{-\varepsilon_0}$
 then
\begin{align}
\label{eq:latticeSMH}
\left|\left\{ |m|\ll N^2, \quad |n| \ll \frac{q}{N} \ : \ am\equiv n \mod{q}  \right\}\right|\gg \delta^4\frac{q}{N}.
\end{align}
Using lattice basis reduction we obtain integers $\ell,s$ satisfying 
\begin{align}
\label{eq:ls1}
a\equiv \ell \overline{s} \mod{q}, \quad \ell  \ll \frac{1}{\delta^4}, \quad |s|\ll \frac{N^3}{q \delta^4}.
\end{align}
Returning to the sums~\eqref{eq:heuristicS}, we apply the first step of Weyl differencing to get 
\begin{align*}
|S|^2=\sum_{m,n\le N}e_q(a(m^3-n^3))&=\sum_{|m|\le N}e_q(am^3)\sum_{n\le N}e_q(3am n^2+3am^2n) \\
&\ll S_1+S_2,
\end{align*}
where 
\begin{align*}
S_1=\sum_{|m|\le N^{1-\varepsilon}}\left|\sum_{n\le N}e_q(3am n^2+3am^2n)\right|,
\end{align*}
and
\begin{align}
\label{eq:S2heur}
S_2=\sum_{N^{1-\varepsilon}<|m|\le N}e_q(am^3)\sum_{n\le N}e_q(3am n^2+3am^2n).
\end{align}
If $|S|^2\ll S_1$ then we continue with Weyl differencing and obtain an immediate improvement on~\eqref{eq:weyl} due to the shorter outer summation. Hence we may suppose 
\begin{align}
\label{eq:SS12h}
|S|^2\ll S_2.
\end{align}
By~\eqref{eq:heuristic1} and~\eqref{eq:heuristic2}
\begin{align}
\label{eq:heuristicS2}
\nonumber S_2&\sim \frac{N}{q}\sum_{N^{1-\varepsilon}<|m|\le N}e_q(am^3)\sum_{n\le q/N}G(3am,3am^2-n;q) \\
&\sim \frac{N}{q^{1/2}}\sum_{N^{1-\varepsilon}<|m|\le N}\left(\frac{m}{q}\right)e_q(\overline{4}am^3)\sum_{n\le q/N}e_q\left(-\overline{12am }n^2+\overline{2}mn\right).
\end{align}
Up to this point our analysis of $S_2$ is similar to ideas used by Enflo~\cite{Enflo} and note the sums~\eqref{eq:pv123} arise after interchanging summation. Our new input is to use~\eqref{eq:ls1} and reciprocity for modular inverses to reduce oscillations for summation over $n$ in~\eqref{eq:heuristicS2}.  We have 
\begin{align*}
\frac{\overline{12am }n^2}{q}\equiv \frac{s\overline{12\ell m }n^2}{q}\equiv -\frac{s\overline{q}n^2}{12\ell m}+\frac{s n^2}{12 \ell m q} \mod{1},
\end{align*}
and 
\begin{align*}
\frac{\overline{2}mn}{q}\equiv -\frac{qmn}{2}+\frac{mn}{2q} \mod{1}.
\end{align*}
Substituting into~\eqref{eq:heuristicS2} gives
\begin{align}
\label{eq:S2b1heuristic}
& \nonumber S_2\sim \frac{N}{q^{1/2}}\sum_{N^{1-\varepsilon}<|m|\le N}\left(\frac{m}{q}\right)e_q(\overline{4}am^3) \\ & \quad \quad \quad \quad \quad \quad \quad \times \sum_{n\le q/N}e_{12\ell m}\left(s\overline{q }n^2\right)e\left(-\frac{sn^2}{12 \ell m q}+\frac{mn}{2q}-\frac{qmn}{2}\right).
\end{align}
If $m,n$ satisfy the conditions of summation in~\eqref{eq:S2b1heuristic} then from~\eqref{eq:ls1}
\begin{align*}
-\frac{sn^2}{12 \ell m q}+\frac{mn}{2q}\ll \frac{N^{\varepsilon}}{\delta^4}.
\end{align*}
Using some Fourier analysis, this implies 
\begin{align}
\label{eq:S2b1heuristic2}
S_2\ll \frac{N^{1+\varepsilon}}{\delta^4q^{1/2}}\sum_{N^{1-\varepsilon}<|m|\le N}\left(\frac{m}{q}\right)e_q(\overline{4}am^3)\sum_{n\le q/N}e_{12\ell m}\left(s\overline{q }n^2\right),
\end{align}
and we have ignored the term $qmn/2$ which may be dealt with by partitioning summation depending on parity. If $N$ satisfies~\eqref{eq:heuristicN} then by~\eqref{eq:ls1} 
\begin{align}
\label{eq:qN3lm}
\frac{q}{N}\gg \ell m, 
\end{align} 
provided $\delta$ is not too small. We see that summation over $n$ in~\eqref{eq:S2b1heuristic2} is essentially a complete sum
\begin{align*}
\sum_{n\le q/N}e_{12\ell m}\left(s\overline{q}n^2\right)\sim  \frac{q}{12\ell m N}G(s\overline{q},0;12\ell m).
\end{align*}
Substituting the above into~\eqref{eq:S2b1heuristic2} and using~\eqref{eq:heuristic2} 
\begin{align*}
S_2\ll \frac{q^{1/2}N^{\varepsilon}}{\delta^4}\sum_{N^{1-\varepsilon}<|m|\le N}\left(\frac{m}{q}\right)\left(\frac{q}{m}\right)\frac{e_q(\overline{4}am^3)}{m^{1/2}}.
\end{align*}
By quadratic reciprocity and partial summation this simplifies to 
\begin{align}
\label{eq:S2finalH}
S_2\ll \frac{q^{1/2}N^{2\varepsilon}}{\delta^4N^{1/2}}\sum_{m\le N}e_q(\overline{4}am^3).
\end{align}
We arrive at a kind of recursive inequality for cubic sums. Chossing $a$ such that the sum~\eqref{eq:heuristicS} is maximum we obtain from~\eqref{eq:SS12h}
\begin{align*}
|S|\ll \frac{q^{1/2}N^{2\varepsilon}}{\delta^4N^{1/2}},
\end{align*} 
and hence by~\eqref{eq:Subheuristic}
\begin{align*}
\delta \ll N^{2\varepsilon/5}\left(\frac{q}{N^{3}}\right)^{1/20},
\end{align*}
which gives a power saving over~\eqref{eq:weyl} provided $N\ge q^{1/3+3\varepsilon}$.

Our argument presented below is much more technical. One of the major issues we have ignored is that for an arbitrary modulus we need to partition summation in~\eqref{eq:S2heur} depending on the value of $(m,q)$. This modifies the length  and modulus of summation in a way which makes it difficult to take full advantage of the recursive inequality given by~\eqref{eq:S2finalH}. It is possible Theorem~\ref{thm:main1} may be improved for certain $q$ such as prime although we have not attempted to do so here.

The reader may be curious to what extent the argument from this paper may be extended to cover a range of parameters of the form
$$q^{1/2-\varepsilon}\le N \le q^{1/2+\varepsilon_1}.$$
The restriction to $N\le q^{1/2-\varepsilon}$ is used in two places. First  it allows us to ignore a term $N^3$ on the right hand side of~\eqref{eq:S4141414141}  which contributes $N^{3/4}$ in~\eqref{eq:weyl}. Second at~\eqref{eq:qN3lm} which implies that only the zero Fourier coefficient contributes after applying Poission summation to~\eqref{eq:S2b1heuristic2}. It would be interesting to see if one could use $\ell,s$ as in~\eqref{eq:ls1} obtained only from Dirichlet's theorem and a more careful application of Poission summation and stationary phase at~\eqref{eq:S2b1heuristic2} to improve the Weyl differencing bound for a longer range of parameters.

\section{Background on Fourier analysis}
We first collect some well known results and techniques from Fourier analysis. We will work with both the Fourier transform over $\R$ and the group of residues modulo an integer $q$. We use $\widehat f$ to denote the Fourier transform  of a function $f$ over the space where it is defined. In particular, for 
$$f:\R \rightarrow \C,$$
we have
$$\widehat f(\eta)=\int_{-\infty}^{\infty}f(x)e(-x\eta)dx,$$
and for
$$g:\Z/q\Z \rightarrow \C,$$
we have  
$$\widehat g(\eta)=\frac{1}{q}\sum_{x=1}^{q}g(x)e_q(-x\eta).$$
In this context, the  Poission summation formula states:
\begin{lemma}
\label{lem:ps}
For $f\in L^{1}(\R)$ smooth and $g$ as above, we have 
\begin{align*}
\sum_{n\in \Z}f(n)g(n)=\sum_{m\in \Z}\widehat f\left(\frac{m}{q}\right)\widehat g(-m).
\end{align*}
\end{lemma}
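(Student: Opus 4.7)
The plan is to reduce the statement to the classical Poisson summation formula on $\R$ by grouping the integers $n \in \Z$ according to their residue mod $q$. Write
\begin{align*}
\sum_{n \in \Z} f(n) g(n) = \sum_{r=1}^{q} g(r) \sum_{k \in \Z} f(qk + r),
\end{align*}
where we have used that $g$ is defined on $\Z/q\Z$, so $g(n)$ depends only on the residue of $n$ mod $q$.

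Next I would apply the classical Poisson summation formula to the inner sum. Setting $F_r(x) = f(qx + r)$, a direct change of variable gives
\begin{align*}
\widehat{F_r}(m) = \int_{-\infty}^{\infty} f(qx + r) e(-xm)\, dx = \frac{1}{q} e_q(rm)\, \widehat f\!\left(\frac{m}{q}\right).
\end{align*}
Since $f$ is smooth and integrable, $F_r$ satisfies the hypotheses of the standard Poisson summation formula on $\R$, which yields
\begin{align*}
\sum_{k \in \Z} f(qk + r) = \sum_{m \in \Z} \widehat{F_r}(m) = \frac{1}{q} \sum_{m \in \Z} e_q(rm)\, \widehat f\!\left(\frac{m}{q}\right).
\end{align*}

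Substituting back and interchanging the two sums (justified by absolute convergence coming from the smoothness and decay of $f$, and hence rapid decay of $\widehat f$) gives
\begin{align*}
\sum_{n \in \Z} f(n) g(n) = \sum_{m \in \Z} \widehat f\!\left(\frac{m}{q}\right) \cdot \frac{1}{q} \sum_{r=1}^{q} g(r) e_q(rm) = \sum_{m \in \Z} \widehat f\!\left(\frac{m}{q}\right) \widehat g(-m),
\end{align*}
using the definition of $\widehat g$ given in the excerpt. The only real subtlety is the interchange of summation, but this is routine because the decay of $\widehat f$ at infinity (standard under the smoothness assumption on $f$) makes the double sum absolutely convergent; the main ingredient being invoked is the classical Poisson summation on $\R$, and everything else is bookkeeping.
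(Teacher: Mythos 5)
The paper states Lemma~\ref{lem:ps} without proof, so there is no in-paper argument to compare against; your proof is the standard one, and it is correct. Splitting $n$ into residue classes mod $q$, applying the classical Poisson summation formula to each shifted-and-dilated copy $F_r(x)=f(qx+r)$, and then recognizing the complete exponential sum as $\widehat g(-m)$ is exactly the right reduction, and your computation of $\widehat{F_r}(m)=\frac{1}{q}e_q(rm)\widehat f(m/q)$ is accurate.
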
 
The convolution of two functions $f_1,f_2$ is given by 
\begin{align*}
(f_1*f_2)(x)=\int_{\R}f_1(y)f_2(x-y)dy.
\end{align*}
\begin{lemma}
\label{lem:conv}
For $f_1,f_2\in L^{1}(\R)$ smooth we have  
\begin{align*}
\widehat{(f_1*f_2)}(x)=\widehat f_1(x)\widehat f_2(x),
\end{align*}
and 
\begin{align*}
\widehat{f_1f_2}(x)=(\widehat{f_1}*\widehat{f_2})(x).
\end{align*}
\end{lemma}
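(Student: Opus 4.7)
The plan is to prove both identities by direct computation from the definitions, using Fubini's theorem to interchange orders of integration. Both formulas are essentially formal manipulations, and the smoothness and integrability assumptions on $f_1,f_2$ are what allow the Fubini step to be rigorous.

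For the first identity, I would start with the definition of the Fourier transform applied to $f_1*f_2$, giving
\begin{align*}
\widehat{(f_1*f_2)}(\xi)=\int_{\R}\int_{\R}f_1(y)f_2(x-y)\,dy\,e(-x\xi)\,dx.
\end{align*}
Since $f_1,f_2\in L^1(\R)$, the integrand is absolutely integrable on $\R^2$, so Fubini applies and I may swap the order of integration. After the swap I would make the substitution $u=x-y$ inside the inner integral, which decouples the $y$ and $u$ variables since $e(-x\xi)=e(-y\xi)e(-u\xi)$, and the resulting double integral factors as $\widehat{f_1}(\xi)\widehat{f_2}(\xi)$.

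For the second identity, the cleanest route is to invoke Fourier inversion: writing $\widehat{f_1}*\widehat{f_2}=\widehat{g}$ for some $g$ and applying the first identity (with roles of the function and its transform interchanged) to identify $g$ with $f_1 f_2$. Alternatively one can imitate the direct computation above, expanding the convolution and applying Fubini, and then recognising the inner integral as the Fourier transform of a translate. The main obstacle, if any, is purely analytic: justifying the interchange of integrals and the Fourier inversion step rigorously from the smoothness hypothesis. Since the statement of the lemma already restricts to smooth $L^1$ functions, this is unproblematic and the proof is a short formal computation.
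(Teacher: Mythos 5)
Your proposal is correct and is the standard textbook argument. The paper states Lemma~\ref{lem:conv} without proof, treating it as a well-known background fact from Fourier analysis, so there is no proof in the paper to compare against; your Fubini-plus-substitution computation for the first identity and Fourier inversion (or the analogous direct computation) for the second is exactly the expected argument.
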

We will also use the Fourier inversion formula.
\begin{lemma}
\label{lem:fourierinversion}
For $f,\widehat f\in L^{1}(\R)$ smooth we have  
\begin{align*}
f(x)=\int_{\R}\widehat f(y)e(xy)dy.
\end{align*}
\end{lemma}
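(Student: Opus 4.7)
The plan is to prove Fourier inversion by the classical Gaussian regularization argument. I would introduce the regularized integral
$$I_\epsilon(x)=\int_{\R}\widehat f(y)\,e(xy)\,e^{-\epsilon\pi y^2}\,dy$$
for small $\epsilon>0$, and compute its limit as $\epsilon\to 0^+$ in two different ways. The hypothesis $\widehat f\in L^1(\R)$ together with the dominated convergence theorem immediately gives
$$\lim_{\epsilon\to 0^+}I_\epsilon(x)=\int_{\R}\widehat f(y)\,e(xy)\,dy,$$
which is the right-hand side of the desired identity.

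For the other evaluation of $I_\epsilon(x)$, I would unfold the definition $\widehat f(y)=\int_\R f(u)e(-uy)\,du$ and apply Fubini's theorem to swap the order of integration. This is justified because $f\in L^1(\R)$ and the Gaussian factor $e^{-\epsilon\pi y^2}$ provides integrability in $y$. The inner integral becomes a shifted Gaussian Fourier transform,
$$\int_{\R}e((x-u)y)\,e^{-\epsilon\pi y^2}\,dy=\epsilon^{-1/2}e^{-\pi(x-u)^2/\epsilon},$$
using the standard computation that $e^{-\pi t^2}$ is its own Fourier transform. Hence $I_\epsilon(x)=(f*\phi_\epsilon)(x)$, where $\phi_\epsilon(u)=\epsilon^{-1/2}e^{-\pi u^2/\epsilon}$ is a nonnegative function integrating to $1$ and concentrating at the origin as $\epsilon\to 0^+$, i.e.\ a standard approximation to the identity.

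Since $f$ is smooth (in particular continuous) and integrable, $(f*\phi_\epsilon)(x)\to f(x)$ pointwise as $\epsilon\to 0^+$ by the usual approximation-to-the-identity lemma (split the convolution integral into a neighborhood of $x$, where continuity of $f$ controls the integrand, and its complement, where the mass of $\phi_\epsilon$ vanishes). Equating the two evaluations of $\lim_{\epsilon\to 0^+}I_\epsilon(x)$ yields $f(x)=\int_\R\widehat f(y)e(xy)\,dy$.

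The only substantive ingredient is the self-duality of the Gaussian under the Fourier transform, which one proves separately by a contour shift or by verifying that $\widehat{e^{-\pi t^2}}$ satisfies the same first-order ODE and normalization as $e^{-\pi t^2}$; everything else is dominated convergence, Fubini, and the standard mollifier argument, so I do not expect any real obstacle here.
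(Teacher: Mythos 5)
The paper states Lemma~\ref{lem:fourierinversion} as a standard background fact from Fourier analysis and gives no proof at all, so there is no argument in the paper to compare against. Your Gaussian-regularization proof is correct and complete: you introduce $I_\epsilon(x)=\int_{\R}\widehat f(y)e(xy)e^{-\epsilon\pi y^2}\,dy$, use dominated convergence (justified by $\widehat f\in L^1$) to identify $\lim_{\epsilon\to 0^+}I_\epsilon(x)$ with the right-hand side, and separately use Fubini (justified by $f\in L^1$ and the Gaussian damping) together with the self-duality of the Gaussian to rewrite $I_\epsilon$ as a convolution of $f$ against the approximate identity $\phi_\epsilon(u)=\epsilon^{-1/2}e^{-\pi u^2/\epsilon}$; continuity of $f$ then gives $I_\epsilon(x)\to f(x)$. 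This is the classical mollifier proof of Fourier inversion, and all the supporting steps (Fubini, dominated convergence, $\int\phi_\epsilon=1$, and the Gaussian Fourier transform) are invoked under hypotheses that hold here, so the argument is sound. The only small remark is that the paper's stated hypotheses ($f,\widehat f\in L^1$ and smooth) are slightly stronger than what your proof needs, which requires only $f,\widehat f\in L^1$ with $f$ continuous at $x$; that discrepancy is harmless since the paper is just recording the statement in a convenient form for later use.
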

\begin{lemma}
\label{lem:partialsummation}
Let $N$ be an integer and $F,f$ complex valued functions on the interval $[1,N]$ with $f$ continuously differentiable and satisfying 
\begin{align}
\label{eq:fcondspartial}
f(x)\ll 1, \quad \text{and} \quad f'(x)\ll \frac{\Omega}{n}, \quad x\in [1,N],
\end{align}
for some $\Omega>0$. There exists $\gamma \in [0,1)$ such that 
\begin{align*}
\sum_{1\le n \le N}f(n)F(n)\ll (1+\Omega )N^{o(1)}\sum_{1\le n \le N}F(n)e(\gamma n).
\end{align*}
\end{lemma}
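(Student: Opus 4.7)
The plan is to combine classical Abel summation with a Fourier-inversion argument on the torus, reducing the weighted sum to a single pure-frequency sum. Writing $S(t) = \sum_{n \le t} F(n)$, partial summation gives
\[
\sum_{n=1}^N f(n) F(n) = f(N) S(N) - \int_1^N f'(t) S(t)\, dt,
\]
and the hypotheses $|f| \ll 1$, $|f'(x)| \ll \Omega/x$ imply that the $L^1$ weight of $(f(N), f')$ on $[1,N]$ is $\ll 1 + \Omega \log N$.

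Next I would express each $S(t)$ in terms of the target exponential sum $G(\alpha) = \sum_{n=1}^N F(n) e(\alpha n)$. By Fourier inversion on $\R/\Z$ applied to $F$ regarded as a finitely supported sequence, we have $F(m) = \int_0^1 G(\alpha) e(-m\alpha)\, d\alpha$; summing over $1 \le m \le \lfloor t \rfloor$ and interchanging,
\[
S(t) = \int_0^1 G(\alpha) \sum_{m=1}^{\lfloor t \rfloor} e(-m\alpha)\, d\alpha.
\]
The standard Dirichlet kernel bound $\bigl|\sum_{m \le t} e(-m\alpha)\bigr| \ll \min(t, 1/\|\alpha\|)$ combined with $\int_0^1 \min(t, 1/\|\alpha\|)\, d\alpha \ll \log(2+t) \ll \log N$ then yields
\[
|S(t)| \ll (\log N)\, \sup_{\alpha \in [0,1)} |G(\alpha)|.
\]
Pick $\gamma \in [0,1)$ attaining the supremum, which exists by continuity and $1$-periodicity of $G$.

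Inserting this into the Abel-summation identity gives
\[
\Bigl|\sum_{n=1}^N f(n) F(n)\Bigr| \ll (1+\Omega)(\log N)^2 |G(\gamma)|,
\]
and the factor $(\log N)^2 = N^{o(1)}$ is absorbed into the error term, completing the proof. I do not foresee any serious obstacle here: the lemma is essentially a packaging of partial summation together with the classical Fourier completion trick on $\R/\Z$. The only point that requires a bit of care is that the Dirichlet-kernel integral loses only a logarithmic factor, so that both the integration-by-parts step and the completion step fit comfortably inside the $N^{o(1)}$ slack permitted by the statement.
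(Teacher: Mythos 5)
Your proof is correct and takes essentially the same route as the paper: Abel summation to reduce to partial sums $S(t)$, then the orthogonality (completion) identity $\sum_{m\le t}F(m)=\int_0^1 G(\alpha)\sum_{m\le t}e(-m\alpha)\,d\alpha$ with the Dirichlet kernel bound $\int_0^1\min(t,\|\alpha\|^{-1})\,d\alpha\ll\log N$, and finally choosing $\gamma$ where $|G|$ is maximal. The only cosmetic difference is that you phrase the completion step via Fourier inversion of the sequence $F$, whereas the paper inserts the orthogonality kernel directly into the double sum; the computations are identical.
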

\begin{proof}
By partial summation and~\eqref{eq:fcondspartial}
\begin{align}
\label{eq:ps1}
\nonumber \sum_{1\le n \le N}f(n)F(n)&\ll \sum_{1\le n \le N}F(n)+\int_{1}^{N}f'(x)\left(\sum_{1\le  n \le x}F(n)\right)dx \\
&\ll \sum_{1\le n \le N}F(n)+\Omega N^{o(1)} \sum_{1\le n \le x}F(n),
\end{align}
for some $1\le x \le N$. We have 
\begin{align*}
\sum_{1\le n \le x}F(n)&=\int_{0}^{1}\sum_{1\le n \le N}F(n)\left(\sum_{1\le m \le x}e(\alpha(n-m)) \right)d\alpha \\
&\ll \int_{0}^{1}\left|\sum_{1\le m \le x}e(-\alpha m)\right|\left|\sum_{1\le n \le N}F(n)e(\alpha n)\right|d\alpha \\
& \ll \max_{\gamma \in [0,1)}\left|\sum_{1\le n \le N}F(n)e(\gamma n)\right|\int_{0}^{1}\left|\sum_{1\le m \le x}e(-\alpha m)\right|d\alpha,
\end{align*}
and the result follows from~\eqref{eq:ps1} after using 
\begin{align*}
\int_{0}^{1}\left|\sum_{1\le m \le x}e(-\alpha m)\right|d\alpha \ll \int_{0}^{1}\min\left\{N,\frac{1}{\|\alpha\|}\right\}d\alpha \ll N^{o(1)}.
\end{align*}
\end{proof} 
\begin{lemma}
\label{lem:apcomp}
Let $\ell, p,N$ be positive integers with $\ell,p\ll N^{O(1)}$. For any complex valued function $F$ on the interval $[1,N]$ there exists some $\gamma \in [0,1)$ such that 
\begin{align*}
\sum_{\substack{1\le \ell n+p\le N}}F(\ell n+p)\ll N^{o(1)}\sum_{1\le n \le N}F(n)e(\gamma n).
\end{align*}
\end{lemma}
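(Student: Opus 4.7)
The plan is to reduce the arithmetic progression $\{\ell n+p\}$ to a complete interval by detecting the congruence condition with additive characters, in the same spirit as how Lemma~\ref{lem:partialsummation} reduces to a single frequency $\gamma$. Substituting $m=\ell n+p$, the left hand side is
\begin{align*}
\sum_{1\le \ell n+p\le N}F(\ell n+p)=\sum_{\substack{1\le m\le N\\ m\equiv p\,(\bmod\,\ell)}}F(m),
\end{align*}
and the indicator of the congruence class may be written using the standard orthogonality relation
\begin{align*}
\mathbf{1}\bigl[m\equiv p\pmod{\ell}\bigr]=\frac{1}{\ell}\sum_{k=0}^{\ell-1}e_\ell\bigl(k(m-p)\bigr).
\end{align*}

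Inserting this and interchanging summation yields
\begin{align*}
\sum_{1\le \ell n+p\le N}F(\ell n+p)=\frac{1}{\ell}\sum_{k=0}^{\ell-1}e_\ell(-kp)\sum_{m=1}^{N}F(m)e\!\left(\frac{k}{\ell}m\right),
\end{align*}
so that by the triangle inequality the absolute value of the left hand side is at most
\begin{align*}
\max_{0\le k<\ell}\left|\sum_{m=1}^{N}F(m)e\!\left(\frac{k}{\ell}m\right)\right|.
\end{align*}
Taking $\gamma=k^\ast/\ell\in[0,1)$ for a value $k^\ast$ attaining this maximum gives the required bound; the factor $N^{o(1)}$ stated in the lemma is in fact not needed and merely provides slack to absorb any implied constants.

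The argument is essentially free of obstacles, since the only device required is discrete orthogonality together with a single triangle inequality. The hypothesis $\ell\ll N^{O(1)}$ is used only to ensure that the number of characters appearing is polynomial in $N$, which is harmless after passing to the maximum; in the boundary case $\ell>N$ the left hand side has at most one nonzero term and the conclusion is immediate.
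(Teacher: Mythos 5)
Your proof is correct, and it takes a genuinely different route from the paper's. The paper proves this by the standard completion-via-integration trick: it writes the restricted sum as $\int_0^1 \sum_{1\le n\le N}F(n)\bigl(\sum_{1\le \ell m+p\le N}e(\alpha(\ell m+p-n))\bigr)\,d\alpha$, pulls the maximum of $\bigl|\sum_n F(n)e(\alpha n)\bigr|$ out, and then bounds the remaining $L^1$-norm of the geometric-kernel sum by $N^{o(1)}$ --- this is the same mechanism used in Lemma~\ref{lem:partialsummation} and Lemma~\ref{lem:intervalcompletion}. You instead detect the progression $m\equiv p\pmod\ell$ by discrete orthogonality over $\ell$ additive characters, which collapses the continuous integral to a finite average; after the triangle inequality the prefactor is exactly $1$ rather than $N^{o(1)}$. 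Your version is more elementary and slightly sharper, and the observation in your last paragraph (that the hypothesis $\ell\ll N^{O(1)}$ is not even needed once you pass to the maximum, and that the case $\ell>N$ is trivial) is correct. The only caveat is that the statement --- and your conclusion --- should be read with $|\cdot|$ around both sides, as the paper does implicitly throughout; your final line should say the left side is bounded in modulus by $\max_{0\le k<\ell}\bigl|\sum_m F(m)e(km/\ell)\bigr|$, which is what you in fact prove.
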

\begin{proof}
We have 
\begin{align*}
\sum_{\substack{1\le \ell n+p\le N}}F(\ell n+p)&=\int_{0}^{1}\sum_{1\le n \le N}F(n)\left(\sum_{1\le \ell m +p \le N}e(\alpha(\ell m+p-n)) \right) \\
&\ll \int_{0}^{1}\left|\sum_{(1-p)/\ell\le m \le (N-p)/\ell}e(\alpha \ell m) \right|\left|\sum_{1\le n \le N}F(n)e(\alpha n) \right|d\alpha,
\end{align*}
and hence for some $\gamma \in [0,1)$
\begin{align*}
&\sum_{\substack{1\le \ell n+p\le N}}F(\ell n+p)\ll \left|\sum_{1\le n \le N}F(n)e(\gamma n) \right| \\ & \quad \quad \quad \quad \quad \quad \quad \quad \quad \times \int_{0}^{1}\left|\sum_{(1-p)/\ell\le m \le (N-p)/\ell}e(\alpha \ell m) \right|d\alpha,
\end{align*}
and the result follows after using 
\begin{align*}
\int_{0}^{1}\left|\sum_{(1-p)/\ell\le m \le (N-p)/\ell}e(\alpha \ell m) \right|d\alpha&\ll \frac{1}{\ell}\int_{0}^{\ell}\left|\sum_{(1-p)/\ell\le m \le (N-p)/\ell}e(\alpha m) \right|d\alpha \\ &  \ll N^{o(1)}.
\end{align*}
\end{proof}
Using a similar completion argument as above, it is possible to show the following, see~\cite[Lemma~2.2]{BI}.
\begin{lemma}
\label{lem:intervalcompletion}
Let $N,N_1,M,M_1$ be integers satisfying 
$$N_1\le M_1 \le M\le N.$$
For any complex valued function $F$ on the interval $[N_1,N]$, there exists some $\gamma \in \R$ such that 
\begin{align*}
\sum_{M_1\le n \le M}F(n)\ll N^{o(1)}\sum_{N_1\le n \le N}F(n)e(\gamma n).
\end{align*}
\end{lemma}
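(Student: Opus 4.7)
The plan is to mirror the completion arguments already employed in the proofs of Lemma~\ref{lem:partialsummation} and Lemma~\ref{lem:apcomp}. The idea is to detect the truncation $M_1 \le n \le M$ by Fourier expansion, converting the incomplete sum over the shorter interval into a weighted complete sum over $[N_1,N]$ with an additive phase that we are free to choose.

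First, I would use the orthogonality identity, valid for any integer $n$,
\begin{align*}
\mathbf{1}_{[M_1,M]}(n) = \int_0^1 \sum_{M_1 \le m \le M} e(\alpha(m-n))\, d\alpha.
\end{align*}
Multiplying by $F(n)$, summing over $N_1 \le n \le N$, and interchanging the order of integration and summation yields
\begin{align*}
\sum_{M_1 \le n \le M} F(n) = \int_0^1 \left(\sum_{M_1 \le m \le M} e(\alpha m)\right)\left(\sum_{N_1 \le n \le N} F(n) e(-\alpha n)\right) d\alpha.
\end{align*}
Taking absolute values, bounding the second factor by its maximum over $\alpha \in [0,1)$, and letting $\gamma \in \R$ be a real number attaining that maximum, one obtains
\begin{align*}
\left|\sum_{M_1 \le n \le M} F(n)\right| \ll \left|\sum_{N_1 \le n \le N} F(n) e(\gamma n)\right| \int_0^1 \left|\sum_{M_1 \le m \le M} e(\alpha m)\right| d\alpha.
\end{align*}

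It then remains to verify that the geometric-sum integral contributes only $N^{o(1)}$. This follows from the elementary bound $\left|\sum_{M_1 \le m \le M} e(\alpha m)\right| \ll \min\{N, 1/\|\alpha\|\}$ combined with the standard evaluation $\int_0^1 \min\{N, 1/\|\alpha\|\}\, d\alpha \ll \log N \ll N^{o(1)}$, exactly as in the closing lines of the proof of Lemma~\ref{lem:partialsummation}. Since every step is a direct adaptation of arguments already appearing in the excerpt, I do not anticipate any real technical obstacle; the proof is essentially mechanical once the Fourier detection identity is written down.
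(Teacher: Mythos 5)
Your proof is correct and is exactly the completion argument the paper has in mind: the paper does not spell out a proof but simply cites~\cite[Lemma~2.2]{BI} and remarks that the result follows from ``a similar completion argument as above,'' i.e.\ the same Fourier-detection of the subinterval used in Lemma~\ref{lem:partialsummation} and Lemma~\ref{lem:apcomp}, which is precisely what you carry out. No gaps; this matches the intended argument.
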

The following is known as a smooth partition of unity, see~\cite[Lemma~2]{Fou1}.
\begin{lemma}
\label{lem:smoothpart}
There exists a sequence $V_{0},V_1,\dots$ of smooth functions satisfying 
\begin{align}
\label{eq:smoothpart1}
\text{supp}(V_{\ell})\subseteq (2^{\ell-1},2^{\ell}],
\end{align}
\begin{align}
\label{eq:smoothpart2}
V_{\ell}^{(k)}(x)\ll \frac{1}{x^k}, \quad k\ge 1,
\end{align}
and for any $x\ge 1$ we have 
\begin{align}
\label{eq:smoothpart3}
\sum_{\ell\ge 0}V_{\ell}(x)=1.
\end{align}
\end{lemma}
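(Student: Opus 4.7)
The plan is to construct the partition by dyadic rescaling of a single fixed smooth bump, along the lines of the standard Littlewood--Paley construction. I would first fix a monotone smooth $\phi:\R\to[0,1]$ with $\phi(x)=0$ for $x\le 1/2$ and $\phi(x)=1$ for $x\ge 1$; such a $\phi$ is classical, obtained for example by integrating and normalising a compactly supported mollifier. Setting $\psi(x)=\phi(x)-\phi(x/2)$ yields a smooth nonnegative function supported in $[1/2,2]$, and the identity
\begin{align*}
\sum_{\ell\in\Z}\psi\!\left(\frac{x}{2^\ell}\right)=\sum_{\ell\in\Z}\left[\phi\!\left(\frac{x}{2^\ell}\right)-\phi\!\left(\frac{x}{2^{\ell+1}}\right)\right]
\end{align*}
telescopes to $1$ for every $x>0$, since $\phi(x/2^\ell)\to 1$ as $\ell\to-\infty$ and $\phi(x/2^\ell)\to 0$ as $\ell\to+\infty$.

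Next, I would define $V_\ell(x)=\psi(x/2^\ell)$ for $\ell\ge 1$ and absorb the finitely many nonpositive-index pieces (which contribute only for $x$ close to $1$) into a single smooth $V_0$ supported in $(0,2]$. Then \eqref{eq:smoothpart3} is immediate on $[1,\infty)$, and \eqref{eq:smoothpart1} holds in the natural dyadic sense that $V_\ell$ is supported in an interval of length $\asymp 2^\ell$ centred on $2^\ell$. The derivative bound \eqref{eq:smoothpart2} then follows from the chain rule, since $V_\ell^{(k)}(x)=2^{-\ell k}\psi^{(k)}(x/2^\ell)$ and on the support of $V_\ell$ one has $x\asymp 2^\ell$, so $2^{-\ell k}\ll x^{-k}$ with implied constant depending only on $\|\psi^{(k)}\|_\infty$.

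The only real subtlety is a bookkeeping one: a smooth function cannot literally be supported in the half-open interval $(2^{\ell-1},2^\ell]$ while simultaneously giving $\sum_\ell V_\ell\equiv 1$ on $[1,\infty)$, as this would force jumps across the dyadic boundaries. So \eqref{eq:smoothpart1} must be read as the standard dyadic annulus, which is the convention under which the construction above realises all three properties at once. The hardest part of the argument is effectively just choosing this compatible convention; everything else is routine, which is why the lemma is typically cited rather than reproved.
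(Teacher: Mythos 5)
Your construction is correct and is the standard Littlewood--Paley dyadic partition of unity; the paper does not actually prove Lemma~\ref{lem:smoothpart} itself but cites it to Fouvry~\cite{Fou1}, so there is no in-paper argument to compare against, and your telescoping construction is exactly the expected one. You are also right to flag the support condition~\eqref{eq:smoothpart1}: read literally, the half-open intervals $(2^{\ell-1},2^{\ell}]$ are pairwise disjoint, and no family of smooth compactly supported functions with pairwise disjoint supports can sum identically to $1$ on $[1,\infty)$ --- that would force each $V_\ell$ to be the discontinuous indicator function of its support. The intended (and Fouvry's actual) statement has $V_\ell$ supported in an overlapping dyadic annulus such as $[2^{\ell-1},2^{\ell+1}]$, which is exactly what your $V_\ell(x)=\psi(x/2^\ell)$ provides; this looser reading is all the paper ever uses in Lemma~\ref{lem:intervalsmooth} and beyond (only the dyadic scale and the derivative bounds~\eqref{eq:smoothpart2} enter, never the exact endpoints). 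One small tidying remark: a clean closed form for the absorbed piece is $V_0(x)=1-\phi(x/2)=\sum_{\ell\le 0}\psi(x/2^\ell)$, which is smooth, supported in $(0,2]$, and makes the whole family sum to $1$ for every $x>0$; restricting to $x\ge 1$ then gives~\eqref{eq:smoothpart3} directly.
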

Our next result follows by combining Lemma~\ref{lem:intervalcompletion}, Lemma~\ref{lem:smoothpart} and partitioning summation into dyadic intervals.
\begin{lemma}
\label{lem:intervalsmooth}
Let $F$ be a complex valued function on the integers. For any $N\gg 1$ there exists some $1\le K \le 2N,\gamma \in \R$ and smooth function $f$ satisfying 
\begin{align*}
\text{supp}(f)\subseteq [K,2K], \quad f^{(j)}(x)\ll \frac{1}{x^{j}},
\end{align*} 
such that 
\begin{align*}
\sum_{1\le n \le N}F(n)\ll N^{o(1)}\sum_{n\in \Z}f(n)F(n)e(\gamma n).
\end{align*}
\end{lemma}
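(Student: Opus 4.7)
Following the remark preceding the statement, the strategy is to run a smooth dyadic decomposition via Lemma~\ref{lem:smoothpart} and to handle the single truncated dyadic block with Lemma~\ref{lem:intervalcompletion}, at the cost of a $\log N = N^{o(1)}$ pigeonhole loss.

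Starting from the identity $\sum_{\ell \ge 0} V_\ell(x) = 1$ for $x \ge 1$, I would first write
\begin{align*}
\sum_{1 \le n \le N} F(n) = \sum_{\ell \ge 0} \sum_{1 \le n \le N} V_\ell(n) F(n).
\end{align*}
Since $\text{supp}(V_\ell) \subseteq (2^{\ell-1}, 2^\ell]$, only indices $0 \le \ell \le \log_2 N + O(1)$ can contribute, giving at most $O(\log N) = N^{o(1)}$ nonzero dyadic pieces. For each $\ell$ with $2^\ell \le N$ the truncation at $N$ is vacuous on the support of $V_\ell$, so the inner sum equals $\sum_{n \in \Z} V_\ell(n) F(n)$; the support and derivative bounds on $V_\ell$ from Lemma~\ref{lem:smoothpart} are exactly what is demanded of $f$ in the statement with $K = 2^{\ell-1}$, and here one may take $\gamma = 0$.

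The only delicate case is the unique boundary index $\ell^*$ with $2^{\ell^*-1} < N < 2^{\ell^*}$, where the truncation at $N$ genuinely cuts the dyadic block. Applying Lemma~\ref{lem:intervalcompletion} to the function $n \mapsto V_{\ell^*}(n) F(n)$ with inner range $[2^{\ell^*-1}+1, N]$ and outer range $[2^{\ell^*-1}+1, 2^{\ell^*}]$ produces some $\gamma \in \R$ with
\begin{align*}
\sum_{1 \le n \le N} V_{\ell^*}(n) F(n) \ll N^{o(1)} \sum_{n \in \Z} V_{\ell^*}(n) F(n) e(\gamma n),
\end{align*}
where the extension from the outer range to all of $\Z$ is free because $V_{\ell^*}$ is supported in $(2^{\ell^*-1}, 2^{\ell^*}]$.

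Finally, a pigeonhole over the $O(\log N)$ dyadic pieces picks out an $\ell_0$ whose contribution is at least a $1/(\log N) = N^{-o(1)}$ fraction of the total, so the claim holds with $K = 2^{\ell_0 - 1}$, $f = V_{\ell_0}$, and the corresponding $\gamma$. The whole argument is essentially bookkeeping once the partition of unity and the completion lemma are in place; the only mildly subtle step is the boundary dyadic block, which is precisely what Lemma~\ref{lem:intervalcompletion} was set up to handle, so I do not expect a genuine obstacle here.
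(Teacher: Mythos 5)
Your proof is correct and follows exactly the route the paper indicates in the sentence preceding the lemma: dyadic decomposition via the smooth partition of unity (Lemma~\ref{lem:smoothpart}), completion of the single truncated boundary block via Lemma~\ref{lem:intervalcompletion}, and a pigeonhole over the $O(\log N)$ dyadic pieces. The only cosmetic point worth noting is the degenerate $\ell=0$ block, where $K=2^{\ell-1}=1/2<1$; this is harmless (the block contributes only $n=1$) and can be absorbed by a trivial adjustment, so there is no genuine gap.
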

\section{Gauss sums}
\label{sec:gauss}
Given integers $a,\ell,q,$ define the complete Gauss sum 
\begin{align}
\label{eq:Gdef}
G(a,\ell;q)=\sum_{\mu=1}^{q}e_q(a\mu^2+\ell \mu).
\end{align}
We next collect some results dating back to Gauss regarding the evaluation of $G(a,\ell;q)$.

 Given integers $p,q$ we let $(p/q)$ denote the Jacobi symbol. Recall quadratic reciprocity, see~\cite[Equation~1.2.7]{BEW}.
\begin{lemma}
\label{lem:qreciprocity}
For any odd positive integers $p,q$ we have 
\begin{align*}
\left(\frac{p}{q}\right)\left(\frac{q}{p}\right)=(-1)^{(p-1)(q-1)/4}.
\end{align*}
\end{lemma}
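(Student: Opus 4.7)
The plan is to reduce the Jacobi reciprocity law to the Legendre case by multiplicativity and then prove the Legendre case using the Gauss-sum apparatus being developed in Section~\ref{sec:gauss}. Since this is a textbook result (as the citation to~\cite{BEW} indicates), the point of the sketch is to exhibit the logical skeleton rather than grind through the well-known calculations.

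For the reduction, factor $p=\prod_i p_i$ and $q=\prod_j q_j$ into odd primes (with multiplicity), and use multiplicativity of the Jacobi symbol in both arguments to obtain
\[ \left(\frac{p}{q}\right)\left(\frac{q}{p}\right)=\prod_{i,j}\left(\frac{p_i}{q_j}\right)\left(\frac{q_j}{p_i}\right). \]
Assuming the Legendre version supplies $(-1)^{(p_i-1)(q_j-1)/4}$ for each pair with $p_i\ne q_j$ (a pair with $p_i=q_j$ would force $\gcd(p,q)>1$, in which case both sides of the lemma vanish), collapsing the product reduces to the additive-mod-$2$ identity $(ab-1)/2\equiv(a-1)/2+(b-1)/2\pmod 2$ for odd $a,b$, applied to the odd factorisations of $p$ and $q$.

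For the Legendre case with distinct odd primes $p,q$, I would introduce the quadratic Gauss sum
\[ \tau=\sum_{a=1}^{p-1}\left(\frac{a}{p}\right)\zeta_p^a,\qquad \zeta_p=e^{2\pi i/p}, \]
and establish $\tau^2=p^*:=(-1)^{(p-1)/2}p$ by a character-sum manipulation, together with $\tau^q\equiv(q/p)\tau\pmod q$ in $\Z[\zeta_p]/q\Z[\zeta_p]$ via the ``freshman's dream'' followed by the substitution $a\mapsto aq^{-1}\bmod p$. Combining these with $\tau^q=\tau(\tau^2)^{(q-1)/2}=\tau(p^*)^{(q-1)/2}$ and cancelling the unit $\tau$ (it is a unit mod $q$ since $\tau^2=\pm p$ and $\gcd(p,q)=1$) yields $(q/p)\equiv(p^*/q)\pmod q$, hence exact equality since both sides lie in $\{\pm 1\}$; unfolding $p^*$ produces the asserted sign $(-1)^{(p-1)(q-1)/4}$. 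The only non-trivial bookkeeping is the evaluation $\tau^2=p^*$: the factor $(-1)^{(p-1)/2}$ emerges only after isolating the $c=-1$ contribution in the inner sum and invoking $\left(\frac{-1}{p}\right)=(-1)^{(p-1)/2}$, which is the main place one must be careful. Beyond this everything is formal, and for the present paper the citation to~\cite{BEW} is clearly the right route.
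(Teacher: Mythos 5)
The paper establishes this lemma by citation to~\cite[Equation~1.2.7]{BEW} rather than by argument, so any proof you supply is automatically a different route. Your outline --- reduce to the prime case by multiplicativity of the Jacobi symbol together with the exponent identity $(ab-1)/2\equiv(a-1)/2+(b-1)/2\pmod 2$ for odd $a,b$, then prove Legendre reciprocity by the Eisenstein Gauss-sum method ($\tau^2=p^{*}$, $\tau^{q}\equiv(q/p)\tau\pmod q$, cancel the unit $\tau$, and compare with $(p^{*})^{(q-1)/2}\equiv(p^{*}/q)\pmod q$ via Euler's criterion) --- is the standard complete proof and is correct in structure.

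The one genuine slip is the parenthetical claim that if some $p_i=q_j$ then ``both sides of the lemma vanish.'' If $\gcd(p,q)>1$ the left-hand side does vanish, since $(p/q)=(q/p)=0$, but the right-hand side $(-1)^{(p-1)(q-1)/4}$ is always $\pm 1$ and never zero. So the identity as printed in the paper is only valid under the unstated hypothesis $\gcd(p,q)=1$ (this is how it appears in~\cite{BEW}, and coprimality does hold everywhere the lemma is invoked, e.g.\ $(d/m)$ with $(m,sq_0)=1$ and $d\mid q_0$). Your reduction step should therefore simply assume coprimality, so that each pair $(p_i,q_j)$ consists of distinct primes, rather than try to absorb the degenerate case; as written that sentence is false and would not survive scrutiny.
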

Evaluating the Jacobi symbol at even integers requires seperate treatement, see~\cite[pg. 17]{BEW}.
\begin{lemma}
\label{lem:jacobieven}
For any odd positive integer $q$ we have 
\begin{align*}
\left(\frac{2}{q}\right)=(-1)^{(q^2-1)/8}.
\end{align*}
\end{lemma}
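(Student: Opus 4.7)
The plan is to reduce to the case where $q$ is prime via the multiplicativity of the Jacobi symbol, then handle the prime case by Gauss's lemma. Factoring $q=p_1p_2\cdots p_k$ into primes with multiplicity, the definition of the Jacobi symbol immediately gives
$$\left(\frac{2}{q}\right)=\prod_{i=1}^{k}\left(\frac{2}{p_i}\right),$$
so it suffices to prove the claim when $q$ is an odd prime and, in tandem, establish the arithmetic identity
$$\frac{q^2-1}{8}\equiv\sum_{i=1}^{k}\frac{p_i^2-1}{8}\pmod{2}.$$

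For the arithmetic identity I would argue by induction on $k$ using the polynomial identity
$$(ab)^2-1=(a^2-1)(b^2-1)+(a^2-1)+(b^2-1).$$
When $a,b$ are both odd one has $8\mid(a^2-1)$ and $8\mid(b^2-1)$, so $(a^2-1)(b^2-1)/8$ is divisible by $8$ and in particular even. Dividing the displayed identity by $8$ and reducing modulo $2$ yields
$$\frac{(ab)^2-1}{8}\equiv\frac{a^2-1}{8}+\frac{b^2-1}{8}\pmod{2},$$
and induction completes the reduction step.

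For the prime case I would invoke Gauss's lemma: for an odd prime $p$ and an integer $a$ coprime to $p$, $(a/p)=(-1)^{\mu}$ where $\mu$ counts the elements of $\{a,2a,\dots,((p-1)/2)a\}$ whose least positive residues modulo $p$ lie in $(p/2,p)$. Applied with $a=2$ the set is simply $\{2,4,\dots,p-1\}$, so $\mu$ is the number of even integers in the interval $(p/2,p-1]$, which is straightforwardly $\lfloor (p-1)/4\rfloor$ after a small calculation. Splitting into the residue classes $p\equiv 1,3,5,7\pmod{8}$ and comparing with $(p^2-1)/8\pmod{2}$ in each case yields equality of parities, proving the prime case.

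The main obstacle, such as it is, lies in the bookkeeping for the four residue classes modulo $8$ in the Gauss's lemma step; there is no deep issue, but one must be careful to match $\mu$ to $(p^2-1)/8\pmod{2}$ in every case. An alternative route that avoids this case split is to evaluate the quadratic Gauss sum $G(1,0;p)$ at $a=2$ via the completing-the-square identity used later in the paper, extracting $(2/p)$ as the ratio $G(2,0;p)/G(1,0;p)$; this substitutes explicit Gauss sum evaluations for the combinatorial count. In either route the result is entirely classical and the argument is short.
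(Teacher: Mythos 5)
The paper gives no proof of this lemma; it simply cites \cite[pg.~17]{BEW}, so there is no ``paper's own proof'' to compare against. Your two-step reduction (multiplicativity of the Jacobi symbol plus the parity identity for $(q^2-1)/8$, then Gauss's lemma for the prime case) is the standard classical argument and is sound.

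One small slip worth flagging: your claimed formula $\mu=\lfloor(p-1)/4\rfloor$ is not correct. Gauss's lemma with $a=2$ counts even integers $2j$ with $p/2<2j\le p-1$, equivalently $p/4<j\le(p-1)/2$, so
\begin{align*}
\mu=\frac{p-1}{2}-\left\lfloor\frac{p}{4}\right\rfloor,
\end{align*}
which differs from $\lfloor(p-1)/4\rfloor$ when $p\equiv 3,7\pmod 8$ (for instance $p=3$ gives $\mu=1$, not $0$; $p=7$ gives $\mu=2$, not $1$). Using your formula as stated would produce the wrong parity in exactly those two classes. Since you explicitly plan to split into the four residue classes modulo $8$ and check parities, the error would be caught in that verification, so the overall argument stands; but the intermediate closed form should be corrected. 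The alternative route you mention via the ratio $G(2,0;p)/G(1,0;p)$ also works and dovetails nicely with Lemma~\ref{lem:13}, though it trades the Gauss's-lemma count for the evaluation of the quadratic Gauss sum, which is not really shorter.
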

 Our next result follows from completing the square in summation over $\mu$ in~\eqref{eq:Gdef}, see for example~\cite[Lemma~2.10]{BI}.
\begin{lemma}
\label{lem:gausssum1}
If $q\ge 1$ and $(a,q)=1$ then 
\begin{align*}
G(a,\ell,q)=\begin{cases} e_q\left(-\overline a\frac{\ell^2}{4}\right)G(a,0;q) \quad \text{if} \quad \ell \equiv 0 \mod{2}, \\ e_q\left(-\overline a\frac{\ell^2-1}{4}\right)G(\overline a,\overline a;q) \quad \text{if} \quad \ell\equiv 1 \mod{2}.  \end{cases}
\end{align*}
\end{lemma}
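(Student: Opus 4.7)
The plan is to reduce to a complete Gauss sum of simpler shape by a linear change of variable in the summation index, splitting into the two cases according to the parity of $\ell$. The subtlety is that $q$ is \emph{not} assumed odd, so one cannot freely invert $2$ modulo $q$; the parity split exists precisely to sidestep this.

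In the even case, I would write $\ell=2m$ with $m\in\Z$ and substitute $\mu=\nu-\overline{a}m$. Since $(a,q)=1$ the inverse $\overline{a}$ exists mod $q$ and $\overline{a}m$ is a well-defined residue, so the map $\nu\mapsto\nu-\overline{a}m$ is a bijection of $\Z/q\Z$ and the summation can be relabelled without changing its value. A direct expansion of $a(\nu-\overline{a}m)^2+\ell(\nu-\overline{a}m)$ modulo $q$ produces $a\nu^{2}-\overline{a}\ell^{2}/4$, and pulling the resulting constant out of the sum yields the claimed identity
$$G(a,\ell;q)=e_q\!\left(-\overline{a}\,\ell^{2}/4\right)G(a,0;q).$$

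The odd case requires one additional step, since a direct completion of the square would formally require $\overline{2}$, which fails for even $q$. I would first dilate by $\overline{a}$, substituting $\mu=\overline{a}\nu$, which is a bijection of $\Z/q\Z$ and converts $G(a,\ell;q)$ into $G(\overline{a},\ell\overline{a};q)$. Then I would shift by the \emph{integer} $(1-\ell)/2$ (an integer precisely because $\ell$ is odd), putting $\nu=\xi+(1-\ell)/2$. Expanding $\overline{a}\nu^{2}+\ell\overline{a}\nu$ in $\xi$, the quadratic coefficient remains $\overline{a}$, the linear coefficient collapses to $\overline{a}\bigl(2\cdot(1-\ell)/2+\ell\bigr)=\overline{a}$ as required, and the constant term simplifies via the factorisation $(1-\ell)(1+\ell)/4=(1-\ell^{2})/4$ to give $-\overline{a}(\ell^{2}-1)/4$. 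Extracting this phase delivers
$$G(a,\ell;q)=e_q\!\left(-\overline{a}\,(\ell^{2}-1)/4\right)G(\overline{a},\overline{a};q).$$

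The only genuine obstacle is bookkeeping: one must verify that each change of variable is really a bijection of $\Z/q\Z$ (so that summation can be relabelled), and that each shift is by a bona fide integer rather than by a rational that only makes sense after inverting $2$ modulo $q$. This is exactly why the statement bifurcates on the parity of $\ell$, and why in the odd case one first dilates by $\overline{a}$ before shifting by $(1-\ell)/2\in\Z$. Once these bijectivity checks are in place the remaining work is a routine algebraic expansion, so I do not anticipate any substantive difficulty.
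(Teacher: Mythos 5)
Your proposal is correct and follows exactly the approach the paper indicates: the paper simply states that the lemma ``follows from completing the square'' and cites \cite[Lemma~2.10]{BI}, and your substitutions $\mu=\nu-\overline{a}m$ (even case) and the dilation-then-integer-shift $\mu=\overline{a}\nu$, $\nu=\xi+(1-\ell)/2$ (odd case) are precisely the standard way to carry this out when $2$ need not be invertible modulo $q$. The bijectivity and integrality checks you flag are the right things to verify, and the algebra leading to the claimed phases is correct.
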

If $q$ is odd we may complete the square in a way which does not depend on the parity of $\ell$.
\begin{lemma}
\label{lem:gausssum2}
Let $q\ge 1$ be odd and $(a,q)=1$. For any integer $\ell$ 
\begin{align*}
G(a,\ell,q)=e_q(-\overline{4a}\ell^2)G(a,0;q).
\end{align*}
\end{lemma}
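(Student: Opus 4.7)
The plan is to prove the identity by completing the square in the exponent, exploiting that the oddness of $q$ makes $2$, and hence $4a$, invertible modulo $q$. Since $(a,q)=1$ and $q$ is odd, $(2a,q)=1$, so $\overline{2a}$ and $\overline{4a}$ are well-defined residues modulo $q$.

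First I would perform the substitution $\mu=\nu-\overline{2a}\,\ell$ in the defining sum~\eqref{eq:Gdef}. Because $\mu\mapsto \nu$ is a bijection of $\Z/q\Z$, the range of summation is unchanged. A direct expansion gives
\begin{align*}
a\mu^2+\ell\mu
&=a\bigl(\nu-\overline{2a}\,\ell\bigr)^2+\ell\bigl(\nu-\overline{2a}\,\ell\bigr)\\
&\equiv a\nu^2-2a\,\overline{2a}\,\ell\nu+a\,\overline{2a}^{\,2}\ell^2+\ell\nu-\overline{2a}\,\ell^2\pmod q.
\end{align*}
Using $2a\,\overline{2a}\equiv 1\pmod q$ and $a\,\overline{2a}^{\,2}\equiv\overline{4a}\pmod q$, the middle terms collapse to
\begin{align*}
a\mu^2+\ell\mu\equiv a\nu^2+\overline{4a}\,\ell^2-\overline{2a}\,\ell^2\equiv a\nu^2-\overline{4a}\,\ell^2\pmod q,
\end{align*}
where at the final step I use $\overline{2a}-\overline{4a}=2\overline{4a}-\overline{4a}=\overline{4a}$. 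Pulling the constant factor $e_q(-\overline{4a}\,\ell^2)$ out of the sum then yields exactly the claimed identity.

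There is essentially no obstacle here: the only point requiring care is the algebraic manipulation of modular inverses of $2a$ and $4a$, which is legitimate precisely because $q$ is odd. Note that Lemma~\ref{lem:gausssum1} already handles general $q$ by a parity case split for $\ell$; the purpose of Lemma~\ref{lem:gausssum2} is to give a uniform, parity-free formula when $q$ is odd, and the above substitution achieves this in one step.
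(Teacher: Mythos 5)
Your proof is correct and follows exactly the completing-the-square approach the paper indicates (the paper itself states this lemma without proof, just noting that for odd $q$ one may complete the square uniformly in $\ell$). The substitution $\mu=\nu-\overline{2a}\,\ell$, the cancellation of the linear term via $2a\,\overline{2a}\equiv 1$, and the identity $a\,\overline{2a}^{\,2}\equiv\overline{4a}$, $\overline{2a}\equiv 2\overline{4a}\pmod q$ are all handled correctly, with the oddness of $q$ justifying the invertibility of $2$ and hence of $4a$.
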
 

We will require explicit evaluation of the Gauss sums occuring in Lemma~\ref{lem:gausssum1} and will consider the value of $q$ mod $4$ on a case by case basis. We first note a consequence of the Chinese Remainder Theorem.
\begin{lemma}
\label{lem:gaussmult}
For any $(q_1,q_2)=1$ and $a,\ell$ we have 
\begin{align*}
G(a,\ell;q_1q_2)=G(aq_1,\ell;q_2)G(aq_2,\ell,q_1).
\end{align*}
\end{lemma}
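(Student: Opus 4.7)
The plan is to prove the factorization by the Chinese Remainder Theorem: parameterize residues modulo $q_1 q_2$ by pairs of residues modulo $q_1$ and modulo $q_2$, expand the quadratic, and verify that the exponential separates.

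Concretely, I would use the substitution $\mu = q_2 \mu_1 + q_1 \mu_2$, which by CRT gives a bijection between $\Z/q_1\Z \times \Z/q_2\Z$ and $\Z/q_1 q_2 \Z$ when $(q_1, q_2) = 1$. Expanding, one has
\begin{align*}
a\mu^2 + \ell \mu = a q_2^2 \mu_1^2 + a q_1^2 \mu_2^2 + 2 a q_1 q_2 \mu_1 \mu_2 + \ell q_2 \mu_1 + \ell q_1 \mu_2.
\end{align*}
The cross term $2 a q_1 q_2 \mu_1 \mu_2$ is an integer multiple of $q_1 q_2$, so its contribution to $e_{q_1 q_2}(\cdot)$ is trivial. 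The remaining pieces divide cleanly: the factor $q_2$ (respectively $q_1$) cancels against the denominator $q_1 q_2$ to leave
\begin{align*}
e_{q_1 q_2}(a q_2^2 \mu_1^2 + \ell q_2 \mu_1) &= e_{q_1}(a q_2 \mu_1^2 + \ell \mu_1),\\
e_{q_1 q_2}(a q_1^2 \mu_2^2 + \ell q_1 \mu_2) &= e_{q_2}(a q_1 \mu_2^2 + \ell \mu_2).
\end{align*}

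After this, the sum over $\mu$ splits as a product of independent sums over $\mu_1 \bmod q_1$ and $\mu_2 \bmod q_2$, yielding
\begin{align*}
G(a,\ell;q_1 q_2) = G(a q_2, \ell; q_1) \, G(a q_1, \ell; q_2),
\end{align*}
which is the claimed identity. There is no real obstacle here; the only thing to be careful about is verifying that $\mu \mapsto (\mu_1, \mu_2)$ is genuinely a bijection of complete residue systems (standard CRT) and that the factor of $2$ in the cross term causes no issue — which it does not, since $2 a q_1 q_2 \mu_1 \mu_2$ is integral regardless of parity of anything.
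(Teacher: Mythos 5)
Your proof is correct and is precisely the standard CRT argument the paper has in mind: the paper states the lemma with no written proof, merely noting it "follows from the Chinese Remainder Theorem," and your write-out of the substitution $\mu = q_2\mu_1 + q_1\mu_2$, the vanishing of the cross term modulo $q_1q_2$, and the resulting factorization is exactly what that citation is suppressing.
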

 The following is~\cite[Theorem~1.52]{BEW}.
\begin{lemma}
\label{lem:13}
Let $(a,q)=1$ with $q\ge 1$ and odd. Then 
\begin{align*}
G(a,0;q)= \begin{cases} \left(\frac{a}{q}\right)q^{1/2} \quad \text{if} \quad q\equiv 1 \mod{4}, \\
\left(\frac{a}{q}\right)iq^{1/2} \quad \text{if} \quad q\equiv 3 \mod{4}.
\end{cases}
\end{align*}
\end{lemma}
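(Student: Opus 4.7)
The plan is to reduce the evaluation to the classical Gauss sum for an odd prime and extend to general odd $q$ via multiplicativity (Lemma~\ref{lem:gaussmult}) and a lifting argument for prime powers. First I would establish, for odd prime $p$ and $(a,p)=1$, the identity $G(a,0;p)=\left(\frac{a}{p}\right)G(1,0;p)$ by counting square roots: the number of $\mu\pmod p$ with $\mu^2\equiv t$ is $1+\left(\frac{t}{p}\right)$ for $t\not\equiv 0$ (and $1$ for $t=0$), so after absorbing the constant term via $\sum_{t=0}^{p-1}e_p(at)=0$ one arrives at
\begin{align*}
G(a,0;p)=\sum_{t=1}^{p-1}\left(\frac{t}{p}\right)e_p(at),
\end{align*}
whereupon the substitution $t\mapsto\overline{a}t$ extracts $\left(\frac{a}{p}\right)$.

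The core input, and the main obstacle, is the classical Gauss theorem that $G(1,0;p)$ equals $p^{1/2}$ for $p\equiv 1\pmod 4$ and $ip^{1/2}$ for $p\equiv 3\pmod 4$. The magnitude $|G(1,0;p)|=p^{1/2}$ and the square $G(1,0;p)^2=\left(\frac{-1}{p}\right)p$ are elementary, leaving the ambiguity between the four candidate values $\pm p^{1/2}$ and $\pm ip^{1/2}$. I would resolve this via Schur's trace argument: the $p\times p$ matrix $F$ with entries $F_{jk}=p^{-1/2}e_p(jk)$ satisfies $F^4=I$, hence has eigenvalues in $\{1,-1,i,-i\}$. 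On one hand $\operatorname{tr}(F)=p^{-1/2}G(1,0;p)$, and on the other the multiplicities of the eigenvalues of $F$ are pinned down by the eigenvalues of $F^2$ (which is the flip permutation $j\mapsto -j$, whose eigenspaces have known dimensions) together with $\det F$, giving the stated sign.

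Next I would lift to odd prime powers $p^k$ with $k\ge 2$ via the substitution $\mu=\alpha+p^{\lceil k/2\rceil}\beta$, under which $\mu^2\equiv\alpha^2+2p^{\lceil k/2\rceil}\alpha\beta\pmod{p^k}$; summation in $\beta$ forces $p^{\lfloor k/2\rfloor}\mid \alpha$, producing
\begin{align*}
G(a,0;p^k)=\begin{cases} p^{k/2}, & k\text{ even}, \\ p^{(k-1)/2}\,G(a,0;p), & k\text{ odd}. \end{cases}
\end{align*}
Since $\left(\frac{a}{p^k}\right)=\left(\frac{a}{p}\right)^k$ and $p^k\equiv 1\pmod 4$ for $k$ even while $p^k\equiv p\pmod 4$ for $k$ odd, the target formula for prime powers is immediate.

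Finally, for general odd $q=\prod_i p_i^{k_i}$ I would apply Lemma~\ref{lem:gaussmult} to factor $G(a,0;q)=\prod_i G(a\,q/p_i^{k_i},0;p_i^{k_i})$, apply the prime-power formula to each factor, and simplify. The product of Jacobi symbols assembles into $\left(\frac{a}{q}\right)$ by multiplicativity, and the accumulated factors of $i$ arising from primes $p_i\equiv 3\pmod 4$ with $k_i$ odd collapse, via quadratic reciprocity (Lemma~\ref{lem:qreciprocity}) and $\left(\frac{-1}{q}\right)=(-1)^{(q-1)/2}$, to a single phase determined by $q\bmod 4$, yielding the stated closed form.
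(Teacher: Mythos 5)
The paper does not prove this lemma at all; it cites it directly as \cite[Theorem~1.52]{BEW}, so there is no in-paper argument to compare against. Your proposal is a correct, self-contained reconstruction of the classical proof: the reduction $G(a,0;p)=\left(\frac{a}{p}\right)G(1,0;p)$ via counting square roots, the sign determination for $G(1,0;p)$ via Schur's spectral argument on the DFT matrix, the lift to odd prime powers by the substitution $\mu=\alpha+p^{\lceil k/2\rceil}\beta$, and the assembly over prime factors via Lemma~\ref{lem:gaussmult}. The prime-power recursion and the final bookkeeping (that the product $\prod_i\left(\frac{q/p_i^{k_i}}{p_i^{k_i}}\right)\prod_i\varepsilon(p_i^{k_i})$ collapses to $\varepsilon(q)$ via quadratic reciprocity) both check out. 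The only place the sketch is a bit compressed is the Schur step: knowing the spectrum of $F^2$ and $|\operatorname{tr} F|=1$ pins down $\operatorname{tr} F$ only up to sign within the correct axis (real for $p\equiv 1$, imaginary for $p\equiv 3\bmod 4$), and the final sign genuinely requires the Vandermonde evaluation of $\det F$ (or an equivalent input); you gesture at this but do not carry it out. Since the paper treats the lemma as a black-box citation, a reader of your version would in practice still want to cite the sign computation rather than reprove it, but the outline is mathematically sound.
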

Using Lemma~\ref{lem:gaussmult} we may deal with the case $q\equiv 2 \mod{4}$.
\begin{lemma}
\label{lem:2}
Let $(a,2q)=1$ with $q\ge 1$ and odd. Then 
\begin{align*}
G(a,0;2q)= 0.
\end{align*}
\end{lemma}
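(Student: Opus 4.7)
The plan is to invoke the multiplicativity from Lemma~\ref{lem:gaussmult} with the coprime factorization $2q = 2 \cdot q$, which is legitimate since $q$ is odd. This expresses $G(a,0;2q)$ as a product of two complete Gauss sums, one of modulus $q$ and one of modulus $2$, and it then suffices to show that the modulus-$2$ factor vanishes. Concretely, the factorization gives
\begin{align*}
G(a,0;2q) = G(2a,0;q)\cdot G(aq,0;2),
\end{align*}
so we only need to understand the very short sum $G(aq,0;2)$.

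I would compute $G(aq,0;2)$ directly from the definition~\eqref{eq:Gdef}, which is merely a sum of two terms corresponding to $\mu=1$ and $\mu=2$. Since $(a,2q)=1$ forces $a$ to be odd, and $q$ is odd by hypothesis, the product $aq$ is odd. The $\mu=1$ term is $e_2(aq)=-1$ while the $\mu=2$ term is $e_2(4aq)=1$, and these cancel. Substituting $G(aq,0;2)=0$ back into the factorization yields $G(a,0;2q)=0$, as claimed. There is no real obstacle here: the content of the lemma is simply that cancellation over residues modulo $2$ already annihilates the sum, independently of the behaviour of the odd part.
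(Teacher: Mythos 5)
Your argument is correct and follows the paper's proof exactly: both invoke Lemma~\ref{lem:gaussmult} with the coprime factorization $2q = 2\cdot q$ and then observe that $G(aq,0;2)=0$ because $aq$ is odd. The only difference is that you spell out the two-term computation of $G(aq,0;2)$ explicitly, which the paper leaves implicit.
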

\begin{proof}
By Lemma~\ref{lem:gaussmult}
\begin{align*}
G(a,0;2q)=G(aq,0;2)G(2a,0;q).
\end{align*}
Since $aq$ is odd, we see that 
\begin{align*}
G(aq,0;2)=0,
\end{align*}
which completes the proof.
\end{proof}
The following is~\cite[Theorem~1.5.4]{BEW}.
\begin{lemma}
\label{lem:4}
Let $(a,q)=1$  with $q\ge 1$ and $q\equiv 0 \mod{4}$. We have 
\begin{align*}
G(a,0;q)=\left(\frac{q}{a}\right)(1+i^{a})q^{1/2}.
\end{align*}
\end{lemma}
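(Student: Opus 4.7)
The plan is to derive the formula by combining the odd-modulus evaluation (Lemma~\ref{lem:13}) with a direct evaluation of $2$-power Gauss sums, via the multiplicativity relation (Lemma~\ref{lem:gaussmult}). Since $4\mid q$ and $(a,q)=1$, both $a$ and $m:=q/2^s$ are odd, where $s\ge 2$ is the $2$-adic valuation of $q$. Lemma~\ref{lem:gaussmult} then factors
\begin{align*}
G(a,0;q) = G(am,0;2^s)\,G(a2^s,0;m),
\end{align*}
and the odd-modulus factor is evaluated by Lemma~\ref{lem:13} as $\left(\frac{a2^s}{m}\right)\varepsilon_m\,m^{1/2}$, with $\varepsilon_m\in\{1,i\}$ determined by $m\bmod 4$.

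The remaining task is to establish, for odd $b$ and $s\ge 2$, the identity
\begin{align*}
G(b,0;2^s) = \left(\frac{2}{b}\right)^s(1+i^b)\,2^{s/2}.
\end{align*}
The base case $s=2$ follows from the elementary observation that $\mu^2\bmod 4$ takes values $0,1,0,1$ on $\{0,1,2,3\}$, giving $G(b,0;4) = 2(1+i^b)$, which agrees since $\left(\frac{2}{b}\right)^2 = 1$. For $s\ge 3$ I would argue by induction: the shift $\mu \mapsto \mu+2^{s-1}$ preserves $\mu^2 \pmod{2^s}$, so the sum reduces to twice a sum of length $2^{s-1}$; splitting the latter according to the parity of $\mu$, the even part is proportional to $G(b,0;2^{s-2})$, to which the inductive hypothesis applies, while the odd part is handled using $\mu^2\equiv 1\pmod 8$, with Lemma~\ref{lem:jacobieven} tracking the factor $\left(\frac{2}{b}\right)$.

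Combining and using $q^{1/2}=2^{s/2}m^{1/2}$ gives
\begin{align*}
G(a,0;q) = \left(\frac{2}{am}\right)^s(1+i^{am})\left(\frac{a2^s}{m}\right)\varepsilon_m\,q^{1/2}.
\end{align*}
The final step is purely algebraic. Expanding $\left(\frac{2}{am}\right)^s = \left(\frac{2}{a}\right)^s\left(\frac{2}{m}\right)^s$ and $\left(\frac{a2^s}{m}\right) = \left(\frac{a}{m}\right)\left(\frac{2}{m}\right)^s$, using $\left(\frac{2}{m}\right)^{2s}=1$, and applying quadratic reciprocity (Lemma~\ref{lem:qreciprocity}) as $\left(\frac{a}{m}\right) = (-1)^{(a-1)(m-1)/4}\left(\frac{m}{a}\right)$, one sees that the target $\left(\frac{q}{a}\right)(1+i^a)q^{1/2} = \left(\frac{2}{a}\right)^s\left(\frac{m}{a}\right)(1+i^a)q^{1/2}$ reduces to the identity
\begin{align*}
(-1)^{(a-1)(m-1)/4}(1+i^{am})\varepsilon_m = 1+i^a.
\end{align*}
The main obstacle is this final sign-matching. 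Noting that $i^{am} = i^a$ when $m\equiv 1\pmod 4$ and $i^{am} = \overline{i^a}$ when $m\equiv 3\pmod 4$, together with $\varepsilon_m = 1$ or $i$ in the two cases respectively, the four residue classes of $(a,m)\bmod 4$ each become a direct verification in which the sign from reciprocity cancels exactly against the combined effect of $\varepsilon_m$ and the (possible) complex conjugation in $1+i^{am}$, completing the proof.
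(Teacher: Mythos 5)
The paper does not prove this lemma at all: it is stated as a citation to~\cite[Theorem~1.5.4]{BEW}. So your proposal is not following a different \emph{route} through the paper's proof --- you are supplying a proof where the paper supplies none, using only the other lemmas in Section~\ref{sec:gauss}. That is a legitimate thing to do, and your derivation is correct.

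To confirm the pieces: the factorization $G(a,0;q)=G(a2^s,0;m)\,G(am,0;2^s)$ from Lemma~\ref{lem:gaussmult} is applied correctly; the odd factor is read off Lemma~\ref{lem:13}; and the closed form $G(b,0;2^s)=\left(\frac{2}{b}\right)^{s}(1+i^{b})2^{s/2}$ for odd $b$ and $s\ge 2$ is the correct $2$-power evaluation. Your algebraic reduction --- expanding $\left(\frac{a2^s}{m}\right)$ and $\left(\frac{2}{am}\right)^s$, killing $\left(\frac{2}{m}\right)^{2s}=1$, and invoking Lemma~\ref{lem:qreciprocity} --- correctly boils the matching down to the identity
\begin{align*}
(-1)^{(a-1)(m-1)/4}\,\varepsilon_m\,(1+i^{am}) = 1+i^{a},
\end{align*}
and this does hold by a four-case check on $(a\bmod 4,\,m\bmod 4)$, exactly as you indicate: when $m\equiv 1\pmod 4$ everything is the identity; when $m\equiv 3\pmod 4$ the factor $\varepsilon_m=i$ combines with $1+i^{am}=\overline{1+i^a}$ and the reciprocity sign $(-1)^{(a-1)/2}$ to reproduce $1+i^a$.

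The one place where your sketch is looser than the rest is the inductive step for $G(b,0;2^s)$. As stated --- shift by $2^{s-1}$, halve, split by parity, handle the odd block via $\mu^2\equiv1\pmod 8$ and Lemma~\ref{lem:jacobieven} --- the mechanics are not quite what actually happens: for $s\ge 4$ the odd block vanishes outright (pairing $\mu\leftrightarrow\mu+2^{s-2}$ flips the sign of the phase), leaving $G(b,0;2^s)=2G(b,0;2^{s-2})$, while for $s=3$ it is the \emph{even} block that vanishes and the odd block that carries the answer $4e_8(b)$; the $\left(\frac{2}{b}\right)$ from Lemma~\ref{lem:jacobieven} only enters through that $s=3$ base case. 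So a clean write-up would induct in steps of two from the base cases $s=2,3$ rather than in steps of one. This is a presentational point, not a gap in the plan: the closed form is correct, your base case $s=2$ is verified, and the recursion $G(b,0;2^s)=2G(b,0;2^{s-2})$ is consistent with it since $\left(\frac{2}{b}\right)^2=1$.
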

We next consider sums of the form $G(a,a;q)$. For odd $q$  these may be evaluated using Lemma~\ref{lem:gausssum2}. Our next result deals with the case of even $q$. 
\begin{lemma}
\label{lem:evenaa}
Let $q\ge 1$  be odd. For any $(a,2q)=1$ we have 
\begin{align*}
G(a,a;2q)= 2G(2a,a;q).
\end{align*}
\end{lemma}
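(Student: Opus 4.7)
The plan is to apply the Chinese-remainder-theorem multiplicativity from Lemma~\ref{lem:gaussmult} with the factorization $2q = 2 \cdot q$, which is valid since $q$ is odd and thus $(2,q)=1$. This directly yields
\begin{align*}
G(a,a;2q) = G(2a,a;q) \cdot G(aq,a;2),
\end{align*}
so the whole statement reduces to showing $G(aq,a;2) = 2$.

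The remaining step is a direct evaluation of the two-term sum $G(aq,a;2) = \sum_{\mu=1}^{2} e_2(aq\mu^2 + a\mu)$. For $\mu=1$ the argument is $a(q+1)/2$; since $q$ is odd the integer $a(q+1)$ is even, so $e_2(aq+a)=1$. For $\mu=2$ the argument is $(4aq+2a)/2 = 2aq+a$, an integer, so $e_2(4aq+2a)=1$. Summing gives $G(aq,a;2)=2$, and inserting this into the displayed identity yields the claim.

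There is no real obstacle here: the only thing to check carefully is that $(aq,2)=1$ so that Lemma~\ref{lem:gaussmult} applies, which follows from $(a,2q)=1$ and $q$ odd. The proof is essentially a one-line application of multiplicativity followed by a trivial two-term sum, so I would write it as such without further elaboration.
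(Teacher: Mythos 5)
Your proof is correct and takes essentially the same approach as the paper: apply Lemma~\ref{lem:gaussmult} with the coprime factorization $2q = 2\cdot q$, then evaluate the two-term sum $G(aq,a;2)=2$ directly using that $a$ and $q$ are both odd. The only cosmetic difference is that the paper indexes the two-term sum by $j\in\{0,1\}$ rather than $\mu\in\{1,2\}$.
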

\begin{proof}
By Lemma~\ref{lem:gaussmult}
\begin{align*}
G(a,a;2q)=G(2a,a;q)G(aq,a;2).
\end{align*}
Using that both $q,a$ are odd, we have 
\begin{align*}
G(aq,a;2)=\sum_{j=0}^{1}e^{\pi i(aqj^2+aj)}=2,
\end{align*}
from which the result follows.
\end{proof}
\begin{lemma}
\label{lem:evenaa1}
Let $q\ge 1$  be odd and $k\ge 2$. For any $(a,2q)=1$ and odd integer $b$ we have 
\begin{align*}
G(a,b;2^{k}q)=0.
\end{align*}
\end{lemma}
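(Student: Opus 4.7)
The plan is to reduce to the prime power modulus $2^k$ via the Chinese Remainder Theorem, then exhibit an involution on $\Z/2^k\Z$ that pairs terms of opposite sign.

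First I would apply Lemma~\ref{lem:gaussmult} with $q_1=2^k$ and $q_2=q$ to factor
\begin{align*}
G(a,b;2^{k}q)=G(a\cdot 2^k,b;q)\,G(aq,b;2^{k}).
\end{align*}
Since $q$ is odd and $(a,2q)=1$, the integer $c:=aq$ is coprime to $2$. Thus it suffices to prove $G(c,b;2^k)=0$ for any odd $c$, odd $b$, and $k\ge 2$.

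The core idea is the substitution $\mu\mapsto \mu+2^{k-1}$, which is an involution on $\Z/2^k\Z$ (here the hypothesis $k\ge 2$ is used to ensure $2^{k-1}$ is even, though this is not strictly necessary for the argument). Expanding,
\begin{align*}
c(\mu+2^{k-1})^2+b(\mu+2^{k-1})=c\mu^2+b\mu+c\mu\cdot 2^{k}+c\cdot 2^{2k-2}+b\cdot 2^{k-1}.
\end{align*}
Because $k\ge 2$ we have $2k-2\ge k$, so the middle two terms vanish modulo $2^k$, and therefore
\begin{align*}
e_{2^{k}}\!\left(c(\mu+2^{k-1})^2+b(\mu+2^{k-1})\right)=e_{2^{k}}(c\mu^2+b\mu)\cdot e_{2^{k}}(b\cdot 2^{k-1}).
\end{align*}
Since $b$ is odd, $e_{2^{k}}(b\cdot 2^{k-1})=e^{\pi i b}=-1$. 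Pairing $\mu$ with $\mu+2^{k-1}$ in the complete sum $G(c,b;2^k)$ thus groups the terms into pairs whose contributions cancel, yielding $G(c,b;2^k)=0$.

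There is no real obstacle here: the key observation is just that $k\ge 2$ makes the cross term $2c\mu\cdot 2^{k-1}$ and the diagonal term $c\cdot 2^{2k-2}$ both vanish modulo $2^k$, so the shift contributes only the phase $e_{2^k}(b\cdot 2^{k-1})=-1$ coming from the linear coefficient. The oddness of $b$ is precisely what makes this phase nontrivial, which is why the hypothesis on $b$ is essential.
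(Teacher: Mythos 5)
Your proof is correct, and it takes a genuinely different route from the paper's. Both proofs start identically by applying Lemma~\ref{lem:gaussmult} to reduce to showing $G(c,b;2^k)=0$ for odd $c,b$. From there the paper argues via Hensel's lemma: since $F'(x)=2cx+b$ is always odd, each residue class mod $2^k$ has at most two preimages under $F(x)=cx^2+bx$; since $F(n)$ is always even, the $2^{k-1}$ even classes are hit exactly twice each, and the resulting sum $2\sum_{y=1}^{2^{k-1}}e_{2^{k-1}}(y)$ vanishes. You instead use the standard pairing trick: the fixed-point-free involution $\mu\mapsto \mu+2^{k-1}$ on $\Z/2^k\Z$ changes the phase by exactly $e_{2^k}(b\cdot 2^{k-1})=(-1)^b=-1$ (the cross term $c\mu\cdot 2^k$ always dies, and $c\cdot 2^{2k-2}$ dies precisely because $k\ge 2$), so the $2^{k-1}$ pairs each sum to zero. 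Your argument is more elementary and transparent, and it makes the roles of the two hypotheses (oddness of $b$ gives the sign flip; $k\ge 2$ kills the square term) immediately visible, whereas the paper's argument is a clean application of machinery already in use elsewhere in the section. One small quibble: your parenthetical remark that $k\ge 2$ ``is used to ensure $2^{k-1}$ is even'' is a red herring — the parity of $2^{k-1}$ plays no role; the actual use of $k\ge 2$, which you do state correctly a few lines later, is that $2k-2\ge k$ so that $c\cdot 2^{2k-2}\equiv 0\pmod{2^k}$. I would simply delete that aside.
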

\begin{proof}
By Lemma~\ref{lem:gaussmult}
\begin{align*}
G(a,b;2^{k}q)=G(a2^{k},b;q)G(aq,b;2^{k}).
\end{align*}
Hence it is sufficient to show 
\begin{align*}
G(aq,b;2^{k})=0.
\end{align*}
We have 
\begin{align*}
G(aq,b;2^{k})=\sum_{y=1}^{2^{k}}e_{2^{k}}(aqy^2+by).
\end{align*}
Since $b$ is odd, the polynomial $F(x)=aqx^2+bx$ satisfies 
\begin{align*}
(2,F'(n))=1, \quad n\in \Z.
\end{align*}
By Hensel's Lemma, this implies for any integer $m$
\begin{align*}
|\{1\le n \le 2^{k} \ : \ F(n)\equiv m \mod{2^{k}}\}|\le 2.
\end{align*}
Since each $a,b,q$ are odd,
\begin{align*}
|\{1\le n \le 2^{k} \ : \ F(n)=m\}|=0 \quad \text{if $m$ odd}.
\end{align*}
By the pigeonhole principle 
\begin{align*}
|\{1\le n \le 2^{k} \ : \ F(n)=m\}|=\begin{cases} 2 \quad \text{if $m$ even,} \\ 0 \quad \text{otherwise}, \end{cases}
\end{align*}
and hence
\begin{align*}
G(aq,b;2^{k})=2\sum_{y=1}^{2^{k-1}}e_{2^{k-1}}(y)=0.
\end{align*}
\end{proof}

\section{Weyl differencing}
The main result of this section is Lemma~\ref{lem:weyliter} which will be used in an iterative manner to construct a sequence of good rational approximations to the amplitude function in the cubic exponential sums. The technique is based on Weyl differencing and lattice reduction.
%We next recall Weyl differencing for degree $3$ polynomials, see~\cite[Proposition~8.2]{IwKo} for details.
%\begin{lemma}
%\label{lem:weyldiff}
%Let $g$ be a cubic polynomial with real coefficients and leading coefficient $\alpha$. For any integer $N$ we have 
%\begin{align*}
%\left|\sum_{1\le n \le N}e(g(n)) \right|\le 2N\left(\frac{1}{N^3}\sum_{-N\le \ell_1,\ell_2\le N}\min\left(N,\| 6\alpha \ell_1 \ell_2\|^{-1}\right) \right)^{1/4}.
%\end{align*}
%\end{lemma}
%Lemma~\ref{lem:weyldiff} reduces bounding exponential sums to a lattice point counting problem. Assuming such sums are large we can use basis reduction to find a good approximation to $\alpha$ with small denominator. 

We first recall~\cite[Lemma~6.3]{DKSZ} which is stated for prime modulus only, the exact same proof works for arbitrary modulus. 
\begin{lemma}
\label{lem:shortpoint}
Let $q$ be a positive integer, $K \ge 1$ and $\cI,\cJ$  two intervals containing $h$ and $H$ integers respectivley.
For integer $b$ satisfying $(b,q)=1$, let $I(b)$ count the number of solutions to  the congruence 
\begin{align}
\label{eq:maincong}
yb\equiv x \mod{q}, \quad x\in \cI, \quad y\in \cJ.
\end{align}
One of the following two cases hold
\begin{align}
\label{eq:case11}
I(b)\ll \frac{Hh}{q}.
\end{align}
\begin{align}
\label{eq:case12}
\text{If} \ \   I(s)\ge &  K  \ \  \text{then there exists $\ell,s$ satisfying}  \nonumber  \\
&  b\equiv \ell \overline{s} \mod{q}, \quad  \ell \ll \frac{h}{K}, \quad |s|\ll \frac{H}{K}.
\end{align}
\end{lemma}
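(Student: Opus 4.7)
The plan is to argue by two-dimensional geometry of numbers applied to the lattice
\[
\Lambda = \{(x, y) \in \Z^2 : x \equiv yb \mod{q}\},
\]
which has covolume $q$ in $\R^2$ thanks to $(b, q) = 1$, and for which $I(b)$ is visibly the number of lattice points of $\Lambda$ lying in the box $\cI \times \cJ$. If $I(b) = 0$ both conclusions hold vacuously, so I would fix a reference solution and translate by its negative: $I(b)$ is then the number of lattice points of $\Lambda$ inside the symmetric box $B = [-h, h] \times [-H, H]$ (up to harmless factors). Rescaling by $\mathrm{diag}(1/h, 1/H)$ transforms $\Lambda$ into a lattice $\Lambda' \subseteq \R^2$ of covolume $q/(hH)$, and $B$ into $[-1, 1]^2$.

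Applying Minkowski's second theorem to $\Lambda'$ (or, equivalently, running two-dimensional Gauss reduction, which is elementary) produces a basis $w_1, w_2$ of $\Lambda'$ realising the successive minima $\lambda_1 \le \lambda_2$ in the sup-norm, with $\lambda_1 \lambda_2 \asymp q/(hH)$. I would then split on the size of $\lambda_2$. If $\lambda_2 \ge 1$, every vector of $\Lambda' \cap [-1, 1]^2$ must be an integer multiple of $w_1$, and such multiples in the unit square number $\ll 1/\lambda_1 + 1$. Assuming $I(b) \ge K$, this forces $\lambda_1 \ll 1/K$; unscaling produces a non-zero $(\ell, s) \in \Lambda$ with $|\ell| \ll h/K$ and $|s| \ll H/K$, and its membership in $\Lambda$ reads $\ell \equiv sb \mod{q}$, giving~\eqref{eq:case12}. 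If instead $\lambda_2 < 1$, the parallelepiped spanned by $w_1, w_2$ already fits inside $[-1, 1]^2$ up to a bounded factor, so tiling gives
\[
I(b) \ll \frac{\mathrm{vol}([-1, 1]^2)}{\mathrm{covol}(\Lambda')} \ll \frac{hH}{q},
\]
which is~\eqref{eq:case11}.

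The only subtlety I anticipate is ensuring that the integer $s$ produced in Case~\eqref{eq:case12} is coprime to $q$, so that $\overline{s}$ modulo $q$ is well defined. The congruence $\ell \equiv sb \mod{q}$ combined with $(b, q) = 1$ does yield $\gcd(\ell, q) = \gcd(s, q)$, after which one replaces $(\ell, s)$ by a suitable primitive representative; this, together with the routine bookkeeping of absolute constants in the Minkowski step, is the only non-mechanical part of the argument.
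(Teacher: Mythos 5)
The paper does not prove this lemma: it cites \cite[Lemma~6.3]{DKSZ} (stated there for prime modulus) and asserts that the same argument works for general $q$. Your lattice/geometry-of-numbers argument is the standard route, and is almost certainly the one used in~\cite{DKSZ}, so as a reconstruction it is reasonable and the main structure (covolume $q$, rescale to $\Lambda'$ of covolume $q/(hH)$, Gauss reduction, split on $\lambda_2 \gtrless 1$) is correct. Two points deserve tightening, one cosmetic and one substantive.

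The cosmetic one: when $\lambda_2 \ge 1$, it is not true that every vector of $\Lambda' \cap [-1,1]^2$ is an integer multiple of $w_1$. The correct statement from Gauss reduction is that for $v = n_1 w_1 + n_2 w_2$ one has $\|v\| \gg \max(|n_1|\lambda_1, |n_2|\lambda_2)$, so $\|v\|\le \sqrt 2$ and $\lambda_2\ge 1$ only force $|n_2| = O(1)$. The conclusion $|\Lambda'\cap[-1,1]^2| \ll 1 + 1/\lambda_1$ still holds, so nothing breaks, but the claim as you phrased it is false.

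The substantive one is exactly the coprimality issue you flag, and I do not think your proposed fix works as stated. Writing $g = \gcd(\ell, s)$, the vector $(\ell/g, s/g)$ need not lie in $\Lambda$: from $\ell - sb \equiv 0 \pmod q$ one cannot conclude $\ell/g - (s/g)b \equiv 0 \pmod q$ unless $g \mid (\ell - sb)/q$, which is not given. So ``replacing $(\ell,s)$ by a primitive representative'' does not produce a new lattice vector. For prime modulus $p$ (the setting of~\cite{DKSZ}) the issue is vacuous, since $p \mid s$ together with $\ell \equiv sb \pmod p$ forces $p\mid \ell$, and the size constraints $0 < \ell < p$, $|s| < p$ then rule it out. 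For general $q$ one does need an extra argument (or a reformulation of the conclusion that only uses $sb \equiv \ell \pmod q$ without inverting $s$), and you should not present this as routine bookkeeping. This is also the one place where ``the exact same proof works for arbitrary modulus'' in the paper is itself a claim rather than a proof.

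Everything else in your sketch is fine: the identification of $I(b)$ with a lattice-point count, the covolume computation, the Minkowski/Gauss reduction giving $\lambda_1\lambda_2 \asymp q/(hH)$, the case $\lambda_2 < 1$ giving $I(b) \ll hH/q$, and the case $\lambda_2 \ge 1$ giving $\lambda_1 \ll 1/K$ and hence a non-zero lattice vector of the required size.
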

We next perform some preliminary manipulations and set up notation which will be used throughout the paper.
\begin{lemma}
\label{lem:weyl3}
Let $g$ be a cubic polynomial with real coefficients of the form 
\begin{align}
\label{eq:gpoly}
g(x)=\frac{ax^3}{q}+\gamma x,
\end{align}
with $(a,q)=1$.
  Let  $f$ be a smooth function satisfying 
\begin{align}
\label{eq:fconds1}
\text{supp}(f)\subseteq [N,2N], \quad f^{(j)}(x)\ll \frac{1}{x^{j}}.
\end{align}
Define
\begin{align}
\label{eq:q0delta1}
q_0=\frac{q}{(q,3)},
\end{align}
\begin{align}
\label{eq:bdef}
b=\begin{cases} 
3a \quad \text{if $(q,3)=1$} \\
a \quad \text{otherwise}.
\end{cases}
\end{align}
There exists some $d|q_0$ such that 
\begin{align*}
&\left|\sum_{n\in \Z}f(n)e(g(n)) \right|^2\ll N  \\ & \quad \quad \quad +q^{o(1)}\sum_{\substack{1\le |m|< dN/q_0 \\ (m,q_0)=1}}e\left(g\left(\frac{q_0m}{d}\right)\right)\sum_{n\in \Z}F_{d,m}(n)e(\rho_{q_0m/d}n)e_{d}(bmn^2).
\end{align*}
where 
\begin{align}
\label{eq:deltarhom}
 \rho_m= \frac{ b m^2}{q_0},
\end{align}
and
\begin{align}
\label{eq:Fdmdef}
F_{d,m}(n)=f(q_0m/d+n)f(n).
\end{align}
\end{lemma}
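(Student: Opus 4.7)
The plan is to execute one round of Weyl differencing, isolate the diagonal contribution, and reparameterize the nonzero shift variable so that the $n^2$-coefficient becomes a reduced fraction with denominator dividing $q_0$.

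First I would open the square and substitute $m = n+h$ to get
\[
\left|\sum_{n\in\Z}f(n)e(g(n))\right|^2 = \sum_{h\in\Z}\sum_{n\in\Z} f(n+h)f(n)\,e\bigl(g(n+h)-g(n)\bigr).
\]
Expanding $g(n+h)-g(n)$ for $g(x) = ax^3/q + \gamma x$ yields
\[
g(n+h)-g(n) = g(h) + \frac{3ah^2}{q}n + \frac{3ah}{q}n^2,
\]
so the $h = 0$ term contributes $\sum_n f(n)^2 \ll N$ by the support and pointwise bound in~\eqref{eq:fconds1}. This produces the first term of the claimed bound.

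For $h\neq 0$, the identities~\eqref{eq:q0delta1}--\eqref{eq:bdef} and the hypothesis $(a,q) = 1$ give $3ah/q = bh/q_0$ with $(b,q_0) = 1$. Reducing this fraction to lowest terms yields $bh/q_0 = bm/d$, where $d = q_0/(q_0, h) \mid q_0$ and $h = (q_0/d)m$, so that $e(3ahn^2/q) = e_d(bmn^2)$. The linear coefficient satisfies
\[
\frac{3ah^2}{q} = \frac{b(q_0 m/d)^2}{q_0} = \rho_{q_0 m/d}
\]
by~\eqref{eq:deltarhom}; the inner kernel is exactly $F_{d,m}(n) = f(q_0m/d + n)f(n)$ from~\eqref{eq:Fdmdef}, and the prefactor is $e(g(q_0m/d))$. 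The restriction $|h| < N$ translates to $1 \le |m| < dN/q_0$.

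The main technical obstacle I anticipate is upgrading the natural coprimality $(m,d)=1$ delivered by the reduction above to the stated condition $(m,q_0)=1$, since $m$ may still share prime factors with $q_0/d$. I would address this by partitioning the $h$-sum according to the $q_0$-part of $h$ (the largest divisor of $h$ whose prime factors all divide $q_0$) and absorbing this part into a refined divisor $d\mid q_0$; since $q_0$ has $\tau(q_0) \ll q^{o(1)}$ divisors, a pigeonhole over these refined choices selects one $d$ capturing the dominant contribution at the $q^{o(1)}$ loss visible in the statement. Combined with the diagonal bound $\ll N$, this completes the proof.
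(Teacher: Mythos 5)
Your overall route is the same as the paper's: expand $|S|^2$ by Weyl differencing, isolate the diagonal to get $\ll N$, and partition the nonzero shift by its gcd structure with $q_0$. The identity $g(n+h)-g(n)=g(h)+\tfrac{3ah^2}{q}n+\tfrac{3ah}{q}n^2$, the reduction $3a/q=b/q_0$ with $(b,q_0)=1$, and the translation of $|h|<N$ into $1\le|m|<dN/q_0$ are all correct. In fact the paper's own proof is exactly this: it partitions by $(m_{\text{old}},q_0)=q_0/d$, writes $m_{\text{old}}=(q_0/d)m$, and observes there are $\tau(q_0)\ll q^{o(1)}$ choices of $d$ so one may pigeonhole.

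Your instinct that there is a gap between $(m,d)=1$ and the stated $(m,q_0)=1$ is sharp, and you have actually spotted a minor inaccuracy in the paper itself: the substitution $m_{\text{old}}=(q_0/d)m$ with $(m_{\text{old}},q_0)=q_0/d$ forces only $(m,d)=1$, and $m$ can still be divisible by primes $p\mid q_0$ with $p\nmid d$ (precisely those with $v_p(m_{\text{old}})>v_p(q_0)$). However, your proposed repair does not work as stated. You suggest partitioning $h$ by its ``$q_0$-part'' (largest divisor of $h$ whose primes all divide $q_0$) and pigeonholing over $\tau(q_0)$ choices — but the $q_0$-part of $h$ need not divide $q_0$; it ranges over $q_0$-smooth integers up to $N$, of which there can be polynomially many (far more than $\tau(q_0)$) when $q_0$ has small prime factors. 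So the pigeonhole does not close at cost $q^{o(1)}$. The cleaner resolution is simply to keep the weaker but correct condition $(m,d)=1$: that is all the downstream Gauss-sum evaluations (mod $d$ and mod $\ell m$) in Lemmas~\ref{lem:weyl4}--\ref{lem:weylS2} actually require, and it is what the paper's partition delivers.
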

\begin{proof}
Let 
\begin{align*}
S=\sum_{n\in \Z}f(n)e(g(n)).
\end{align*}
Expand the square and apply the change of varialbe $m\rightarrow m+n$ to get
\begin{align*}
|S|^2&=\sum_{m,n\in \Z}f(m)f(n)e(g(m)-g(n)) \\ & =\sum_{m,n\in \Z}f(m+n)f(n)e(g(m+n)-g(n)).
\end{align*}
Using
\begin{align*}
g(m+n)-g(n)=g(m)+\frac{3am}{q} n^2+\frac{3a m^2}{q}n,
\end{align*}
and recalling~\eqref{eq:q0delta1},~\eqref{eq:bdef}
\begin{align*}
|S|^2=\sum_{m\in \Z}e(g(m))\sum_{n\in \Z}f(m+n)f(n)e_{q_0}(b m n^2+b m^2n).
\end{align*}
 By~\eqref{eq:fconds1}, if  $f(m+n)f(n)\neq 0$ then  $|m|\le N$ and hence 
\begin{align*}
|S|^2&=\sum_{|m|\le N}e(g(m))\sum_{n\in \Z}f(m+n)f(n)e_{q_0}(b m n^2+b m^2n) \\
&\ll N+\sum_{1\le |m|\le N}e(g(m))\sum_{n\in \Z}f(m+n)f(n)e_{q_0}(b m n^2+b m^2n).
\end{align*}
 We next partition summation over $m$ depending on the value of $(m,q_0)$ to get
\begin{align}
\label{eq:SSd}
S= \sum_{\substack{d|q_0}}S'_{d},
\end{align}
where 
\begin{align*}
S'_{d}&=\sum_{\substack{1\le |m|\le N \\ (m,q_0)=q_0/d }}e\left(g\left(m\right)\right)\sum_{n\in \Z}f(m+n)f(n)e_{q_0}(bm^2n)e_{q_0}(bmn^2) \\
&=\sum_{\substack{1 \le |m|\le dN/q_0 \\ (m,q_0)=1}}e\left(g\left(\frac{q_0m}{d}\right)\right)\sum_{n\in \Z}F_{d,m}(n)e(\rho_{q_0m/d}n)e_{d}(bmn^2).
\end{align*}
Taking a maximum over $d|q_0$ in~\eqref{eq:SSd} and using estimates for the divisor function we complete the proof.  
\end{proof}

%We will use Corollary~\ref{cor:weyl}  as the first step in an iteration to obtain a rational approximation to $a/q$ with very small denominator. The goal of the iteration is to relax the upper bounds on $s$ and $\ell$ in~\eqref{eq:bells}. Our next result gives the main input for this iteration. For later applications of Poission summation it will be convenient to work with summation over smooth functions.
\begin{lemma}
\label{lem:weyliter}
Let notation and conditions be as in Lemma~\ref{lem:weyl3} and assume that
\begin{align}
\label{eq:upassum}
\left|\sum_{n\in \Z}f(n)e(g(n)) \right| \ge \delta (qN)^{1/4+o(1)},
\end{align}
for some 
\begin{align}
\label{eq:deltaconds}
\frac{N^{1/2}}{q^{1/4}}\le \delta \le 1.
\end{align}
Let $Y\le N$ and define
\begin{align}
\label{eq:Sd1-1-1}
S_{d}=\sum_{\substack{dY/q_0\le |m|\le dN/q_0 \\ (m,q_0)=1}}e\left(g\left(\frac{q_0m}{d}\right)\right)\sum_{n\in \Z}F_{d,m}(n)e(\rho_{q_0m/d}n)e_{d}(bmn^2).
\end{align}
Suppose $N$ satisfies 
\begin{align}
\label{eq:Ncondsweyl11}
q^{1/3}\le N\ll \delta^2 q^{1/2}.
\end{align}
Either there exists integers $\ell_0, s_0$ satisfying 
\begin{align}
\label{eq:itercase1}
b\equiv \ell_0 \overline{s_0} \mod{d}, \quad |\ell_0|\ll \frac{1}{\delta^4}\left(\frac{d}{q}\right)^2\frac{Y}{N}, \quad |s_0|\ll \frac{1}{\delta^4}\left(\frac{d}{q}\right)^2\frac{NY^2}{q},
\end{align}
or 
\begin{align}
\label{eq:itercase2}
\left|\sum_{n\in \Z}f(n)e(g(n)) \right|^2\ll q^{o(1)}S_d.
\end{align}
\end{lemma}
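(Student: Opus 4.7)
The plan is to apply Lemma~\ref{lem:weyl3} to $S := \sum_n f(n) e(g(n))$, producing a divisor $d \mid q_0$ with
\[
|S|^2 \ll N + q^{o(1)}\bigl(T_d + S_d\bigr),
\]
where $T_d$ collects the contribution from $1 \le |m| < dY/q_0$ in the outer sum (and $S_d$ the tail as in~\eqref{eq:Sd1-1-1}). Using~\eqref{eq:upassum} and~\eqref{eq:deltaconds}, $|S|^2 \gg \delta^2(qN)^{1/2+o(1)} \gg N$, so the additive $N$ is absorbed. Either $|S|^2 \ll q^{o(1)} S_d$, giving~\eqref{eq:itercase2}, or I must deduce~\eqref{eq:itercase1} from $|S|^2 \ll q^{o(1)} T_d$.

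In the latter case, I apply Cauchy--Schwarz to the outer $m$-sum of $T_d$, obtaining
\[
|T_d|^2 \le (dY/q_0)\sum_{|m|<dY/q_0}|J(m)|^2,
\]
where $J(m)$ denotes the inner $n$-sum in~\eqref{eq:Sd1-1-1}. Expanding $|J(m)|^2$ by the Weyl substitution $n_2 = n_1 - h$ and noting that the factors $e(\rho_{q_0m/d}h)$ and $e_d(-bmh^2)$ are unimodular, matters reduce to estimating
\[
\sum_{|h|\le N}\sum_m \left|\sum_{n_1\in\Z} F_{d,m}(n_1)\overline{F_{d,m}(n_1-h)}\,e_d(2bmh\,n_1)\right|.
\]
For each $(m,h)$ the weight in $n_1$ is a smooth bump of scale $\sim N$ and unit size, so Fourier decay on $\R$ bounds the $n_1$-sum by $N(1+N\|2bmh/d\|)^{-A}$ for any $A\ge 1$. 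Summing, and using the divisor bound to collapse $(m,h)\mapsto y=mh$ with multiplicity $N^{o(1)}$, the dominant contribution is controlled by the lattice count
\[
I(2b) := \#\{(y,x)\in\Z^2 : |y|\le dYN/q_0,\ |x|\le d/N,\ 2by\equiv x\pmod{d}\},
\]
so $|T_d|^2 \ll q^{o(1)}(dY/q_0)\,N\,I(2b)$.

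Combining with $|S|^4 \gg \delta^4 qN\cdot q^{-o(1)}$ forces $I(2b) \gg K := \delta^4 q q_0/(dY)\cdot q^{-o(1)}$. I then invoke Lemma~\ref{lem:shortpoint} with modulus $d$, integer $2b$, and interval sizes $H = dYN/q_0$, $h = d/N$. Case~\eqref{eq:case11} would require $I(2b) \ll Hh/d = dY/q_0$; comparing with the lower bound on $I(2b)$ and using $q_0 \asymp q$, $d \le q_0$, $Y\le N$, this forces $N \gg \delta^2 q^{1/2}\cdot q^{-o(1)}$, contradicting the upper bound in~\eqref{eq:Ncondsweyl11}. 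Hence case~\eqref{eq:case12} holds, yielding $2b\equiv \ell\,\overline{s}\pmod{d}$ with $|\ell| \ll h/K$ and $|s|\ll H/K$; after simplification using $q_0 \asymp q$, these match the sizes of $\ell_0,s_0$ claimed in~\eqref{eq:itercase1}. The factor of $2$ is absorbed by replacing $s$ with $2s$ when $d$ is odd; for even $d$ the identity $e_d(2bmhn_1) = e_{d/(2,d)}(\cdots)$ lets me rerun the argument with modulus the odd part of $d$ and lift the resulting congruence by CRT.

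I expect the main difficulty to be making the Fourier error estimate tight enough that the lattice-count main term genuinely dominates $\sum_m|J(m)|^2$ (rather than the off-diagonal Fourier tail), and verifying that~\eqref{eq:Ncondsweyl11} strictly excludes case~\eqref{eq:case11} of Lemma~\ref{lem:shortpoint} rather than merely touching it at the boundary. The factor-of-$2$ complication for even $d$ is cosmetic but adds tedious CRT bookkeeping.
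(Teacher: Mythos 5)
Your proposal follows essentially the same route as the paper: apply Lemma~\ref{lem:weyl3}, split off the long-$m$ range $S_d$, and in the short-$m$ case run Cauchy--Schwarz, Weyl-difference the inner sum, bound it by $\min\{N,\,\|2bmh/d\|^{-1}\}$, count solutions to the resulting congruence, and invoke Lemma~\ref{lem:shortpoint}; the final $K,H,h$ choices and the exclusion of case~\eqref{eq:case11} via $N\ll\delta^2q^{1/2}$ are exactly the paper's.

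Two remarks, one of which is a real (small) gap. First, your step ``$\sum_\ell\min\{N,\|2b\ell/d\|^{-1}\}$ is controlled by $N\cdot I(2b)$'' with $I(2b)$ counting only $\|2b\ell/d\|\le 1/N$ is not correct without further work: the $\ell$ with $\|2b\ell/d\|\asymp 2^j/N$ for $j\ge1$ can dominate, and bounding them by $I(2b)$ alone would require a regularity input of the type $I_j\ll 2^j I_0$. You flag this yourself as a worry. The paper resolves it cleanly: partition by the dyadic size of $\|b\ell/d\|$ into sets $\cL(j)$, pick the worst $j$ by pigeonhole, and run Lemma~\ref{lem:shortpoint} with $h\asymp 2^jd/N$; conveniently, in the resulting bound $h/K$ the factor $2^j$ cancels, so the conclusion is uniform in $j$. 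You should do this dyadic pigeonhole explicitly rather than rely on the $j=0$ count. Second, the factor-of-$2$ complication (and the CRT bookkeeping for even $d$) is unnecessary: rather than keeping $2b$ as the coefficient, absorb the $2$ into the lattice variable $\ell=2mn_1$ (as the paper implicitly does); the congruence becomes $b\ell\equiv r\pmod d$ and Lemma~\ref{lem:shortpoint} directly delivers $b\equiv\ell_0\overline{s_0}\pmod d$ with no parity cases.
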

\begin{proof}
Let 
\begin{align*}
S=\sum_{n\in \Z}f(n)e(g(n)).
\end{align*}
By Lemma~\ref{lem:weyl3} and the assumptions~\eqref{eq:upassum} and~\eqref{eq:deltaconds} we have 
\begin{align*}
|S|^2\ll q^{o(1)}\sum_{\substack{1\le |m|\le dN/q_0 \\ (m,q_0)=1}}e\left(g\left(\frac{q_0m}{d}\right)\right)\sum_{n\in \Z}F_{d,m}(n)e(\rho_{q_0m/d}n)e_{d}(bmn^2).
\end{align*}
With notation as in~\eqref{eq:Sd1-1-1}
\begin{align*}
|S|^2\ll q^{o(1)}(S'+S_d),
\end{align*}
where 
\begin{align*}
S'=\sum_{\substack{1\le |m|< dY/q_0 \\ (m,q_0)=1}}e\left(g\left(\frac{q_0m}{d}\right)\right)\sum_{n\in \Z}F_{d,m}(n)e(\rho_{q_0m/d}n)e_{d}(bmn^2).
\end{align*}
Either 
\begin{align}
\label{eq:itercase11}
|S|^2\ll q^{o(1)}S',
\end{align}
or 
\begin{align}
\label{eq:itercase21}
|S|^2\ll q^{o(1)}S_d.
\end{align}
If~\eqref{eq:itercase21} then we obtain~\eqref{eq:itercase2}. Suppose next~\eqref{eq:itercase11}. Applying the Cauchy-Schwarz inequality
\begin{align*}
|S|^4&\ll \frac{dY}{q}\sum_{|m|< dY/q_0}\left|\sum_{n\in \Z}F_{d,m}(n)e(\rho_{q_0m/d}n)e_{d}(bmn^2) \right|^2 \\
&\ll\frac{dY}{q}\sum_{|m|< dY/q_0}\sum_{n_1,n_2\in \Z}F_{d,m}(n_1)F_{d,m}(n_2) \\ & \quad \quad \quad \quad \quad \quad \quad \quad \quad \times e_{q_0}(b m (n_1^2-n_2^2)+\rho_{q_0m/d}(n_1-n_2)).
\end{align*}
Applying  the change of variable $n_1\rightarrow n_1+n_2$ gives 
\begin{align*}
|S|^4\ll \frac{dY}{q}\sum_{|m|< dY/q_0}\sum_{|n_1|\le 2N}\left|\sum_{n_2\in \Z}f_0(n_2)e_{d}(2bmn_1 n_2)\right|,
\end{align*}
where 
\begin{align*}
f_0(n_2)=F_{d,m}(n_1+n_2)F_{d,m}(n_2).
\end{align*}
By~\eqref{eq:fconds1} and partial summation over $n_2$ 
\begin{align*}
|S|^4\ll \frac{dY}{q}\sum_{|m|< dY/q_0}\sum_{|n_1|\le 2N}\min\left\{N,\frac{1}{\|2bmn_1/d\|}\right\}.
\end{align*}
Using~\eqref{eq:upassum}, estimates for the divisor function and isolating the contribution from either $m,n_1=0$, we arrive at 
\begin{align*}
\delta^4(qN)^{1+o(1)}\ll \frac{dYN^{o(1)}}{q}\sum_{\substack{|\ell| \ll dNY/q}}\min\left\{N,\frac{1}{\|b\ell /d\|}\right\}+\frac{dYN^2}{q}.
\end{align*}
Since $dY/q\ll N$, by~\eqref{eq:Ncondsweyl11} this simplifies to
\begin{align}
\label{eq:DD2}
\frac{\delta^4q^{2+o(1)}N}{dY}\ll N^{o(1)}\sum_{\substack{|\ell| \ll dNY/q}}\min\left\{N,\frac{1}{\|b\ell /d\|}\right\}.
\end{align}
For integer $j$ define the set 
\begin{align*}
\cL(j)=\left\{ |\ell|  \ll dNY/q  \ : \  \frac{2^{j-1}-1}{N}\le  \left\| b\ell /d \right\|< \frac{2^{j}-1}{N} \right\},
\end{align*}
so that~\eqref{eq:DD2} implies 
\begin{align*}
\frac{\delta^4q^{2+o(1)}N}{dY}\ll N^{1+o(1)}\sum_{j\ll \log{N}}\frac{|\cL(j)|}{2^{j}}\ll \frac{N^{1+o(1)}|\cL(j)|}{2^{j}},
\end{align*}
for some $1\le j \ll \log{N}$. With suitable $o(1)$ terms we obtain 
\begin{align}
\label{eq:Lub1}
\frac{\delta^4 2^{j}q^2}{dY}\ll |\cL(j)|.
\end{align}
Note that $|\cL(j)|$ is bounded by the number of solutions to the congruence 
\begin{align*}
bn\equiv m \mod{d}, \quad |m|\ll \frac{2^{j}d}{N}, \quad |n|\ll \frac{dNY}{q},
\end{align*}
to which we may apply Lemma~\ref{lem:shortpoint}. By~\eqref{eq:Ncondsweyl11} we may suppose 
\begin{align*}
|\cL(j)|\gg \frac{2^{j}dY}{q},
\end{align*}
which implies case~\eqref{eq:case12} of Lemma~\ref{lem:shortpoint}. Using~\eqref{eq:Lub1} we apply Lemma~\ref{lem:shortpoint}
with
\begin{align*}
K=\frac{c_0\delta^4 2^{j}q^2}{dY}, \quad h=\frac{c_12^{j}d}{N}, \quad H=\frac{c_2dNY}{q},
\end{align*}
for suitable absolute constants $c_0,c_1,c_2$. We obtain integers $\ell_0,s_0$ satisfying 
\begin{align*}
b\equiv \ell_0 \overline{s_0} \mod{d}, \quad |\ell_0|\ll \frac{1}{\delta^4}\left(\frac{d}{q}\right)^2\frac{Y}{N}, \quad |s_0|\ll \frac{1}{\delta^4}\left(\frac{d}{q}\right)^2\frac{NY^2}{q},
\end{align*}
which completes the proof.
\end{proof}
\section{Duality for summation over Gauss sums}
In this section we estimate the sums occuring in~\eqref{eq:Sd1-1-1} of Lemma~\ref{lem:weyliter}. The procedure consists of Poission summation, evaluation of Gauss sums, reciprocity for modular inverses, Poission summation, evaluation of Gauss sums then quadratic reciprocity which we split into a number of stages. The main result of this section is Lemma~\ref{lem:weyl6}.

\begin{lemma}
\label{lem:weyl4}
Let notation and conditions be as in Lemma~\ref{lem:weyliter} and suppose $\ell,s$ are any integers satisfying 
\begin{align}
\label{eq:ellsdef}
b\equiv \ell \overline{s} \mod{d}, \quad \ell>0.
\end{align}
Define $t\in \Z$ by
\begin{align}
\label{eq:tq011}
sb=\ell +td,
\end{align}
and
\begin{align}
\label{eq:Mjdef}
M_d=\max\left\{\left(\frac{|s|q_0}{\ell d^2Y}\right)^{1/2},\frac{N}{d}\right\}.
\end{align} 
Let $\varepsilon>0$ be small and suppose 
\begin{align}
\label{eq:Mjdconds}
\ell dM_d^2N^{11\varepsilon}\ll 1.
\end{align}
There exists polynomials $g^{*},g^{**}$ of the form 
\begin{align}
\label{eq:g*def}
g^{*}(x)=\frac{am^3}{4q}+\frac{b_1dm^3}{4\ell q_0}+\gamma_1m,
\end{align}
\begin{align}
\label{eq:g**def}
g^{**}(x)=\frac{am^3}{4q}+\frac{b_2 dm^3}{4\ell q_0}+\gamma_2m,
\end{align}
with  $b_1,b_2\in \Z,\gamma_1,\gamma_2\in \R$ and some $s_1|s$ such that defining $s_2$ by 
\begin{align}
\label{eq:s2def11}
s_1s_2=s,
\end{align}
 and $S^{(1)}_d,S^{(2)}_d$ by
\begin{align}
\label{eq:Sd1}
S^{(1)}_d=\sum_{\substack{dY/q_0s_1\le |m|\le dN/q_0s_1 \\ (m,q_0)=1 \\ (m,s)=1}}e\left(g^{*}\left(\frac{q_0s_1m}{d}\right)\right) \frac{G(bs_1m,0;d)G\left(s_2\overline d,-\frac{s_1tq_0m^2}{d};\ell m\right)}{m\ell},
\end{align}
\begin{align}
\label{eq:Sd2}
S^{(2)}_d=\sum_{\substack{dY/q_0s_1\le |m|\le dN/q_0s_1 \\ (m,q_0)=1 \\ (m,s)=1}}e\left(g^{**}\left(\frac{q_0s_1m}{d}\right)\right) \frac{G(\overline {bm},\overline{bm};q)G\left(s_2\overline d,s_2\overline{d}-\frac{s_1tq_0m^2}{d};\ell m\right)}{ m\ell},
\end{align}
 we have 
\begin{align*}
S_{d}\ll \left(\frac{dM_dN^{13\varepsilon}}{N}\right)^2\left(1+\frac{q |s|}{\ell  N^2Y}\right)(S^{(1)}_d+S^{(2)}_d).
\end{align*}
\end{lemma}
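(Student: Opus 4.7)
The plan is to convert the inner sum over $n$ in~\eqref{eq:Sd1-1-1} via Poisson summation into a sum of complete Gauss sums $G(bm,k;d)$, evaluate these with Lemma~\ref{lem:gausssum1}, and then apply reciprocity for modular inverses to trade oscillation modulo the large modulus $d$ for oscillation modulo the much smaller modulus $\ell m$ together with a slowly varying analytic phase. The two sums $S^{(1)}_d$ and $S^{(2)}_d$ will arise from the even and odd parity cases of Lemma~\ref{lem:gausssum1}, while the factorisation $s=s_1s_2$ and the substitution $m\mapsto s_1 m$ are what produce the primitive Gauss sums $G(s_2\overline d,\cdot;\ell m)$ in the final expressions.

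First I would apply Lemma~\ref{lem:ps} to the inner sum, treating $f_1(n)=F_{d,m}(n)e(\rho_{q_0m/d}n)$ as the smooth function and $g(n)=e_d(bmn^2)$ as the $d$-periodic one, to obtain
\[
\frac{1}{d}\sum_{k\in\Z}\widehat{F_{d,m}}\!\left(\frac{k}{d}-\rho_{q_0m/d}\right)G(bm,k;d).
\]
Here $(bm,d)=1$ since $(a,q)=1$, $d\mid q_0$ and $(m,q_0)=1$, so Lemma~\ref{lem:gausssum1} applies. Splitting over the parity of $k$ yields two contributions, one proportional to $G(bm,0;d)$ (which will feed $S^{(1)}_d$) and one proportional to $G(\overline{bm},\overline{bm};d)$ (which will feed $S^{(2)}_d$), each multiplied by a phase $e_d(-\overline{bm}(k^2-\kappa)/4)$ with $\kappa\in\{0,1\}$.

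Next I would apply reciprocity. From $b\equiv \ell\overline{s}\mod{d}$ and the coprimality of $d$ with each of $\ell,m,s$ I have $\overline{bm}\equiv s\overline{\ell m}\mod{d}$, and standard reciprocity gives
\[
\frac{-s\overline{\ell m}}{d}\equiv\frac{s\overline d}{\ell m}-\frac{s}{d\ell m}\mod{1},
\]
so the phase modulo $d$ splits into a phase $e_{\ell m}(s\overline d k^2/4)$ and a slowly varying factor $e(-sk^2/(4d\ell m))$. I would then partition the outer sum over $m$ by $s_1=(m,s)$, write $m=s_1m'$ and $s=s_1s_2$, and relabel $m'\to m$ to enforce $(m,s)=1$, thereby producing a primitive Gauss sum $G(s_2\overline d,\cdot;\ell m)$ whose linear argument $-s_1tq_0m^2/d$ arises from combining $\rho_{q_0s_1m/d}$ with the definition~\eqref{eq:tq011} of $t$; the extra cubic phases $b_i dm^3/(4\ell q_0)$ appearing in $g^{*}$ and $g^{**}$ absorb the reciprocity residues left over from expanding $e(g(q_0s_1m/d))$.

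The main obstacle is controlling the residual smooth factor $\widehat{F_{d,m}}(k/d-\rho_{q_0m/d})\cdot e(-sk^2/(4d\ell m))$ and matching the prefactor. Since $F_{d,m}$ is smooth and supported on an interval of length $\ll N$, its Fourier transform localises $k/d-\rho_{q_0m/d}$ to a window of length $\ll N^{\varepsilon-1}$, hence concentrates $|k|$ on a range whose effective size is governed by $M_d$ as defined in~\eqref{eq:Mjdef}. The hypothesis~\eqref{eq:Mjdconds} is then precisely what is needed to ensure that $e(-sk^2/(4d\ell m))$ has derivative $\ll 1$ on this window, so that it can be stripped at cost $N^{o(1)}$ via Lemma~\ref{lem:partialsummation}. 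The factor $(dM_dN^{13\varepsilon}/N)^2$ then tallies the effective number of $k$'s per $m$ after the parity split and a Cauchy--Schwarz-type bookkeeping, while $(1+q|s|/(\ell N^2Y))$ records the regime when the second term in~\eqref{eq:Mjdef} dominates. Taking maxima over divisors $s_1\mid s$ and over the two parities of $k$ then yields $S_d\ll(dM_dN^{13\varepsilon}/N)^2(1+q|s|/(\ell N^2Y))(S^{(1)}_d+S^{(2)}_d)$, as claimed.
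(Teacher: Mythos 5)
Your proposal sets up the first half of the argument correctly: apply Poisson summation to the inner sum, split on the parity of the dual variable, evaluate the resulting Gauss sums $G(bm,k;d)$ via Lemma~\ref{lem:gausssum1}, and trade the inverse $\overline{bm}\equiv s\overline{\ell m}$ mod $d$ for a phase mod $\ell m$ plus a slowly varying analytic term via reciprocity. But the proof then has a genuine gap: after the reciprocity step one is left with an \emph{incomplete} sum over the dual variable $k$ of the form
\begin{align*}
\sum_{k}\widehat F_{d,m}\!\left(\tfrac{k}{d}-\rho_{q_0m/d}\right)e\!\left(-\frac{sk^2}{4d\ell m}\right)e_{\ell m}\!\left(s\overline d\,k^2/4\right),
\end{align*}
whereas the target expressions~\eqref{eq:Sd1} and~\eqref{eq:Sd2} contain the \emph{complete} Gauss sums $G(s_2\overline d,\cdot;\ell m)$. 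Your proposal asserts that the substitution $m\mapsto s_1m$ with $s=s_1s_2$ ``produces'' these Gauss sums, but relabelling $m$ has no effect on the $k$-sum; it only cleans up the coprimality condition. What is actually needed at this point — and what is absent from the proposal — is a \emph{second} application of Poisson summation (mod $\ell m$) to the $k$-sum. It is this second Poisson step that manufactures the complete Gauss sum $G(s_2\overline d,n;\ell m)$, and the hypothesis~\eqref{eq:Mjdconds} is then used not merely to control the derivative of the slowly varying phase, but to force the second dual variable $n$ to localise to the single value $n=s_1tq_0m^2/d$, which is precisely the linear argument appearing in~\eqref{eq:Sd1}--\eqref{eq:Sd2}.

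Related to this, the prefactor $(dM_dN^{13\varepsilon}/N)^2$ does not come from ``a Cauchy--Schwarz-type bookkeeping'': it arises from the volume of the $(y,z)$-window over which the Fourier transforms of the smooth cutoffs used to localise the $k$-sum are essentially supported, a step that only makes sense once the cutoffs have been inserted in preparation for the second Poisson summation. To repair the argument, after reciprocity you need to (i) multiply by a compactly supported smooth cutoff $h$ that is identically $1$ on the window where $\widehat F_{d,m}$ is concentrated, (ii) apply Lemma~\ref{lem:ps} a second time to produce $\sum_n \widehat{F^{(1)}}(\cdot)G(s_2\overline d,n;\ell m)$, (iii) invoke~\eqref{eq:Mjdconds} together with the rapid decay of the relevant Fourier transforms to show only $n=s_1tq_0m^2/d$ contributes, and (iv) strip the remaining smooth and slowly varying factors with Lemma~\ref{lem:partialsummation}; the factor $(1+q|s|/(\ell N^2Y))$ then records the derivative bound for that final stripping.
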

\begin{proof}
Recall from~\eqref{eq:Sd1-1-1} that
\begin{align*}
S_{d}=\sum_{\substack{dY/q_0\le |m|\le dN/q_0 \\ (m,q_0)=1}}e\left(g\left(\frac{q_0m}{d}\right)\right)\sum_{n\in \Z}F_{d,m}(n)e(\rho_{q_0m/d}n)e_{d}(bmn^2).
\end{align*}
With $s$ as in~\eqref{eq:ellsdef}, we partition summation over $m$ depending on the value of $(m,s)$ to get
\begin{align}
\label{eq:Sd1-1}
S_d=\sum_{s_1|s}S_{d,s_1},
\end{align} 
where 
\begin{align}
\label{eq:Sd1-}
S_{d,s_1}=\sum_{\substack{dY/q_0\le |m|\le dN/q_0 \\ (m,q_0)=1 \\ (m,s)=s_1}}e\left(g\left(\frac{q_0m}{d}\right)\right)\sum_{n\in \Z}F_{d,m}(n)e(\rho_{q_0m/d}n)e_{d}(bmn^2).
\end{align}
 Fix $s_1|s$ and consider $S_{d,s_1}$. We apply Lemma~\ref{lem:ps} to the  inner summation over $n$ to get
\begin{align*}
\sum_{n\in \Z}F_{d,m}(n)e_{d}(bmn^2)=\frac{1}{d}\sum_{n\in \Z}\widehat F_{d,m}\left(-\left(\frac{n}{d}+\rho_{q_0m/d}\right)\right)G(bm,n;d),
\end{align*}
after recalling the notation~\eqref{eq:Gdef}. Substituting the above into~\eqref{eq:Sd1-} gives 
\begin{align}
\label{eq:Sdspart}
S_{d,s_1}=S_{d,s_1,0}+S_{d,s_1,1},
\end{align}
where 
\begin{align*}
S_{d,s_1,i}&=\frac{1}{d}\sum_{\substack{dY/q_0\le |m|\le dN/q_0 \\ (m,q_0)=1 \\ (m,s)=s_1}}e\left(g\left(\frac{q_0m}{d}\right)\right) \\ & \quad \quad \quad \times \sum_{\substack{n\in \Z \\ n\equiv i \mod{2}}}\widehat F_{d,m}\left(-(\frac{n}{d}+\rho_{q_0m/d})\right)G(bm,n;d).
\end{align*}
We consider $S_{d,s_1,0}$ in detail then indicate necessary modifications for $S_{d,s_1,1}$. 
 By Lemma~\ref{lem:gausssum1}
\begin{align}
\label{eq:Sd077}
S_{d,s_1,0} & =\frac{1}{d}\sum_{\substack{dY/q_0\le |m|\le dN/q_0 \\ (m,q_0)=1 \\ (m,s)=s_1}}e\left(g\left(\frac{q_0m}{d}\right)\right)G(bm,0;d) \\ & \quad \quad \quad \quad \times \sum_{\substack{n\in \Z }}\widehat F_{d,m}\left(-\left(\frac{2n}{d}+\rho_{q_0m/d}\right)\right)e_d(-\overline{bm}n^2). \nonumber 
\end{align}
Recall $\ell,s$ satisfy~\eqref{eq:ellsdef}, so that
\begin{align*}
\frac{\overline{bm}n^2}{d}\equiv s \frac{\overline{\ell m}n^2}{d} \mod{1}.
\end{align*}
Using reciprocity for modular inverses 
\begin{align*}
\frac{\overline{\ell m}}{d}+\frac{\overline{d}}{\ell m}\equiv \frac{1}{\ell m d} \mod{1},
\end{align*}
the above implies
\begin{align}
\label{eq:modrec}
\frac{\overline{bm}n^2}{d}\equiv -\frac{s\overline{d}n^2}{\ell m}+\frac{sn^2}{\ell m d} \mod{1},
\end{align}
and hence 
\begin{align*}
&\sum_{\substack{n\in \Z }}\widehat F_{d,m}\left(-\left(\frac{2n}{d}+\rho_{q_0m/d}\right)\right)e_d(-\overline{bm}n^2) \\ & \quad \quad \quad \quad =\sum_{n\in \Z}\widehat F_{d,m}\left(-\left(\frac{2n}{d}+\rho_{q_0m/d}\right)\right)e\left(-\frac{sn^2}{\ell m d}\right)e_{\ell m}(s\overline d n^2).
\end{align*}
 Let $h$ be a smooth function satisfying 
\begin{align}
\label{eq:hdef1984}
\text{supp}(h)\subseteq [-2,2] \quad \text{and} \quad  h(x)=1 \quad \text{if} \quad |x|\le 1.
\end{align}
Recalling~\eqref{eq:Fdmdef}, we have 
\begin{align*}
\widehat F_{d,m}(x)=\int_{\R}f(q_0m/d+y)f(y)e(-yx).
\end{align*}
Repeated integration by parts shows that for any small $\varepsilon>0$ and large $C>0$ 
\begin{align}
\label{eq:FIparts}
\widehat F_{d,m}(x)\ll \frac{1}{N^{2C}x^2} \quad \text{provided} \quad  |x|\ge \frac{1}{N^{1-\varepsilon}}.
\end{align}
Since 
$$\widehat F_{d,m}(x)\ll N,$$
this implies
\begin{align*}
&\sum_{\substack{n\in \Z }}\widehat F_{d,m}\left(-\left(\frac{2n}{d}+\rho_{q_0m/d}\right)\right)e_d(-\overline{bm}n^2) \\ & =\sum_{|\frac{2n}{d}+\rho_{q_0m/d}|\le N^{-1+\varepsilon}}\widehat F_{d,m}\left(-\left(\frac{2n}{d}+\rho_{q_0m/d}\right)\right)e\left(-\frac{sn^2}{\ell m d}\right)e_{\ell m}(s\overline d n^2)+O\left(\frac{1}{N^{C}}\right).
\end{align*}
Using~\eqref{eq:hdef1984}
\begin{align*}
&\sum_{\substack{n\in \Z }}\widehat F_{d,m}\left(-\left(\frac{2n}{d}+\rho_{q_0m/d}\right)\right)e_d(-\overline{bm}n^2) = \\ & \sum_{|\frac{2n}{d}+\rho_{q_0m/d}|\le N^{-1+\varepsilon}}\widehat F_{d,m}\left(-\left(\frac{2n}{d}+\rho_{q_0m/d}\right)\right)  \\ & \quad \quad \quad \times h\left(\frac{N^{1-\varepsilon}}{d}(2n+d\rho_{q_0m/d}) \right)e\left(-\frac{sn^2}{\ell m d}\right)e_{\ell m}(s\overline d n^2)+O\left(\frac{1}{N^{C}}\right),
\end{align*}
and by another application of~\eqref{eq:FIparts}
\begin{align*}
&\sum_{\substack{n\in \Z }}\widehat F_{d,m}\left(-\left(\frac{2n}{d}+\rho_{q_0m/d}\right)\right)e_d(-\overline{bm}n^2)  \\ & \quad \quad \quad \quad =\sum_{n\in \Z}\widehat F_{d,m}\left(-\left(\frac{2n}{d}+\rho_{q_0m/d}\right)\right)h_m(n)e_{\ell m}(s\overline d n^2)+O\left(\frac{1}{N^{C}}\right),
\end{align*}
where 
\begin{align}
\label{eq:h0def}
h_m(x)=h\left(\frac{N^{1-\varepsilon}}{d}(2x+d\rho_{q_0m/d}) \right)e\left(-\frac{sx^2}{\ell m d}\right).
\end{align}
Define
\begin{align}
\label{eq:F1def}
F^{(1)}_{d,m}(x)=\widehat F_{d,m}\left(-\left(\frac{2x}{d}+\rho_{q_0m/d}\right)\right)h_m(x).
\end{align}
 By the above and~\eqref{eq:Sd077} 
\begin{align*}
 S_{d,s_1,0} &=\frac{1}{d}\sum_{\substack{dY/q_0\le |m|\le dN/q_0 \\ (m,q_0)=1 \\ (m,s)=s_1}}e\left(g\left(\frac{q_0m}{d}\right)\right)G(bm,0;d)\sum_{\substack{n\in \Z }}F^{(1)}_{d,m}(n)e_{\ell m}(s\overline d n^2) \\ & \quad \quad \quad \quad \quad +O\left(\frac{1}{N^{C-10}}\right) .
\end{align*}
Hence taking $C$ sufficiently large
\begin{align}
\label{eq:Sd0771} 
 S_{d,s_1,0}
&\ll\frac{1}{d}\sum_{\substack{dY/q_0s_1\le |m|\le dN/q_0s_1 \\ (m,q_0)=1 \\ (m,s)=1}}e\left(g\left(\frac{q_0s_1m}{d}\right)\right)G(bs_1m,0;d)\sum_{\substack{n\in \Z }}F^{(1)}_{d,s_1m}(n)e_{\ell m}(s_2\overline d n^2),
\end{align}
where $s_2$ is given by 
\begin{align}
\label{eq:s2def}
s_1s_2=s.
\end{align}
By Lemma~\ref{lem:ps}
\begin{align*}
\sum_{\substack{n\in \Z }}F^{(1)}_{d,s_1m}(n)e_{\ell m}(s_2\overline d n^2)=\frac{1}{\ell m}\sum_{n\in \Z}\widehat F^{(1)}_{d,s_1m}\left(-\frac{n}{\ell m}\right)G(s_2\overline d,n;\ell m).
\end{align*}
 Hence from~\eqref{eq:Sd0771}
\begin{align}
\label{eq:Sjds10-77}
\nonumber S_{d,s_1,0}&\ll \sum_{\substack{dY/q_0s_1\le |m|\le dN/q_0s_1 \\ (m,q_0)=1 \\ (m,s)=1}}e\left(g\left(\frac{q_0s_1m}{d}\right)\right)\frac{G(bs_1m,0;d)}{d\ell m} \\ & \quad \quad \quad \quad \times \sum_{n\in \Z}\widehat F^{(1)}_{d,s_1m}\left(-\frac{n}{\ell m}\right)G(s_2\overline d,n;\ell m).
% &=S^{(j)}_{d,s_1,0,0}+S^{(j)}_{d,s_1,0,1}+O\left(\frac{1}{N^{C}}\right),
\end{align}
%where 
%\begin{align*}
%S^{(j)}_{d,s_1,0,i}=&\frac{1}{d\ell m}\sum_{\substack{dY/q_0s_1\le |m|< qN/q_0s_1 \\ (m,q_0)=1 \\ (m,s)=1}}e\left(g\left(\frac{q_0s_1m}{d}\right)\right)G(as_1m,0;d) \\ & \quad \quad \quad \times \sum_{\substack{n\in \Z \\ n\equiv i \mod{2}}}\widehat F^{(1)}_{d,s_1m}\left(-\frac{n}{\ell m}\right)G(s_2\overline d,n;\ell m).
%\end{align*}
%We consider $S^{(j)}_{d,s_1,0,0}$ in detail then indicate neccessary modifications for $S^{(j)}_{d,s_1,0,1}$.  By Lemma~\ref{lem:gausssum1}
%\begin{align*}
%&\sum_{\substack{n\in \Z \\ n\equiv 0 \mod{2}}}\widehat F^{(1)}_{d,s_1m}\left(-\frac{n}{\ell m}\right)G(s_2\overline d,n;\ell m) \\ 
%& \quad =G(s_2\overline d,0;\ell m)\sum_{\substack{n\in \Z}}\widehat F^{(1)}_{d,s_1m}\left(-\frac{2n}{\ell m}\right)e_{\ell m}\left(-\overline{s_2}d n^2 \right),
%\end{align*}
%which implies
%\begin{align}
%\label{eq:Sjd00}
%S^{(j)}_{d,s_1,0,0}=&\sum_{\substack{dY/q_0s_1\le |m|< qN/q_0s_1 \\ (m,q_0)=1 \\ (m,s_1)=1}}e\left(g\left(\frac{q_0s_1m}{d}\right)\right)\frac{G(am,0;d)G(s_2\overline d,0;\ell m)}{md\ell} \\ & \quad \quad \quad \times \sum_{\substack{n\in \Z}}\widehat F^{(1)}_{d,s_1m}\left(-\frac{2n}{\ell m}\right)e_{\ell m}\left(-\overline{s_2}d n^2 \right). \nonumber
%\end{align}
Consider the inner summation over $n$. Recall~\eqref{eq:F1def} and write 

\begin{align*}
F^{(1)}_{d,s_1m}(x)=F_2(x)h_{s_1m}(x),
\end{align*}
with 
\begin{align}
\label{eq:F2def}
F_2(x)=\widehat F_{d,s_1m}\left(-\left(\frac{2x}{d}+\rho_{q_0s_1m/d}\right)\right).
\end{align}
By Lemma~\ref{lem:conv}
\begin{align}
\label{eq:F1hat123456789}
\widehat F^{(1)}_{d,s_1m}(x)=(\widehat F_2*\widehat h_{s_1m})(x)=\int_{\R}\widehat h_{s_1m}(y)\widehat F_2(x-y)dy,
\end{align}
and from~\eqref{eq:h0def} 
\begin{align*}
\nonumber \widehat h_{s_1m}(y)&=\int_{\R}h\left(\frac{N^{1-\varepsilon}}{d}(2z+d\rho_{q_0s_1m/d}) \right)e\left(-\beta_m z^2-yz\right)dz \\
&\ll \int_{\R}h\left(\frac{2N^{1-\varepsilon}z}{d}\right)e\left(-\beta_m z^2-(y-\beta_m d \rho_{q_0s_1m/d})z\right)dz,
\end{align*}
where 
\begin{align}
\label{eq:betadef}
\beta_m =\frac{s_2}{\ell m d}.
\end{align}
If  
$$|m|\ge\frac{dY}{q_0s_1} ,$$
 then 
\begin{align}
\label{eq:betambound}
|\beta_m|\ll \frac{|s|q_0}{\ell d^2Y}.
\end{align}
Repeated integration by parts shows that for any integer $k$
\begin{align*}
&\int_{\R}h\left(\frac{N^{1-\varepsilon}z}{d}\right)e\left(-\beta_m z^2-(y-\beta_m d \rho_{q_0m/d})z\right)dz \\ & \quad \quad \quad \quad \quad \ll \frac{1}{|y-\beta_m d \rho_{q_0m/d}|^k}\int_{\R}\left|\frac{d^{k}\left\{h\left(\frac{2N^{1-\varepsilon}z}{d}\right)e(-\beta_m z^2)\right\}}{dz^{k}} \right|dz,
\end{align*}
which combined with~\eqref{eq:hdef1984} implies
\begin{align*}
&\int_{\R}h\left(\frac{2N^{1-\varepsilon}z}{d}\right)e\left(-\beta_m z^2-(y-\beta_m d \rho_{q_0m/d})z\right)dz \\ & \quad \quad \quad \quad \ll \frac{1}{|y-\beta_m d \rho_{q_0m/d}|^k}\frac{d^{k+1}}{N^{(1-\varepsilon)(k+1)}}\left(|\beta_m|^{k}+\frac{N^{2k(1-\varepsilon)}}{d^{2k}} \right).
\end{align*}
Let $M_d$ be given by~\eqref{eq:Mjdef}. From~\eqref{eq:betambound} we obtain
\begin{align*}
&\int_{\R}h\left(\frac{N^{1-\varepsilon}z}{d}\right)e\left(-\beta_m z^2-(y-\beta_m d \rho_{q_0m/d})z\right)dz \\ & \quad \quad \quad \quad \ll \frac{N^{2k\varepsilon}}{|y-\beta_m d \rho_{q_0m/d}|^k}\frac{d}{N}\left(\frac{dM^2_{d}}{N}\right)^{k}.
\end{align*}
If 
\begin{align*}
|y-\beta_m d \rho_{q_0m/d}|\ge \frac{dM_d^2N^{10\varepsilon}}{N},
\end{align*}
then for any $C>0$, by choosing $k$ large enough in terms of $\varepsilon$, the above implies
\begin{align*}
\widehat h_{s_1m}(y)\ll \frac{1}{|y-\beta_m d \rho_{q_0m/d}|^2}\frac{1}{N^{2C}}.
\end{align*}
Substituting into~\eqref{eq:F1hat123456789} gives
\begin{align}
\widehat F^{(1)}_{d,s_1m}(x)&=\int_{|y|\le dM_d^2N^{10\varepsilon}/N}\widehat h_{s_1m}(y+\beta_m d \rho_{q_0s_1m/d})\widehat F_2(x-y-\beta_m d \rho_{q_0s_1m/d})dy \\ & \quad \quad \quad+O\left(\frac{1}{N^{C}}\right). \nonumber
\end{align} 
Combining the above with~\eqref{eq:Sjds10-77} and choosing $C$ sufficiently large 
\begin{align}
\label{eq:Sds11-56}
\nonumber & S_{d,s_1,0}\ll\int_{|y|\le dM_d^2N^{10\varepsilon}/N} \\ & \sum_{\substack{dY/q_0s_1\le |m|< qN/q_0s_1 \\ (m,q_0)=1 \\ (m,s)=1}}e\left(g\left(\frac{q_0s_1m}{d}\right)\right)\frac{G(b s_1 m,0;d)}{ md\ell}\widehat h_{s_1m}(y+\beta_m d \rho_{q_0s_1m/d}) \\ & \quad \quad \quad \times \sum_{\substack{n\in \Z}}\widehat F_2\left(\frac{n}{\ell m}-y-\beta_m d \rho_{q_0s_1m/d}\right) G(s_2\overline d,-n;\ell m) dy. \nonumber
\end{align}
Recalling~\eqref{eq:h0def}, we have 
\begin{align*}
& \widehat h_{s_1m}(y+\beta_m d \rho_{q_0s_1m/d})= \int_{\R}h\left(\frac{2zN^{1-\varepsilon}}{d}\right) \\ & \quad \quad \quad \times e\left(-\beta_m(z-d\rho_{q_0s_1m/d}/2)^2-(y+\beta_m d \rho_{q_0s_1m/d})(z-d\rho_{q_0s_1 m/d}/2)\right)dz.
\end{align*}
Substituting the above into~\eqref{eq:Sds11-56}, taking a maximum over $y,z$ and using that $h$ is supported in $[-2,2]$ we get
\begin{align}
\label{eq:Sjds10-76}
& \nonumber S_{d,s_1,0}\ll \left(\frac{dM_dN^{10\varepsilon}}{N}\right)^2 \\ & \quad \quad \quad \quad \times \sum_{\substack{dY/q_0s_1\le |m|\le dN/q_0s_1 \\ (m,q_0)=1 \\ (m,s)=1}}e\left(g\left(\frac{q_0s_1m}{d}\right)+g_0(m)\right)\frac{G(bs_1m,0;d)}{md\ell} \\ 
& \quad \quad \times \sum_{\substack{n\in \Z}}\widehat F_2\left(\frac{n}{\ell m}-y-\beta_m d \rho_{q_0s_1m/d}\right) G(s_2\overline d,-n;\ell m), \nonumber
\end{align}
for some $y,z$ satisfying 
\begin{align}
\label{eq:yzconds}
 |y|\le \frac{dM_d^2N^{10\varepsilon}}{N} \quad |z|\le \frac{dN^{\varepsilon}}{N},
\end{align}
and 
\begin{align}
\label{eq:g0def}
g_0(m)=-\beta_m(z-d\rho_{q_0s_1m/d}/2)^2-(y+\beta_m d \rho_{q_0ms_1/d})(z-d\rho_{q_0 ms_1/d}/2).
\end{align}
We next simplify summation over $n$. Recalling~\eqref{eq:F2def}, for any real number $y$ we have 
\begin{align*}
\widehat F_2\left(2y/d\right)&=\int_{\R}\widehat F_{d,s_1m}\left(-\left(\frac{2x}{d}+\rho_{q_0s_1m/d}\right)\right)e(-2xy/d)dx \\
&=\frac{d}{2}e(\rho_{q_0s_1m/d}y)\int_{\R}\widehat F_{d,s_1m}(u)e\left(uy \right)du.
\end{align*}
Hence by Lemma~\ref{lem:fourierinversion}
\begin{align}
\label{eq:Finv-F2}
\widehat F_2\left(2y/d\right)=\frac{d}{2}e(\rho_{q_0s_1m/d}y)F_{d,s_1m}(y).
\end{align}
Substituting~\eqref{eq:Finv-F2} into~\eqref{eq:Sjds10-76} gives
\begin{align}
\label{eq:above77}
\nonumber & \sum_{\substack{n\in \Z}}\widehat F_2\left(\frac{n}{\ell m}-y-\beta_m d \rho_{q_0s_1m/d}\right) G(s_2\overline d,-n;\ell m) \\ & \quad \quad \quad = \frac{d}{2}e\left(-\rho_{q_0s_1m/d}\left(\frac{dy}{2}+\frac{\beta_m d^2 \rho_{q_0s_1m/d}}{2}\right)\right) \\ & \quad \quad \times \sum_{\substack{n\in \Z}}F_{d,s_1m}\left(\frac{dn}{2\ell m}-\frac{dy}{2}-\frac{d^2 \beta_m \rho_{q_0 s_1m/d}}{2} \right)G(s_2\overline d,-n;\ell m)e\left(\rho_{q_0s_1m/d}\frac{dn}{2\ell m}\right). \nonumber
\end{align}
We next show only one value of $n$ contributes to summation in~\eqref{eq:above77}. Recalling~\eqref{eq:fconds1} and~\eqref{eq:Fdmdef}  if 
$$F_{d,s_1m}\left(\frac{dn}{2\ell m}-\frac{dy}{2}-\frac{d^2 \beta_m \rho_{q_0 m/d}}{2} \right)\neq 0,$$
then 
\begin{align}
\label{eq:sumconditions}
N\le \frac{dn}{2\ell m}-\frac{dy}{2}-\frac{d^2 \beta_m \rho_{q_0 s_1m/d}}{2}\le 2N.
\end{align}
By~\eqref{eq:yzconds}
\begin{align*}
dy\ll \frac{d^2M_d^2N^{10\varepsilon}}{N},
\end{align*}
so that if $n$ satisfies~\eqref{eq:sumconditions} then
\begin{align*}
\left|\frac{dn}{\ell m}-d^2 \beta_m \rho_{q_0s_1m/d}\right|\ll N+\frac{d^2M_d^2N^{10\varepsilon}}{N}\ll \frac{d^2M_d^2N^{10\varepsilon}}{N}.
\end{align*}
By~\eqref{eq:deltarhom},~\eqref{eq:s2def},~\eqref{eq:betadef} and using that $m\ll N$

\begin{align}
\label{eq:2nsteplast}
\left|n-\frac{s_1sbq_0 m^2}{d^2}\right|\ll \ell dM_d^2N^{10\varepsilon}.
\end{align}
Recalling~\eqref{eq:tq011}, for some integer $t$ we have  
\begin{align*}
sb=\ell +td,
\end{align*}
which substituted into~\eqref{eq:2nsteplast} 
 gives
\begin{align}
\label{eq:2nstep1}
\left|n-\frac{s_1tq_0}{d}m^2\right|\ll \ell \left(dM_d^2N^{10\varepsilon}+\frac{s_1q_0 m^2}{d^2} \right).
\end{align} 
Using that $m\ll dN/q_0s_1,$ we have 
\begin{align*}
\frac{s_1q_0 m^2}{d^2}\ll \frac{N^2}{q}\ll dM_d^2N^{10\varepsilon},
\end{align*}
which simplifies~\eqref{eq:2nstep1} to
\begin{align*}
\left|n-\frac{s_1tq_0}{d}m^2\right|\ll \ell dM_d^2N^{10\varepsilon}.
\end{align*} 
By~\eqref{eq:Mjdconds}, the only term which contributes to summation in~\eqref{eq:above77}  is
\begin{align*}
n=\frac{s_1tq_0}{d}m^2.
\end{align*}
This implies that
\begin{align*}
 & \sum_{\substack{n\in \Z}}\widehat F_2\left(\frac{n}{\ell m}-y-\beta_m d \rho_{q_0s_1m/d}\right) G(s_2\overline d,-n;\ell m) \\ & \quad \quad \quad = \frac{d}{2}e\left(-\rho_{q_0s_1m/d}\left(\frac{dy}{2}+\frac{\beta_m d^2 \rho_{q_0s_1m/d}}{2}\right)\right)F_{d,s_1m}\left(-\frac{ s_1q_0 m}{2d}-\frac{dy}{2} \right) \\ & \quad \quad \quad \quad \quad \times G\left(s_2\overline d,-\frac{s_1tq_0m^2}{d};\ell m\right)e\left(\rho_{q_0s_1m/d}\frac{s_1 t q_0 m}{2\ell }\right). 
\end{align*}
We subsitute the above into~\eqref{eq:Sjds10-76} then simplify. This gives 
\begin{align*}
& S_{d,s_1,0}\ll \left(\frac{dM_dN^{10\varepsilon}}{N}\right)^2 \\ &  \times \sum_{\substack{dY/q_0s_1\le |m|\le dN/q_0s_1 \\ (m,q_0)=1 \\ (m,s)=1}}F_{d,s_1m}\left(-\frac{s_1q_0 m}{2d}-\frac{dy}{2} \right)e\left(g\left(\frac{q_0s_1m}{d}\right)+g_0(m)+g_1(m)\right) \\ & \quad \quad \quad \quad \quad \times \frac{G(bs_1m,0;d)G\left(s_2\overline d,-\frac{s_1tq_0m^2}{d};\ell m\right)}{m\ell},
\end{align*}
where 
\begin{align*}
g_1(m)=-\rho_{q_0s_1m/d}\left(\frac{dy}{2}+\frac{\beta_m d^2 \rho_{q_0s_1m/d}}{2}\right)+\rho_{q_0s_1m/d}\frac{s_1 t q_0 m}{2\ell }.
\end{align*}
Recalling~\eqref{eq:deltarhom},~\eqref{eq:betadef} and~\eqref{eq:g0def} 
\begin{align*}
g_0(m)+g_1(m)&=-\beta_m(z-d\rho_{q_0s_1m/d}/2)^2-(y+\beta_m d \rho_{q_0ms_1/d})(z-d\rho_{q_0 ms_1/d}/2) \\ 
& -\rho_{q_0s_1m/d}\left(\frac{dy}{2}+\frac{\beta_m d^2 \rho_{q_0s_1m/d}}{2}\right)+\rho_{q_0s_1m/d}\frac{s_1 t q_0 m}{2\ell } \\
&=-\beta_mz^2-\frac{\beta_md^2\rho_{q_0s_1m/d}^2}{4}+\frac{\rho_{q_0s_1m/d}s_1tq_0 m}{2\ell}-yz \\
&=-\frac{z^2s_2}{\ell m d}-\frac{sb^2}{4\ell q_0}\left(\frac{q_0 s_1m}{d}\right)^3+\frac{b t d}{2\ell q_0}\left(\frac{q_0 s_1 m}{d}\right)^3-yz.
\end{align*}
 Recalling~\eqref{eq:gpoly},~\eqref{eq:q0delta1},~\eqref{eq:bdef} and~\eqref{eq:tq011} we have 
\begin{align*}
& g\left(\frac{q_0s_1m}{d}\right)+g_0(m)+g_1(m) \\ &=\frac{a}{4q}\left(\frac{q_0s_1m}{d}\right)^3+\frac{b_1d}{4\ell q}\left(\frac{q_0 s_1 m}{d}\right)^3+\gamma \frac{q_0s_1m}{d}-\frac{z^2s_2}{\ell m d},
\end{align*}
for some $b_1\in \Z$. Hence with $g^{*}$ given by
\begin{align*}
g^{*}(x)=\frac{am^3}{4q}+\frac{b_1d m^3}{4\ell q_0}+\gamma m,
\end{align*}
we have 
\begin{align*}
& S_{d,s_1,0}\ll \left(\frac{dM_dN^{10\varepsilon}}{N}\right)^2 \\ &  \times \sum_{\substack{dY/q_0s_1\le |m|\le dN/q_0s_1 \\ (m,q_0)=1 \\ (m,s)=1}}F_{d,s_1m}\left(-\frac{s_1q_0 m}{2d}-\frac{dy}{2} \right)e\left(g^{*}\left(\frac{q_0s_1m}{d}\right)-\frac{z^2s_2}{\ell m d}\right) \\ & \quad \quad \quad \quad \quad \times \frac{G(bs_1m,0;d)G\left(s_2\overline d,-\frac{s_1tq_0m^2}{d};\ell m\right)}{m\ell}.
\end{align*}
Our last step is to remove the terms $F_{d,s_1m}$ and $e(-z^2s_2/(\ell m d))$ using partial summation. Recalling~\eqref{eq:Fdmdef}
\begin{align*}
F_{d,s_1m}\left(-\frac{s_1q_0 m}{2d}-\frac{dy}{2}\right)=f\left(\frac{q_0s_1m}{2d}-\frac{dy}{2}\right)f\left(-\frac{q_0 s_1 m}{2d}-\frac{dy}{2}\right),
\end{align*}
hence from~\eqref{eq:fconds1} if $m$ satisfies 
\begin{align*}
dY/q_0s_1\le |m|\le dN/q_0s_1,
\end{align*}
then 
\begin{align*}
\frac{dF_{d,s_1m}\left(-\frac{q_0 s_1 m}{2d}-\frac{dy}{2}\right)}{dm}\ll \frac{1}{m}.
\end{align*}
By~\eqref{eq:yzconds}, for $m$ satisfying 
\begin{align*}
|m|\ge dY/q_0s_1,
\end{align*}
we have 
\begin{align*}
\frac{d e\left(-\frac{z^2s_2}{\ell m d}\right)}{dm}\ll \frac{N^{2\varepsilon}q_0 |s|}{\ell N^2Y}\frac{1}{m}.
\end{align*}
Hence by Lemma~\ref{lem:partialsummation}, for some $\gamma''\in [0,1]$ we have 
\begin{align}
\label{eq:Sds10-final}
\nonumber & S_{d,s_1,0}\ll \left(\frac{dM_dN^{13\varepsilon}}{N}\right)^2\left(1+\frac{q_0 |s|}{\ell  N^2Y}\right) \\ &  \sum_{\substack{dY/q_0s_1\le |m|\le dN/q_0s_1 \\ (m,q_0)=1 \\ (m,s)=1}}e\left(g^{*}\left(\frac{q_0s_1m}{d}\right)+\gamma'' m\right) \frac{G(bs_1m,0;d)G\left(s_2\overline d,-\frac{s_1tq_0m^2}{d};\ell m\right)}{m\ell}.
\end{align}
Returning to~\eqref{eq:Sdspart}, we next indicate the neccessary modifications to the above argument to estimate $S_{d,s_1,1}$. We have 
\begin{align*}
S_{d,s_1,1}&=\frac{1}{d}\sum_{\substack{dY/q_0\le |m|\le dN/q_0 \\ (m,q_0)=1 \\ (m,s)=s_1}}e\left(g\left(\frac{q_0m}{d}\right)\right) \\ & \quad \quad \quad \sum_{\substack{n\in \Z}}\widehat F_{d,m}\left(-\left(\frac{2n+1}{d}+\rho_{q_0m/d}\right)\right)G(bm,2n+1;d),
\end{align*}
and hence by Lemma~\ref{lem:gausssum1}
\begin{align*}
S_{d,s_1,1}&=\frac{1}{d}\sum_{\substack{dY/q_0\le |m|\le dN/q_0 \\ (m,q_0)=1 \\ (m,s)=s_1}}e\left(g\left(\frac{q_0m}{d}\right)\right)G(\overline {bm},\overline{bm};q) \\ & \quad \quad \quad \sum_{\substack{n\in \Z}}\widehat F_{d,m}\left(-\left(\frac{2n+1}{d}+\rho_{q_0m/d}\right)\right)e_d(-\overline{bm}(n^2+n)).
\end{align*}
Using reciprocity as in~\eqref{eq:modrec} gives
\begin{align*}
&\sum_{\substack{n\in \Z}}\widehat F_{d,m}\left(-\left(\frac{2n+1}{d}+\rho_{q_0m/d}\right)\right)e_d(-\overline{bm}(n^2+n))= \\ & \quad \quad   \sum_{\substack{n\in \Z}}\widehat F_{d,m}\left(-\left(\frac{2n+1}{d}+\rho_{q_0m/d}\right)\right)e_{\ell m}(s\overline{d}(n^2+n))e\left( -\frac{s (n^2+n)}{\ell m d}\right).
\end{align*}
Following the argument as in the case $S_{d,s_1,0}$ with changes to~\eqref{eq:h0def} and~\eqref{eq:F1def} given by
$$
h_m(x)=h\left(\frac{N^{1-\varepsilon}}{d}(2x+1+d\rho_{q_0m/d}) \right)e\left(-\frac{s(x^2+x)}{\ell m d}\right),
$$
$$
F^{(1)}_{d,m}(x)=\widehat F_{d,m}\left(-\left(\frac{2x+1}{d}+\rho_{q_0m/d}\right)\right)h_0(x),
$$
we arrive at an analouge of~\eqref{eq:Sds11-56}
\begin{align*}
& S_{d,s_1,1}\ll\int_{|y|\le dM_d^2N^{10\varepsilon}/N} \\ & \sum_{\substack{dY/q_0s_1\le |m|< qN/q_0s_1 \\ (m,q_0)=1 \\ (m,s)=1}}e\left(g\left(\frac{q_0s_1m}{d}\right)\right)\frac{G(\overline{bs_1m},\overline{bs_1m};d)}{md\ell}\widehat h_{s_1m}(y+\beta_m d \rho_{q_0s_1m/d}) \\ & \quad \quad \quad \times \sum_{\substack{n\in \Z}}\widehat F_2\left(\frac{n}{\ell m}-y-\beta_m d \rho_{q_0s_1m/d}\right) G(s_2\overline d,s_2\overline{d}-n;\ell m) dy,
\end{align*}
where 
\begin{align*}
F_2(x)=\widehat F_{d,s_1m}\left(-\left(\frac{2x+1}{d}+\rho_{q_0s_1m/d}\right)\right).
\end{align*}
This modifies~\eqref{eq:Finv-F2} to 
\begin{align*}
\widehat F_2\left(\frac{2y}{d}\right)=\frac{d}{2}e\left(\left(\rho_{q_0s_1m/d}+\frac{1}{d}\right)y \right)F_{d,s_1m}(y),
\end{align*}
and the rest of the proof is similar to before. Our variant of~\eqref{eq:Sds10-final} becomes 
\begin{align}
\label{eq:Sds11-final}
\nonumber & S_{d,s_1,1}\ll \left(\frac{dM_dN^{13\varepsilon}}{N}\right)^2\left(1 +\frac{q_0 s}{\ell  N^2Y}\right) \\ &  \sum_{\substack{dY/q_0s_1\le |m|\le dN/q_0s_1 \\ (m,q_0)=1 \\ (m,s)=1}}e\left(g^{**}\left(\frac{q_0s_1m}{d}\right)+\gamma''' m\right) \frac{G(\overline {bm},\overline{bm};q)G\left(s_2\overline d,s_2\overline{d}-\frac{s_1tq_0m^2}{d};\ell m\right)}{m\ell},
\end{align}
where $g^{**}$ is a polynomial of the form 
\begin{align*}
g^{**}(x)=\frac{am^3}{4q}+\frac{b_2 dm^3}{4\ell q_0}+\gamma_2 m,
\end{align*}
 for suitable $b_2 \in \Z$ and $\gamma_2\in \R$.

Combining~\eqref{eq:Sds10-final},~\eqref{eq:Sds11-final} and~\eqref{eq:Sdspart} then taking a maximum over $s_1|s$ in~\eqref{eq:Sd1-1} we complete the proof, after suitable renaming.
\end{proof}
Our next step is to use results from Section~\ref{sec:gauss} to simplify the sums $S_d^{(i)}$ occuring in Lemma~\ref{lem:weyl4}. We first consider $S^{(1)}_d$.
\begin{lemma}
\label{lem:weylS1}
With notation and conditions as in Lemma~\ref{lem:weyl4}, there exists a positive integer $j$ and a real number $\gamma_0$ such that
\begin{align*}
& S^{(1)}_{d}\ll \frac{N^{o(1)}(d\ell)^{1/2}}{2^{j/2}}\sum_{\substack{dY/2^{j}q_0s_1\le m\le dN/2^{j}q_0s_1 \\ (m,sq_0)=1  }}\frac{1}{m^{1/2}}e\left(\frac{1}{4}g\left(\frac{2^{j}q_0s_1m}{d}\right)+\gamma_0 m\right).
\end{align*}

\end{lemma}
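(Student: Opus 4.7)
The plan is to explicitly evaluate the two Gauss sums appearing in $S^{(1)}_d$, combine the resulting phases with $g^{*}$ to reconstruct $\tfrac14 g$, apply quadratic reciprocity to dispose of Jacobi symbols involving $m$, and finally use partial summation together with a dyadic decomposition to remove the $1/m$ and Gauss-sum size factors.

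First I would evaluate $G(bs_1m,0;d)$ via a case analysis on $d\bmod 4$ using Lemmas~\ref{lem:13}, \ref{lem:2}, \ref{lem:4}; the case $d\equiv 2\pmod 4$ contributes nothing by Lemma~\ref{lem:2}, and in the odd case one gets $\epsilon_d\left(\tfrac{bs_1m}{d}\right) d^{1/2}$. For the second Gauss sum $G\!\left(s_2\overline d,-\tfrac{s_1tq_0m^2}{d};\ell m\right)$, I would apply Lemma~\ref{lem:gausssum2} (or Lemma~\ref{lem:gausssum1} if $\ell m$ is even, combined with Lemma~\ref{lem:gaussmult} to split off the 2-adic part) to complete the square. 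This produces a Gauss sum $G(s_2\overline d,0;\ell m)$ of magnitude $(\ell m)^{1/2}$ and a phase $e_{\ell m}\!\bigl(-\overline{4s_2\overline d}(s_1tq_0m^2/d)^2\bigr)$; using the identity $sb=\ell+td$ from \eqref{eq:tq011} and reducing mod $1$ as in \eqref{eq:modrec}, this phase simplifies to a cubic term of the shape $\tfrac{cm^3}{\ell q}$ plus lower order contributions.

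Next I would check that the combined cubic phase, i.e.\ $g^{*}(q_0s_1m/d)$ plus the contribution from completing the square, collapses to $\tfrac14 g(q_0s_1m/d)$ up to a term linear in $m$ that is absorbed into $\gamma_0$. The whole point of the extra summand $\tfrac{b_1dx^3}{4\ell q_0}$ in the definition \eqref{eq:g*def} of $g^{*}$ is precisely to cancel against the square-completion phase, so the bookkeeping here should come out cleanly given the definitions in \eqref{eq:deltarhom}, \eqref{eq:tq011}. After this, I would apply quadratic reciprocity (Lemma~\ref{lem:qreciprocity}, with Lemma~\ref{lem:jacobieven} for the $2$-adic factors) to the product $\left(\tfrac{bs_1m}{d}\right)\left(\tfrac{s_2\overline d}{\ell m}\right)$, splitting it into factors of the form $\left(\tfrac{d}{m}\right)$ etc., so that $m$ appears only in numerators of Jacobi symbols. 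On odd residue classes of $m$ modulo a fixed (small) modulus this is a Dirichlet character $\chi(m)$, which by Lemma~\ref{lem:partialsummation} may be absorbed into a phase $e(\gamma_0 m)$ at the cost of a factor $N^{o(1)}$.

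The factor $2^j$ and the dyadic-looking range arise from isolating the $2$-adic valuation of $m$ (write $m=2^jm'$ with $m'$ odd and pigeonhole on $j$): this is exactly what is needed both to clean up the Jacobi symbols in the reciprocity step and to match the Gauss sum evaluations above, which required parity assumptions. After the substitution $m\mapsto 2^jm$, the interval length and the phase $\tfrac14 g(2^jq_0s_1m/d)$ line up with the target form, while the $(d\ell m)^{1/2}/m\ell$ combined factor gives the claimed $(d\ell)^{1/2}/(2^{j/2}m^{1/2})$ up to $N^{o(1)}$. The \textbf{main obstacle} I expect is not any one step but rather the careful tracking of the mixed-parity Gauss sum evaluations and the verification that the various cubic phases truly combine to $\tfrac14 g$ with no residual $m^3$-terms of unexpected type; the quadratic reciprocity application is mechanical but produces several cases whose signs and Jacobi-symbol factors must be reconciled before partial summation can collapse them into a single $e(\gamma_0m)$.
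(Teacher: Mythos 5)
Your high-level plan matches the paper's proof closely: partition $m$ by sign and $2$-adic valuation, evaluate $G(bs_1m,0;d)$ by cases on $d\bmod 4$ (Lemmas~\ref{lem:2},~\ref{lem:4},~\ref{lem:13}), complete the square in the second Gauss sum using Lemma~\ref{lem:gausssum1}/Lemma~\ref{lem:gausssum2} (splitting off the $2$-adic part when $\ell m$ is even), then evaluate the resulting $G(\cdot,0;\cdot)$ and apply quadratic reciprocity to convert the Jacobi symbols so that $m$ appears only through bounded, small-period factors. These are exactly the steps in the paper.

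However, two details of your mechanism are off, and both are the kind of thing that would stall you if you tried to write this out. First, the residual cubic phase does \emph{not} cancel: after substituting $x=2^jq_0s_1m/d$ into $g^{*}$, the term $\tfrac{b_1dx^3}{4\ell q_0}$ becomes $\tfrac{c_1m^3}{4\ell}$ for some integer $c_1$, and the square-completion of the second Gauss sum (via $\overline{bm}/d\equiv -s\overline d/(\ell m)+\cdots$ and $sb=\ell+td$) contributes another term $\tfrac{b_3m^3}{\ell}$. Neither of these equals $-$the other; rather the paper removes both simultaneously by partitioning $m$ into residue classes modulo $4\ell$, on each of which $e(c_1m^3/4\ell+b_3m^3/\ell)$ is constant, and then completing the arithmetic progression with Lemma~\ref{lem:apcomp}. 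Second, you propose to absorb the residual Jacobi-symbol/character-type factor $\kappa_m$ via Lemma~\ref{lem:partialsummation}. That lemma applies to a slowly varying smooth amplitude $f$ with $f'\ll\Omega/n$; a nontrivial periodic character is not of this type (its ``derivative'' is not small). What the paper actually does is observe that $\kappa_m$ has small period, lumps it into the same residue-class decomposition mod $4\ell$ used for the cubic phases, pigeonholes to a single class, and then invokes Lemma~\ref{lem:apcomp} — not Lemma~\ref{lem:partialsummation} — to re-complete the range while producing the linear phase $e(\gamma_0m)$ and the $N^{o(1)}$ loss. So the ``absorb the character by partial summation'' step as stated would not go through; you need the arithmetic-progression completion device.
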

\begin{proof}
First recall~\eqref{eq:Sd1}
\begin{align*}
\nonumber S^{(1)}_d&=\sum_{\substack{dY/q_0s_1\le |m|\le dN/q_0s_1 \\ (m,q_0)=1 \\ (m,s)=1}}e\left(g^{*}\left(\frac{q_0s_1m}{d}\right)\right) \frac{G(bs_1m,0;d)G\left(s_2\overline d,-\frac{s_1tq_0m^2}{d};\ell m\right)}{m\ell}.
\end{align*}
We partition summation depending on the sign and  largest power of $2$ dividing $m$ to get
\begin{align}
\label{eq:S1dpart-2}
S^{(1)}_d\ll\sum_{j\ll \log{N}}S^{(1)}_{d,j,+}+S^{(1)}_{d,j,-},
\end{align}
where
\begin{align}
\label{eq:S1djpart-21}
\nonumber & S^{(1)}_{d,j,\pm}=\sum_{\substack{dY/q_0s_1\le m\le dN/q_0s_1 \\ (m,q_0)=1 \\ (m,s)=1 \\ 2^{j}||m}}e\left(g^{*}\left(\pm\frac{q_0s_1m}{d}\right)\right) \frac{G(\pm bs_1m,0;d)G\left(s_2\overline d,-\frac{s_1tq_0m^2}{d};\pm\ell m\right)}{m\ell} \\
&=\sum_{\substack{dY/2^{j}q_0s_1\le m\le dN/2^{j}q_0s_1 \\ (2^{j}m,sq_0)=1 \\ m\equiv 1 \mod{2} }}e\left(g^{*}\left(\pm\frac{2^{j}q_0s_1m}{d}\right)\right) \\ & \quad \quad \quad \quad \quad \quad \quad \times \frac{G( \pm 2^jbs_1m,0;d)G\left(s_2\overline d,-\frac{s_1tq_02^{2j}m^2}{d};\pm\ell 2^{j}m\right)}{2^{j}m\ell}. \nonumber
\end{align}
Fix some $j$ and consider $S^{(1)}_{d,j,\pm}$. We provide details for $S^{(1)}_{d,j,+},$ after taking complex conjugates a similar argument applies to $S^{(1)}_{d,j,-}$.    For the right hand side of~\eqref{eq:S1djpart-21} to be nonzero, by Lemma~\ref{lem:2} we must have either 
\begin{align*}
d\equiv 0 \mod{4}, \quad d\equiv 1 \mod{4} \quad \text{or} \quad d\equiv 3 \mod{4}.
\end{align*}
If $d\equiv 0 \mod{4}$ then for summation conditions in~\eqref{eq:S1djpart-21} to be nonempty we must have $j=0$.  Hence by Lemma~\ref{lem:4}
\begin{align}
\label{eq:d4}
G(bs_1m,0;d)=\left(\frac{d}{bs_1 m}\right)\left(1+i^{bs_1 m}\right)d^{1/2}.
\end{align}
If either $d\equiv 1 \mod{4}$ or $d\equiv 3 \mod{4}$  then by Lemma~\ref{lem:qreciprocity} and Lemma~\ref{lem:13}
\begin{align}
\label{eq:d1}
\nonumber G(bs_1m,0;d)&=\varepsilon_d\left(\frac{2^{j}bs_1 m}{d}\right)d^{1/2} \\ & =\varepsilon_d\left(\frac{2^{j}bs_1}{d}\right) \left(\frac{d}{m}\right)(-1)^{(d-1)(m-1)/4}d^{1/2},
\end{align}
where 
\begin{align}
\label{eq:epsddef}
\varepsilon_d=\begin{cases}
1 \quad \text{if} \quad d\equiv 1 \mod{4} \\
i \quad \text{if} \quad d\equiv 3 \mod{4}.
\end{cases}
\end{align}
In either case~\eqref{eq:d4} or~\eqref{eq:d1}, we see that there exists some $\gamma_0\in \R$ such that 
\begin{align}
\label{eq:S1dj-990} 
S^{(1)}_{d,j,+}&\ll d^{1/2}\sum_{\substack{dY/2^{j}q_0s_1\le m\le dN/2^{j}q_0s_1 \\ (2^{j}m,sq_0)=1 \\ m\equiv 1 \mod{2} }}e\left(g^{*}\left(\frac{2^{j}q_0s_1m}{d}\right)+\gamma_0 m\right) \\ & \quad \quad \quad \quad \quad \quad \quad \quad \quad \quad \quad  \times \left(\frac{d}{m}\right)\frac{G\left(s_2\overline d,-\frac{s_1tq_02^{2j}m^2}{d};\ell 2^{j}m\right)}{2^{j}m\ell}. \nonumber
\end{align}
Fix $m$ satisfying conditions of summation in~\eqref{eq:S1dj-990} and consider the term 
\begin{align*}
G\left(s_2\overline d,-\frac{s_1tq_02^{2j}m^2}{d};\ell 2^{j}m\right).
\end{align*}
We proceed on a case by case basis depending on whether $j=0$ or not. If $j\ge 1$ then by Lemma~\ref{lem:gausssum1}
\begin{align*}
G\left(s_2\overline d,-\frac{s_1tq_02^{2j}m^2}{d}; 2^{j}\ell m\right)= e_{2^{j}\ell m}\left( \frac{\overline{s_2}s^2_1t^2q^2_02^{4j-2}m^4}{d} \right)G( s_2 \overline{d},0;2^{j}\ell  m),
\end{align*}
and hence there exists an integer $b_3$ such that 
\begin{align*}
G\left( s_2\overline d,-\frac{s_1tq_02^{2j}m^2}{d}; 2^{j}\ell  m\right)= e_{\ell}\left( b_3 m^3 \right)G( s_2 \overline{d},0;2^{j}\ell  m).
\end{align*}
Suppose next $j=0$. If
$$\ell m \equiv 2 \mod{4} \quad \text{and} \quad \frac{s_1tq_0m^2}{d}\equiv 0 \mod{2},$$ then by Lemma~\ref{lem:gausssum1} and  Lemma~\ref{lem:2}
\begin{align*}
G\left( s_2\overline d,-\frac{s_1tq_02^{2j}m^2}{d}; 2^{j}\ell  m\right)=0.
\end{align*}
If
$$\ell  m\equiv 2 \mod{4} \quad \text{and} \quad \frac{s_1tq_02^{2j}m^2}{d}\equiv 1 \mod{2},$$
 then writing $\ell=2\ell_0$, we have by Lemma~\ref{lem:gausssum2} and Lemma~\ref{lem:gaussmult}
\begin{align*}
G\left( s_2\overline d,-\frac{s_1tq_02^{2j}m^2}{d}; 2^{j}\ell  m\right)&=2G\left( 2s_2\overline d,-\frac{s_1tq_02^{2j}m^2}{d}; \ell_0  m\right) \\
&=2e_{\ell}(b_3'm^3)G\left( 2s_2\overline d,0; \ell_0  m\right),
\end{align*}
for some integer $b_3'$. Finally, if $j=0$ and 
$$\ell m \equiv 1 \mod{2},$$
then by Lemma~\ref{lem:gausssum2}
\begin{align*}
G\left( s_2\overline d,-\frac{s_1tq_02^{2j}m^2}{d}; 2^{j}\ell  m\right)=e_{\ell}(b^{''}_3m^3)G\left( 2s_2\overline d,0; \ell  m\right),
\end{align*}
for some integer $b_3''$. Combining the above with Lemma~\ref{lem:13}, Lemma~\ref{lem:4},~\eqref{eq:S1dj-990} and using that 
$$\left(\frac{d}{m}\right)^2=1,$$
we get 
\begin{align*}
S^{(1)}_{d,j,+}&\ll d^{1/2}\sum_{\substack{dY/2^{j}q_0s_1\le m\le dN/2^{j}q_0s_1 \\ (m,sq_0)=1 \\ m\equiv 1 \mod{2} }}\kappa_m\frac{e\left(g^{*}\left(\frac{2^{j}q_0s_1m}{d}\right)+\frac{b_3 m^3}{\ell}+\gamma'_0 m\right)}{(2^j m \ell)^{1/2}},
\end{align*}
for some integer $b_3$, real number $\gamma'_0\in \R$ and sequence $\kappa_m$ satisfying $\kappa_m\ll 1$ which is periodic mod $4$.  Combining the above with~\eqref{eq:S1dpart-2} and  taking a maximum over summation in $j$, there exists $b_4,\gamma_0''$ and $j\ll \log{N}$ such that
\begin{align*}
& S^{(1)}_{d}\ll N^{o(1)}\left(\frac{d}{\ell 2^{j}}\right)^{1/2} \\ & \quad \quad \quad \quad \times \sum_{\substack{dY/2^{j}q_0s_1\le m\le dN/2^{j}q_0s_1 \\ (m,sq_0)=1 \\ m\equiv 1 \mod{2} }}\kappa_m\frac{e\left(g^{*}\left(\frac{2^{j}q_0s_1m}{d}\right)+\frac{b_4 m^3}{\ell}+\gamma''_0 m\right)}{m^{1/2}}.
\end{align*}
Recalling~\eqref{eq:g*def} and partitioning summation over $m$ into residue classes mod $4\ell$ and taking a maximum, we see that there exists some integer $u$ such that 
\begin{align*}
& S^{(1)}_{d}\ll N^{o(1)}\left(\frac{d\ell }{s_1^22^{j}}\right)^{1/2} \\ & \quad \quad \quad \quad \times\sum_{\substack{dY/2^{j}q_0s_1\le m\le dN/2^{j}q_0s_1 \\ (m,sq_0)=1 \\ m\equiv u \mod{4\ell} }}\frac{1}{m^{1/2}}e\left(\frac{1}{4}g\left(\frac{2^{j}q_0s_1m}{d}\right)+\gamma'_0 m\right),
\end{align*}
for suitable $\gamma_0'$. The result follows from Lemma~\ref{lem:apcomp} after renaming $\gamma_0'$.
\end{proof}
We next simplify the sums $S^{(2)}_d$. Inspecting the the proof of Lemma~\ref{lem:weylS2}, it is sufficient to restrict the parameter $j$ to either $0,1$, although the statement below keeps presentation in line with Lemma~\ref{lem:weylS1}. 

\begin{lemma}
\label{lem:weylS2}
With notation and conditions as in Lemma~\ref{lem:weyl4}, there exists a positive integer $j$ and a real number $\gamma_1$ such that
\begin{align*}
& S^{(2)}_{d}\ll N^{o(1)}\left(1+\frac{|s|q}{\ell d^2 Y}\right)\left(\frac{d\ell}{2^j}\right)^{1/2} \\ & \quad \quad \quad \times\sum_{\substack{dY/2^{j}q_0s_1\le m\le dN/2^{j}q_0s_1 \\ (m,sq_0)=1  }}\frac{1}{m^{1/2}}e\left(\frac{1}{4}g\left(\frac{2^{j}q_0s_1m}{d}\right)+\gamma_1 m\right).
\end{align*}

\end{lemma}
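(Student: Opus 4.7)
The plan is to mirror the proof of Lemma~\ref{lem:weylS1}, handling the two structural differences between $S^{(2)}_d$ and $S^{(1)}_d$: the first Gauss sum factor $G(\overline{bs_1m},\overline{bs_1m};d)$ replaces $G(bs_1m,0;d)$, and the second factor $G(s_2\overline{d},s_2\overline{d}-s_1tq_0m^2/d;\ell m)$ carries an extra shift $s_2\overline{d}$ in its linear coefficient. First I would perform a dyadic decomposition exactly as in~\eqref{eq:S1dpart-2}--\eqref{eq:S1djpart-21}, splitting over the sign of $m$ and the largest power $2^j$ of $2$ dividing $m$. Because $(bs_1m,d)=1$, whenever $d$ is even the integer $bs_1m$ is odd and so is $\overline{bs_1m}$, whence Lemma~\ref{lem:evenaa1} forces the first Gauss sum to vanish when $d\equiv 0\pmod 4$; the remaining cases $d\equiv 1,3\pmod 4$ and $d\equiv 2\pmod 4$ are handled by Lemma~\ref{lem:13} and by Lemma~\ref{lem:evenaa} together with Lemma~\ref{lem:gaussmult}, respectively. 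A parallel $2$-adic analysis of the second Gauss sum restricts the contributing $j$ to $\{0,1\}$, though since the statement allows a general $j$ the dyadic framework can be kept intact.

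Next I would apply Lemma~\ref{lem:gausssum1} to $G(\overline{bs_1m},\overline{bs_1m};d)$ followed by Lemma~\ref{lem:13} (or Lemma~\ref{lem:4}/Lemma~\ref{lem:evenaa} when required), producing a factor $d^{1/2}$ together with a Jacobi symbol in $m$ and a linear phase; quadratic reciprocity (Lemma~\ref{lem:qreciprocity}) then converts $(d/m)$ into a character mod $4d$ as in the proof of Lemma~\ref{lem:weylS1}. For the inner Gauss sum I complete the square via Lemma~\ref{lem:gausssum1}, expanding
\begin{align*}
\left(s_2\overline{d}-\tfrac{s_1tq_0m^2}{d}\right)^2 = (s_2\overline{d})^2 - 2s_2\overline{d}\cdot\tfrac{s_1tq_0m^2}{d} + \left(\tfrac{s_1tq_0m^2}{d}\right)^2.
\end{align*}
Multiplying by $-\overline{4s_2\overline{d}}$ and reducing mod $\ell m$, these three terms contribute respectively a phase $-s_2\overline{d}/(4\ell m)$, a phase linear in $m/\ell$, and an $e_\ell$-phase of the form $e_\ell(b_5 m^3)$ for some $b_5\in\Z$. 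The remaining $G(s_2\overline{d},0;\ell m)$ contributes a factor $(\ell m)^{1/2}$ via Lemma~\ref{lem:13} (with Lemma~\ref{lem:4} in the relevant $2$-adic case). Collecting everything reproduces the prefactor $(d\ell/2^j)^{1/2}$ and weight $1/m^{1/2}$, and the various Jacobi symbols and cubic-in-$m$ phases may be absorbed into a congruence condition modulo $4\ell$ together with a linear phase, ultimately removable by Lemma~\ref{lem:apcomp} exactly as in the final step of Lemma~\ref{lem:weylS1}.

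The main obstacle is the phase $-s_2\overline{d}/(4\ell m)$ arising from the $(s_2\overline{d})^2$ piece of the expansion: this is a genuine $1/m$ term with no analogue in the proof of Lemma~\ref{lem:weylS1}, and it is the source of the additional factor $(1+|s|q/(\ell d^2Y))$ in the statement. To handle it I would apply Lemma~\ref{lem:partialsummation}: differentiating,
\begin{align*}
\frac{d}{dm}\!\left(-\frac{s_2\overline{d}}{4\ell m}\right)\ll \frac{|s_2|}{d\ell m^2} = \frac{1}{m}\cdot\frac{|s_2|}{d\ell m},
\end{align*}
so on the summation range $m\ge dY/(2^j q_0 s_1)$ the relevant $\Omega$ satisfies $\Omega\ll |s|q/(\ell d^2Y)$. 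Lemma~\ref{lem:partialsummation} then replaces the $1/m$ phase by a free linear phase $e(\gamma_1 m)$ at the cost of precisely the factor $(1+|s|q/(\ell d^2Y))$. The remaining bookkeeping (residue decomposition mod $4\ell$, application of Lemma~\ref{lem:apcomp}, absorption of characters into $\gamma_1$) runs exactly as in Lemma~\ref{lem:weylS1}.
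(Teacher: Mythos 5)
Your outline correctly mirrors the dyadic decomposition, the $2$-adic case analysis via Lemma~\ref{lem:evenaa1} (ruling out $d\equiv 0\bmod 4$), the use of Lemmas~\ref{lem:13},~\ref{lem:evenaa},~\ref{lem:gaussmult} to evaluate the first Gauss sum, and the endgame of partitioning into residue classes and applying Lemmas~\ref{lem:partialsummation} and~\ref{lem:apcomp}. The factor $\left(1+|s|q/(\ell d^2Y)\right)$ is correctly traced to partial summation.

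However, there is a genuine gap at the step where you dispose of the phase coming from the constant term of the completed square. After completing the square in $G\left(s_2\overline{d},\,s_2\overline{d}-\tfrac{s_1tq_0m^2}{d};\ell m\right)$, the offending phase is $e_{\ell m}\!\left(-\overline{4d}\,s_2\right)$, where $\overline{4d}$ denotes the \emph{modular inverse} of $4d$ modulo $\ell m$. You write this as ``$-s_2\overline{d}/(4\ell m)$'' and then estimate its derivative in $m$ as $\ll |s_2|/(d\ell m^2)$, treating $\overline{d}$ as though it were the real reciprocal $1/d$. But $\overline{4d}\bmod \ell m$ is an integer that can be as large as $\ell m$ and varies arithmetically with $m$; the phase is not a slowly varying function of $m$, so Lemma~\ref{lem:partialsummation} does not apply to it directly.

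The paper resolves this by not isolating that phase: one must first \emph{combine} it with the inverse phase $e_{d_0}\!\left(-\overline{4\eta bs_12^jm}\right)$ that comes from completing the square of the first Gauss sum, rewrite $\overline{4bs_1}\equiv s_2\overline{4\ell}\ (\bmod\ d_0)$ using $b\equiv\ell\overline{s}$, and then apply reciprocity for modular inverses (as in~\eqref{eq:recmmin}). This converts the pair of modular-inverse terms into a term $-s_2\overline{\ell m d_0}/(2^{r+2}\eta)$, which is periodic in $m$ modulo a power of $2$ and so may be removed by partitioning into residue classes, plus the genuine real quantity $s_2/(d_0 2^{r+2}\eta\ell m)$ of size $\ll |s|q/(\ell d^2 Y)$. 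Only this latter piece is legitimately handled by Lemma~\ref{lem:partialsummation}, and that is where the factor $\left(1+|s|q/(\ell d^2Y)\right)$ actually arises. Without the reciprocity step your partial-summation argument would fail.
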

\begin{proof}
Our proof is similar to Lemma~\ref{lem:weylS1} with some extra technical details. Recalling~\eqref{eq:Sd2}
\begin{align*}
& S^{(2)}_d= \\ & \sum_{\substack{dY/q_0s_1\le |m|\le dN/q_0s_1 \\ (m,q_0)=1 \\ (m,s)=1}}e\left(g^{**}\left(\frac{q_0s_1m}{d}\right)\right) \frac{G(\overline {bs_1m},\overline{bs_1m};d)G\left(s_2\overline d,s_2\overline{d}-\frac{s_1tq_0m^2}{d};\ell m\right)}{m\ell},
\end{align*}
and as before
\begin{align}
\label{eq:S2dpart-2}
S^{(2)}_d\ll\sum_{j\ll \log{N}}S^{(2)}_{d,j,+}+S^{(2)}_{d,j,-},
\end{align}
where
\begin{align}
\label{eq:S2djpart-21}
\nonumber  S^{(2)}_{d,j,\pm}
&=\sum_{\substack{dY/2^{j}q_0s_1\le m\le dN/2^{j}q_0s_1 \\ (2^{j}m,sq_0)=1 \\ m\equiv 1 \mod{2} }}e\left(g^{**}\left(\pm\frac{2^{j}q_0s_1m}{d}\right)\right)  \\ & \quad \quad \quad \times \frac{G(\pm\overline {bs_12^{j}m},\pm\overline{bs_12^{j}m};d)G\left(s_2\overline d,s_2\overline{d}-\frac{s_1tq_02^{j}m^2}{d};\pm\ell 2^{j}m\right)}{2^{j} m\ell}. 
\end{align}
We provide details only for $S^{(2)}_{d,j,+}.$ After taking complex conjugates, a similar argument applies to $S^{(2)}_{d,j,-}$. Conisder first the term 
\begin{align*}
G(\overline {bs_12^{j}m},\overline{bs_12^{j}m};d).
\end{align*}
For the right hand side of of summation in~\eqref{eq:S2djpart-21} to be nonzero, by Lemma~\ref{lem:evenaa1} we may suppose either 
\begin{align*}
d\equiv 1 \mod{4}, \quad d\equiv 3 \mod{4} \quad \text{or} \quad d\equiv 2 \mod{4}.
\end{align*}
If either $d\equiv 1 \mod{4}$ or $d\equiv 3 \mod{4}$ then by Lemma~\ref{lem:qreciprocity}, Lemma~\ref{lem:gausssum2} and Lemma~\ref{lem:13}
\begin{align}
\label{eq:Godd123}
\nonumber G(\overline {bs_12^{j}m},\overline{bs_12^{j}m};d)&=e_d(-\overline{4bs_1 2^{j}m})G(\overline {bs_12^{j}m},0;d) \\
&=\varepsilon_d e_d(-\overline{4bs_1 2^{j}m})\left(\frac{bs_1 2^{j}m}{d}\right)d^{1/2} \\
&=\varepsilon_d e_d(-\overline{4bs_1 2^{j}m})\left(\frac{bs_1 2^{j}}{d}\right)\left(\frac{d}{m}\right)(-1)^{(m-1)(d-1)/4}d^{1/2}, \nonumber 
\end{align}
with $\varepsilon_d$ as in~\eqref{eq:epsddef}. If $d\equiv 2 \mod{4}$ then by Lemma~\ref{lem:evenaa}
\begin{align*}
G(\overline {bs_12^{j}m},\overline{bs_12^{j}m};d)=2G\left(2\overline {bs_12^{j}m},\overline{bs_12^{j}m};\frac{d}{2}\right).
\end{align*}
Since $d/2$ is odd, by Lemma~\ref{lem:qreciprocity}, Lemma~\ref{lem:gausssum2} and Lemma~\ref{lem:13} 
\begin{align}
\label{eq:Geven123}
\nonumber G(\overline {bs_12^{j}m},\overline{bs_12^{j}m};d)&=2e_{d/2}(-\overline{8 bs_12^{j}m})G\left(2\overline {bs_12^{j}m},0;\frac{d}{2}\right)  \\
&=\varepsilon_{d/2}e_{d/2}(-\overline{8 bs_12^{j}m})\left(\frac{2bs_12^{j}m}{d/2} \right)(2d)^{1/2} \nonumber \\
&=\varepsilon_{d/2}e_{d/2}(-\overline{8 bs_12^{j}m})\left(\frac{2bs_12^{j}}{d/2} \right)\left(\frac{d/2}{m}\right) \\  & \quad \quad \quad \quad \times (-1)^{(m-1)(d/2-1)/4}(2d)^{1/2}. \nonumber 
\end{align}
Define 
\begin{align}
\label{eq:d0def}
d_0=\begin{cases} 
d \quad \text{if} \quad d\equiv 1 \mod{2} \\
\frac{d}{2} \quad \text{if} \quad d\equiv 2 \mod{4},
\end{cases}
\end{align}
and 
\begin{align}
\label{eq:etadef-99}
\eta= \begin{cases} 1 \quad \text{if} \quad d\equiv 1 \mod{2} \\
2 \quad \text{if} \quad d\equiv 2 \mod{4}.
\end{cases}
\end{align}
Combining~\eqref{eq:S2djpart-21},~\eqref{eq:Godd123} and~\eqref{eq:Geven123}, we see that there exists some $\gamma_2\in \R$ such that 
\begin{align}
\label{eq:S2djpart-2112}
\nonumber & S^{(2)}_{d,j,+}\ll d^{1/2}\sum_{\substack{dY/2^{j}q_0s_1\le m\le dN/2^{j}q_0s_1 \\ (2^{j}m,sq_0)=1 \\ m\equiv 1 \mod{2} }}e\left(g^{**}\left(\frac{2^{j}q_0s_1m}{d}\right)+\gamma_2 m\right)e_{d_0}(-\overline{4\eta b s_1 2^{j} m})  \\ & \quad \quad \quad \times \left(\frac{d_0}{m}\right)\frac{G\left(s_2\overline {\eta d_0},s_2\overline{\eta d_0}-\frac{s_1tq_02^{j}m^2}{\eta d_0};\ell 2^{j}m\right)}{2^{j} m\ell}.
\end{align}
We have 
\begin{align}
\label{eq:G-978675}
G\left(s_2\overline {\eta d_0},s_2\overline{\eta d_0}-\frac{s_1tq_02^{j}m^2}{\eta d_0};\ell 2^{j}m\right)=G\left( \overline{s_2}\eta d_0,1-s_1\overline{s_2}tq_02^{j}m^2;\ell  2^{j}m\right).
\end{align}
We next proceed on a case by case basis depending on whether $j=0$ or not. If $j\ge 1$ then by~\eqref{eq:G-978675} and Lemma~\ref{lem:gausssum1}, there exists some integer $b_4$ and real number $\gamma_2'$ such that
\begin{align*}
&G\left(s_2\overline {\eta d_0},s_2\overline{\eta d_0}-\frac{s_1tq_02^{j}m^2}{\eta d_0};\ell 2^{j}m\right)= \\ & \quad \quad \quad \quad \quad \quad \quad \quad \quad \quad e\left(\frac{b_4 m^3}{2\ell}+\gamma_2' m \right)G(s_2 \overline{\eta d_0}, s_2 \overline{\eta d_0};\ell 2^{j}m).
\end{align*}
By Lemma~\ref{lem:evenaa1}, if summation in~\eqref{eq:S2djpart-2112} is nonzero we must have $j=1$ and $\ell  m$ odd. Hence it is sufficient to estimate $S^{(2)}_{j,1,+}$ with the condition $\ell $ odd.
By the above, Lemma~\ref{lem:gausssum2} and Lemma~\ref{lem:evenaa} 
\begin{align*}
G\left(s_2\overline {\eta d_0},s_2\overline{\eta d_0}- \frac{2s_1tq_0m^2}{\eta d_0};2\ell  m\right)=2e\left(\frac{b_4 m^3}{\ell}+\gamma_2' m \right)G(2 s_2 \overline{\eta d_0}, s_2 \overline{\eta d_0};\ell m) \\ 
 =2e\left(\frac{b_4 m^3}{\ell}+\gamma_2' m-\frac{\overline{2^{3}\eta d_0}s_2}{\ell m} \right)G(2 s_2 \overline{\eta d_0},0;\ell m).
\end{align*}
Substituting into~\eqref{eq:S2djpart-2112} and recalling $j=1$,~\eqref{eq:ellsdef} and~\eqref{eq:s2def11}
\begin{align}
\label{eq:s1234-22}
\nonumber & S^{(2)}_{d,1,+}\ll d^{1/2}\sum_{\substack{dY/2q_0s_1\le m\le dN/2q_0s_1 \\ (2m,sq_0)=1 \\ m\equiv 1 \mod{2} }}e\left(g^{**}\left(\frac{2q_0s_1m}{d}\right)+\frac{b_4 m^3}{\ell}+\gamma_2 m\right)  \\ & \quad \quad \quad \times e\left(-s_2\left(\frac{\overline{2^3\eta \ell m}}{d_0}+\frac{\overline{2^{3}\eta d_0}}{\ell m}\right)\right)\left(\frac{d_0}{m}\right)\frac{G(2 s_2 \overline{\eta d_0},0;\ell m)}{ m\ell}.
\end{align}
Using reciprocity for modular inverses, we have 
\begin{align*}
\frac{\overline{2^3\eta \ell m}}{d_0}+\frac{\overline{2^{3}\eta d_0}}{\ell m}\equiv -\frac{\overline{d_0}}{2^{3}\eta \ell m}+\frac{\overline{2^{3}\eta d_0}}{\ell m}+\frac{1}{d_02^3\eta \ell m} \mod{1},
\end{align*}
and since 
\begin{align*}
\frac{\overline{d_0}}{2^{3}\eta \ell m}\equiv \frac{\overline{2^{3}\eta d_0}}{\ell m}+\frac{\overline{\ell m d_0}}{2^{3}\eta} \mod{1},
\end{align*}
we get 
\begin{align}
\label{eq:recmmin}
\frac{\overline{2^3\eta \ell m}}{d_0}+\frac{\overline{2^{3}\eta d_0}}{\ell m}\equiv -\frac{\overline{\ell m d_0}}{2^{3}\eta}+\frac{1}{d_02^3\eta \ell m} \mod{1}.
\end{align}
Using the above in~\eqref{eq:s1234-22}, recalling~\eqref{eq:etadef-99} and partitioning summation over $m$ into residue classes mod $16$, there exists some integer $\nu_1$ such that 
\begin{align}
\label{eq:s1234-2211}
\nonumber & \sum_{j\ge 1}S^{(2)}_{d,j,+}\ll d^{1/2}\sum_{\substack{dY/2q_0s_1\le m\le dN/2q_0s_1 \\ (2m,sq_0)=1 \\ m\equiv \nu_1 \mod{16} }}e\left(g^{**}\left(\frac{2q_0s_1m}{d}\right)+\frac{b_4 m^3}{\ell}+\gamma_2 m+\frac{s_2}{d_02^3\eta \ell m}\right)  \\ & \quad \quad \quad \quad \quad \quad \quad \quad \quad \quad  \times \left(\frac{d_0}{m}\right)\frac{G(2 s_2 \overline{\eta d_0},0;\ell m)}{ m\ell}.
\end{align}
Using Lemma~\ref{lem:13} and noting summation over $m$ in~\eqref{eq:s1234-2211} is restricted to a fixed congruence class mod $16$ we get  
\begin{align*}
\nonumber & \sum_{j\ge 1}S^{(2)}_{d,j,+}\ll \left(\frac{d}{\ell }\right)^{1/2} \\ & \quad \quad  \quad \times \sum_{\substack{dY/2q_0s_1\le m\le dN/2q_0s_1 \\ (2m,sq_0)=1 \\ m\equiv \nu_1 \mod{16} }}\frac{1}{m^{1/2}}e\left(g^{**}\left(\frac{2q_0s_1m}{d}\right)+\frac{b_4 m^3}{\ell}+\gamma_2 m+\frac{s_2}{d_02^3\eta \ell m}\right).
\end{align*}
Using Lemma~\ref{lem:partialsummation} to remove the term $s_2/(d_02^3\eta \ell m)$, the above simplifies to
\begin{align*}
\nonumber & \sum_{j\ge 1}S^{(2)}_{d,1,+}\ll N^{o(1)}\left(1+\frac{|s|q}{\ell d^2Y}\right)\left(\frac{d}{\ell }\right)^{1/2} \\ & \quad \quad  \quad \times \sum_{\substack{dY/2q_0s_1\le m\le dN/2q_0s_1 \\ (2m,sq_0)=1 \\ m\equiv \nu_1 \mod{16} }}\frac{1}{m^{1/2}}e\left(g^{**}\left(\frac{2q_0s_1m}{d}\right)+\frac{b_4 m^3}{\ell}+\gamma'_2 m\right),
\end{align*}
for some real number $\gamma_2'$. Finally, partitioning summation into residue classes mod $\ell$, recalling~\eqref{eq:g**def} and using Lemma~\ref{lem:apcomp} as in the proof of Lemma~\ref{lem:weylS1} we obtain 
\begin{align}
\label{eq:S2d111-final}
\nonumber & \sum_{j\ge 1}S^{(2)}_{d,1,+}\ll N^{o(1)}(d \ell )^{1/2}\left(1+\frac{|s|q}{\ell d^2Y} \right) \\ & \quad \quad  \quad \times \sum_{\substack{dY/2q_0s_1\le m\le dN/2q_0s_1 \\ (m,sq_0)=1}}\frac{1}{m^{1/2}}e\left(\frac{1}{4}g\left(\frac{2q_0s_1m}{d}\right)+\gamma_1 m\right),
\end{align}
for some real number $\gamma_1$.  

Returning to~\eqref{eq:S2djpart-2112}, consider next when $j=0$. Using~\eqref{eq:G-978675}, we have  
\begin{align}
\label{eq:abc-645}
\nonumber & S^{(2)}_{d,0,+}\ll d^{1/2}\sum_{\substack{dY/q_0s_1\le m\le dN/q_0s_1 \\ (m,sq_0)=1 \\ m\equiv 1 \mod{2} }}e\left(g^{**}\left(\frac{q_0s_1m}{d}\right)+\gamma_2 m\right)e_{d_0}(-\overline{4\eta b s_1  m})  \\ & \quad \quad \quad \times \left(\frac{d_0}{m}\right)\frac{G\left( \overline{s_2}\eta d_0,1-s_1\overline{s_2}tq_0m^2;\ell  m\right)}{m\ell}.
\end{align}
Let $2^{r}$ be the largest power of $2$ dividing $\ell$ and write 
\begin{align}
\label{eq:ell1ell23}
\ell=2^{r}\ell_1,
\end{align}
with $\ell_1$ odd. By Lemma~\ref{lem:gaussmult}
\begin{align*}
&G\left( \overline{s_2}\eta d_0,1-s_1\overline{s_2}tq_0m^2,\ell m\right)= \\ &  \quad \quad \quad G\left( \overline{s_2}\eta d_0 \ell_1 m,1-s_1\overline{s_2}tq_0m^2,2^{r}\right)G\left(2^{r} \overline{s_2}\eta d_0,1-s_1\overline{s_2}tq_0m^2,\ell_1 m\right).
\end{align*}
From the above and~\eqref{eq:abc-645}, there exists integers $\nu_1,\nu_2,\nu_3$ with $\nu_1,\nu_2$ odd such that 
\begin{align}
\label{eq:abc-6451}
\nonumber & S^{(2)}_{d,0,+}\ll d^{1/2}G(\nu_2,\nu_3,2^{r})\sum_{\substack{dY/q_0s_1\le m\le dN/q_0s_1 \\ (m,sq_0)=1  \\ m\equiv \nu_1 \mod{2^{r+3}} }}e\left(g^{**}\left(\frac{q_0s_1m}{d}\right)+\gamma_2 m\right)e_{d_0}(-\overline{4\eta b s_1  m})  \\ & \quad \quad \quad \times \left(\frac{d_0}{m}\right)\frac{G\left(2^{r} \overline{s_2}\eta d_0,1-s_1\overline{s_2}tq_0m^2,\ell_1 m\right)}{m\ell_1}.
\end{align}
With notation as in~\eqref{eq:epsddef}, by Lemma~\ref{lem:gausssum2} and Lemma~\ref{lem:13}
\begin{align*}
& G\left(2^{r} \overline{s_2}\eta d_0,1-s_1\overline{s_2}tq_0m^2,\ell_1 m\right) \\
& \quad \quad \quad \quad \quad =e_{\ell_1m}\left(s_2 \overline{\eta d_0 2^{r+2}}(1-s_1\overline{s_2}t q_0 m^2)^2 \right)G(2^{r}\overline{s_2}\eta d_0,0;\ell_1m) \\
& \quad \quad \quad \quad \quad =\varepsilon_{\ell_1m} e_{\ell_1m}\left(-s_2 \overline{\eta d_0 2^{r+2}}(1-s_1\overline{s_2}t q_0 m^2)^2 \right)\left( \frac{2^{r} s_2 \eta d_0}{\ell_1 m}\right)(\ell_1 m)^{1/2}.
\end{align*}
Substituting into~\eqref{eq:abc-6451}, there exists an integer $b_5$ and real number $\gamma_2'$ such that
\begin{align}
\label{eq:S20-almostlast}
\nonumber & S^{(2)}_{d,0,+}\ll \left(\frac{2^{r}d}{\ell_1}\right)^{1/2}\sum_{\substack{dY/q_0s_1\le m\le dN/q_0s_1 \\ (m,sq_0)=1  \\ m\equiv \nu_1 \mod{2^{r+3}} }}\frac{1}{m^{1/2}}e\left(g^{**}\left(\frac{q_0s_1m}{d}\right)+\frac{b_5m^3}{\ell_1}+\gamma'_2 m\right)  \\ & \quad \quad \quad \times e\left(-\left(\frac{\overline {4\eta b s_1 m}}{d_0}+\frac{s_2\overline{\eta d_0 2^{r+2}}}{\ell_1m} \right) \right),
\end{align}
where we have used the bound 
$$G(\nu_2,\nu_3,2^{r})\ll 2^{r/2}.$$
From~\eqref{eq:ellsdef},~\eqref{eq:s2def11} and~\eqref{eq:ell1ell23}
\begin{align*}
\frac{\overline {4\eta b s_1 m}}{d_0}+\frac{s_2\overline{\eta d_0 2^{r+2}}}{\ell_1m}\equiv s_2\left(\frac{\overline {2^{r+2}\eta \ell_1 m}}{d_0}+\frac{\overline{2^{r+2}\eta d_0}}{\ell_1m}\right) \mod{1},
\end{align*}
and by reciprocity for modular inverses
\begin{align*}
\frac{\overline {4\eta b s_1 m}}{d_0}+\frac{s_2\overline{\eta d_0 2^{r+2}}}{\ell_1m}&\equiv s_2\left(-\frac{\overline {d_0}}{2^{r+2}\eta \ell_1 m}+\frac{\overline{2^{r+2}\eta d_0}}{\ell_1m}+\frac{1}{d_02^{r+2}\eta \ell_1 m}\right) \mod{1} \\ 
&\equiv -\frac{s_2\overline{d_0 \ell_1 m}}{2^{r+2}\eta}+\frac{s_2}{4d_0\eta \ell m} \mod{1}.
\end{align*}
Substituting into~\eqref{eq:S20-almostlast} and using Lemma~\ref{lem:partialsummation} as before, we arrive at 
\begin{align}
\label{eq:S20-almostlast}
\nonumber & S^{(2)}_{d,0,+}\ll N^{o(1)}\left(1+\frac{|s|q}{\ell d^2 Y}\right)\left(\frac{2^{r}d}{\ell_1}\right)^{1/2} \\ & \quad \quad \times \sum_{\substack{dY/q_0s_1\le m\le dN/q_0s_1 \\ (m,sq_0)=1  \\ m\equiv \nu_1 \mod{2^{r+3}} }}\frac{1}{m^{1/2}}e\left(g^{**}\left(\frac{q_0s_1m}{d}\right)+\frac{b_5m^3}{\ell_1}+\gamma''_2 m\right),
\end{align}
for some real number $\gamma_2''$. Partitioning summation into residue classes mod $\ell_1$ and using Lemma~\ref{lem:apcomp} as in the proof of Lemma~\ref{lem:weylS1} gives
\begin{align*}
\nonumber & S^{(2)}_{d,0,+}\ll N^{o(1)}\left(1+\frac{|s|q}{\ell d^2 Y}\right)(d\ell )^{1/2}\sum_{\substack{dY/q_0s_1\le m\le dN/q_0s_1 \\ (m,sq_0)=1 }}\frac{1}{m^{1/2}}e\left(\frac{1}{4}g\left(\frac{q_0s_1m}{d}\right)+\gamma''_1 m\right),
\end{align*}
and the result follows combining the above with~\eqref{eq:S2dpart-2},~\eqref{eq:S2d111-final} and renaming terms. 
\end{proof}
Combining Lemma~\ref{lem:weyl4}, Lemma~\ref{lem:weylS1} and Lemma~\ref{lem:weylS2} and taking a maxmimum over $S^{(1)}_d$ and $S^{(2)}_d$ we deduce the following.
\begin{lemma}
\label{lem:weyl5}
With notation and conditions as in Lemma~\ref{lem:weyl4}, there exists a positive integer $j$ and some $\gamma_1\in \R$  such that 
\begin{align*}
& S_{d}\ll \left(\frac{dM_dN^{14\varepsilon}}{N}\right)^2\left(1 +\frac{q|s|}{\ell  N^2Y}\right)\left(1+\frac{q|s|}{\ell d^2 Y}\right)\frac{(d\ell )^{1/2}}{2^{j/2}} \\ & \quad \quad \quad \times \sum_{\substack{dY/2^{j}q_0s_1\le m\le dN/2^{j}q_0s_1 \\ (m,sq_0)=1  }}\frac{1}{m^{1/2}}e\left(\frac{1}{4}g\left(\frac{2^{j}q_0s_1m}{d}\right)+\gamma_1 m\right).
\end{align*}
\end{lemma}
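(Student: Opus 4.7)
The plan is essentially to chain together Lemma~\ref{lem:weyl4}, Lemma~\ref{lem:weylS1} and Lemma~\ref{lem:weylS2}, which together already produce the claimed bound up to absorbing $N^{o(1)}$ factors and bookkeeping on the parameters $j$, $s_1$ and $\gamma$. The starting point is the output of Lemma~\ref{lem:weyl4}, namely
\begin{align*}
S_{d}\ll \left(\frac{dM_dN^{13\varepsilon}}{N}\right)^2\left(1+\frac{q|s|}{\ell N^2Y}\right)\bigl(S^{(1)}_d+S^{(2)}_d\bigr),
\end{align*}
with the divisor $s_1\mid s$ and the polynomials $g^{*}$, $g^{**}$ selected in that lemma held fixed.

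Next I would substitute the bounds from Lemma~\ref{lem:weylS1} and Lemma~\ref{lem:weylS2} into the sum $S^{(1)}_d+S^{(2)}_d$. The first lemma produces some $(j_1,\gamma_0)$ and a weighted sum over $m$ involving $g(2^{j_1}q_0s_1m/d)/4$; the second produces some $(j_2,\gamma_1)$ and an analogous sum multiplied by the extra factor $(1+|s|q/(\ell d^2Y))$. Since both sums on $m$ have the same structure, I would take the maximum of the two contributions; this selects one pair $(j,\gamma_1)$ (after renaming) and the larger prefactor, i.e.\ both the factor $(1+q|s|/(\ell N^2Y))$ coming from Lemma~\ref{lem:weyl4} and the factor $(1+q|s|/(\ell d^2Y))$ coming from Lemma~\ref{lem:weylS2} survive. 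The factor $(d\ell)^{1/2}/2^{j/2}$ is the common envelope of $(d\ell)^{1/2}/2^{j/2}$ from~S1 and $(d\ell/2^j)^{1/2}$ from~S2.

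Finally I would consolidate the $N^{\varepsilon}$ powers: Lemma~\ref{lem:weyl4} contributes $N^{13\varepsilon}$ inside the squared parenthesis and Lemmas~\ref{lem:weylS1},~\ref{lem:weylS2} contribute $N^{o(1)}$ each. Absorbing these $N^{o(1)}$ factors into a single $N^{\varepsilon}$ upgrade converts $N^{13\varepsilon}$ into $N^{14\varepsilon}$ (at the cost of harmlessly shrinking the working value of $\varepsilon$), which is precisely the exponent appearing in the statement of Lemma~\ref{lem:weyl5}.

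The only real obstacle is cosmetic: ensuring that the parameters $j$ and $\gamma$ produced by S1 and S2 can be unified into a single $j$ and single $\gamma_1$ after taking the maximum. This is handled by choosing $(j,\gamma_1)$ to be whichever of the two pairs realises the larger sum and noting that the resulting bound has exactly the form claimed. All the analytic content (Poisson summation, Gauss sum evaluation, reciprocity, quadratic reciprocity, and removal of slowly varying weights via Lemma~\ref{lem:partialsummation} and Lemma~\ref{lem:apcomp}) has already been discharged in the three earlier lemmas, so no new estimate is required.
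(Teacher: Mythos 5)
Your proposal is correct and follows the same route as the paper, which itself presents Lemma~\ref{lem:weyl5} as a direct consequence of combining Lemma~\ref{lem:weyl4}, Lemma~\ref{lem:weylS1} and Lemma~\ref{lem:weylS2} and taking a maximum over $S^{(1)}_d$ and $S^{(2)}_d$. The only (harmless) minor slip is describing $(d\ell)^{1/2}/2^{j/2}$ and $(d\ell/2^j)^{1/2}$ as needing a common envelope when they are in fact the same quantity; the $N^{o(1)}$ absorption into the upgrade $N^{13\varepsilon}\to N^{14\varepsilon}$ and the unification of $(j,\gamma)$ by taking the larger of the two contributions are exactly the paper's implicit bookkeeping.
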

\begin{lemma}
\label{lem:weyl6}
With notation and conditions as in Lemma~\ref{lem:weyl4}, there exists integers $a_1,d,M,y$ satisfying 
\begin{equation}
\label{eq:weyl6-conds}
\frac{Y}{2s}\le M_1 \le 2N, \quad d|q_0, \quad (a_1,d)=1,
\end{equation} 
and some $\gamma_1 \in \R$ such that writing 
\begin{align}
\label{eq:q1def}
q_1=(q,3)d,
\end{align}
we have 
\begin{align*}
S_{d}&\ll \left(\frac{dM_dN^{16\varepsilon}}{N}\right)^2\left(1 +\frac{q |s|}{\ell  N^2Y}\right)\left(1+\frac{q|s|}{\ell d^2 Y}\right)\left(\frac{q\ell}{M_1}\right)^{1/2} \\ & \quad \quad \quad \quad \quad \times  \sum_{\substack{1\le m\le q_1M_1/yq}}e\left(\frac{a_1 y^3q^2 m^3}{4q_1^3}+\gamma_1m\right).
\end{align*}
\end{lemma}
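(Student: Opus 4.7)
The plan is to repackage the bound from Lemma~\ref{lem:weyl5} into the form of Lemma~\ref{lem:weyl6}. Lemma~\ref{lem:weyl5} delivers
\[
S_d \ll \text{(preamble)}\cdot \frac{(d\ell)^{1/2}}{2^{j/2}} \sum_{\substack{A\le m \le B \\ (m,sq_0)=1}} \frac{1}{m^{1/2}} e\Bigl(\tfrac{1}{4}g\bigl(\tfrac{2^jq_0s_1m}{d}\bigr)+\gamma_1 m\Bigr),
\]
with $A=dY/(2^jq_0s_1)$ and $B=dN/(2^jq_0s_1)$, and four transformations separate this from the target: removing the coprimality condition $(m,sq_0)=1$, removing the $m^{-1/2}$ weight, completing the summation to $[1,\cdot]$, and rewriting the cubic coefficient in the prescribed shape.

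First I will remove the coprimality by M\"obius inversion. Writing $\mathbf{1}_{(m,sq_0)=1}=\sum_{e\mid(m,sq_0)}\mu(e)$ and substituting $m=ek$ reduces matters (after losing $N^{o(1)}$ from the divisor bound $d(sq_0)\ll N^{o(1)}$) to a single $e\mid sq_0$. I then set $y=2^{j}s_1e$: using $q=(q,3)q_0$ and $q_1=(q,3)d$, the identity $q_0^3/(qd^3)=q^2/q_1^3$ is immediate, and expanding $\tfrac14 g(q_0yk/d)$ shows the cubic part of the exponent is exactly $ay^3q^2k^3/(4q_1^3)$, matching the required form with $a_1=a$ (automatically coprime to $d$ since $(a,q)=1$ and $d\mid q$); the linear part is absorbed into a new $\gamma_1$.

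Next I will apply Abel summation to remove the $k^{-1/2}$ weight, extracting
\[
\Bigl|\sum_{A/e \le k \le B/e}\frac{e(\varphi(k))}{k^{1/2}}\Bigr|\ll N^{o(1)}\max_{M_1^{\ast}\in[A/e,B/e]}\frac{1}{(M_1^{\ast})^{1/2}}\Bigl|\sum_{A/e\le k\le M_1^{\ast}}e(\varphi(k))\Bigr|,
\]
after which Lemma~\ref{lem:intervalcompletion} with $N_1=1$, $N=M_1^{\ast}$ converts the truncated inner sum into $\sum_{1\le k \le M_1^{\ast}}e(\varphi(k)+\gamma' k)$ at the cost of a further linear phase absorbed into $\gamma_1$. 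Defining $M_1$ by $q_1M_1/(yq)=M_1^{\ast}$, a direct computation with $y=2^js_1e$ and $q_1=(q,3)d$ yields $M_1=M_1^{\ast}\cdot 2^js_1eq_0/d\in[Y,N]\subseteq[Y/(2s),2N]$, as required.

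The main obstacle will be the constant-matching in the prefactor. Combining the $(d\ell/2^j)^{1/2}$ from Lemma~\ref{lem:weyl5}, the $e^{-1/2}$ from the change of variable, and the $(M_1^{\ast})^{-1/2}$ from Abel summation, the identity $eM_1^{\ast}=dM_1/(2^js_1q_0)$ collapses these into $(s_1q_0\ell/M_1)^{1/2}$, whereas the target is $(q\ell/M_1)^{1/2}=(q_0(q,3)\ell/M_1)^{1/2}$. The two differ by a factor $((q,3)/s_1)^{1/2}$ bounded by $\sqrt 3$, which I will absorb either by rescaling $M_1$ by the constant $(q,3)\le 3$ (which preserves $M_1\in[Y/(2s),2N]$) or, if $s_1$ is large, by a further partition of the $k$-sum into residue classes modulo $s_1$ followed by Lemma~\ref{lem:apcomp} to reabsorb the factor into a linear phase. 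Accumulating the $N^{2\varepsilon}$ losses from the M\"obius, Abel, and completion steps with the $N^{14\varepsilon}$ from Lemma~\ref{lem:weyl5} produces the claimed $N^{16\varepsilon}$.
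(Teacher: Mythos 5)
Your overall strategy (M\"obius to strip the coprimality condition, partial summation to remove the $m^{-1/2}$ weight, completion to $[1,\cdot]$, then a reparameterization to match the prescribed cubic exponent) is the same as the paper's, and your algebra for the cubic coefficient is correct: with $q_1=(q,3)d$ and $q=(q,3)q_0$ one has $q_0^3/(qd^3)=q^2/q_1^3$, so the coefficient of $m^3$ does come out as $a_1y^3q^2/(4q_1^3)$ under your choices. However, there is a real gap in the prefactor matching that your own discussion of the ``main obstacle'' does not repair, and in fact the comparison is stated in the wrong direction.

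You choose $a_1=a$ and $y=2^js_1e$. The constraint that the range of summation in the target equals the range $M_1^\ast$ you have after M\"obius and completion, $q_1M_1/(yq)=M_1^\ast$, then forces $M_1=M_1^\ast\cdot 2^js_1eq_0/d$, which as you note collapses the combined prefactor to $(s_1q_0\ell/M_1)^{1/2}$. The target prefactor with this same $M_1$ is $(q\ell/M_1)^{1/2}=((q,3)q_0\ell/M_1)^{1/2}$. For your derived bound to imply the target you would need $(s_1q_0\ell/M_1)^{1/2}\ll N^{o(1)}((q,3)q_0\ell/M_1)^{1/2}$, i.e.\ $s_1\ll N^{o(1)}$ --- but $s_1$ can be as large as $|s|$. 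You instead observe that the ratio $((q,3)/s_1)^{1/2}$ is at most $\sqrt3$; this is the reciprocal of the ratio you actually need to control, so the $\sqrt3$ bound does not close the gap. Your fallback suggestion, partitioning the $k$-sum into residue classes modulo $s_1$ and applying Lemma~\ref{lem:apcomp}, makes the loss worse (it costs a factor $s_1$ from taking a maximum over classes), not better.

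The paper resolves this by absorbing $s_1^3$ into the leading coefficient rather than into $y$: it sets $a_1=as_1^3$, $y=2^jf$ (your $f=e$), and $M_1=M/s_1$. This still gives the same cubic coefficient $a_1y^3q^2/(4q_1^3)$ and the same summation endpoint $q_1M_1/(yq)=dM/(2^jfq_0s_1)$, but now the target prefactor is $(q\ell/M_1)^{1/2}=(q\ell s_1/M)^{1/2}$, which matches the derived $(\ell q_0 s_1/M)^{1/2}$ up to the harmless $(q,3)^{1/2}\le\sqrt3$; the bound $Y/(2s)\le M/s_1\le 2N$ and $(as_1^3,d)=1$ (since $(a,q)=1$, $d\mid q$, and $(s,d)=1$ from $b\equiv\ell\bar s\pmod d$) then give~\eqref{eq:weyl6-conds}. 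One further minor point: your Abel-summation display bounds $\sum k^{-1/2}e(\varphi(k))$ by $\max_{M_1^\ast}(M_1^\ast)^{-1/2}|\sum_{k\le M_1^\ast}e(\varphi(k))|$, but for a decreasing weight the correct bound from Abel summation carries the weight at the \emph{lower} endpoint; to get the weight at the scale of $M_1^\ast$ one should first perform the dyadic decomposition as the paper does, then apply Lemma~\ref{lem:partialsummation} on a single dyadic block.
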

\begin{proof}
Let 
\begin{equation}
\label{eq:weyl6-Sdef5}
S=\sum_{\substack{dY/2^{j}q_0s_1\le m\le dN/2^{j}q_0s_1 \\ (m,sq_0)=1  }}\frac{1}{m^{1/2}}e\left(\frac{1}{4}g\left(\frac{2^{j}q_0s_1m}{d}\right)+\gamma_1 m\right),
\end{equation}
so by Lemma~\ref{lem:weyl5}
\begin{equation}
\label{eq:SS6-weyl5}
S_{d}\ll \left(\frac{dM_dN^{14\varepsilon}}{N}\right)^2\left(1 +\frac{q |s|}{\ell  N^2Y}\right)\left(1+\frac{q|s|}{\ell d^2 Y}\right)\frac{(d\ell )^{1/2}}{2^{j/2}}S.
\end{equation}
 In~\eqref{eq:weyl6-Sdef5} we partition summation over $m$ into dyadic intervals and take a maximum to obtain some $M$ satisying 
\begin{align}
\label{eq:Mbounds-84592}
Y/2\le M \le 2N,
\end{align}
such that 
\begin{align}
\label{eq:weyl6-Sdef}
S\ll N^{o(1)}\sum_{\substack{dY/2^{j}q_0s_1\le m\le dN/2^{j}q_0s_1 \\ dM/2^{j+1}q_0s_1< m\le dM/2^{j}q_0s_1 \\  (m,sq_0)=1   }}\frac{1}{m^{1/2}}e\left(\frac{1}{4}g\left(\frac{2^{j}q_0s_1m}{d}\right)+\gamma_1 m\right).
\end{align}
 By Lemma~\ref{lem:partialsummation} and Lemma~\ref{lem:intervalcompletion}
\begin{equation}
\label{eq:weyl6-SS'}
S\ll \frac{2^{j/2}q^{1/2}s_1^{1/2}N^{o(1)}}{d^{1/2}M^{1/2}}S',
\end{equation}
where 
\begin{align*}
S'=\sum_{\substack{1\le m\le dM/2^{j}q_0s_1 \\ (m,sq_0)=1  }}e\left(\frac{1}{4}g\left(\frac{2^{j}q_0s_1m}{d}\right)+\gamma'_1 m\right),
\end{align*}
for some $\gamma_1'\in \R$. Detecting the condition $(m,q_0s)$ via the M\"{o}bius function
\begin{align*}
S'&=\sum_{\substack{1\le m\le dM/2^{j}q_0s_1  }}\sum_{f|(m,q_0s)}\mu(f)e\left(\frac{1}{4}g\left(\frac{2^{j}q_0s_1m}{d}\right)+\gamma'_1 m\right) \\
&=\sum_{f|q_0s}\mu(f)\sum_{\substack{1\le m\le dM/2^{j}q_0s_1 \\ m\equiv 0 \mod{f}  }}e\left(\frac{1}{4}g\left(\frac{2^{j}q_0s_1m}{d}\right)+\gamma'_1 m\right).
\end{align*}
Taking a maximum over $f$, there exists some $f|q_0s$ such that 
\begin{align*}
S'&\ll q^{o(1)}\sum_{\substack{1\le m\le dM/2^{j}q_0s_1f  }}e\left(\frac{1}{4}g\left(\frac{2^{j}fq_0s_1m}{d}\right)+\gamma''_1 m\right),
\end{align*}
for some $\gamma''_1\in \R$. Recalling~\eqref{eq:gpoly} and~\eqref{eq:q0delta1}, we have 
\begin{align*}
S'\ll q^{o(1)}\sum_{\substack{1\le m\le dM/2^{j}q_0s_1f  }}e\left(\frac{a2^{3j}f^3q^2_0s^3_1m^3}{4(q,3)d^3}+\gamma'''_1m\right),
\end{align*}
for some $\gamma_1'''\in \R$. Let
\begin{equation}
\label{eq:weyl6-notation}
a_1=as_1^3, \quad y=2^{j}f, \quad q_1=(q,3)d, \quad M_1=\frac{M}{s_1}.
\end{equation}
Since $s$ satisfies~\eqref{eq:ellsdef} and $s_1|s$, we have 
\begin{align}
\label{eq:as1-495}
(as_1^3,q_1)=1,
\end{align}
 and from~\eqref{eq:Mbounds-84592}
\begin{align}
\label{eq:M1bounds}
\frac{Y}{2s}\le \frac{Y}{2s_1}\le M_1\le 2N.
\end{align}
By~\eqref{eq:as1-495} and~\eqref{eq:M1bounds}, the conditions~\eqref{eq:weyl6-conds} are satisfied. Using the notation~\eqref{eq:weyl6-notation} and recalling~\eqref{eq:q0delta1}, this gives 
\begin{align*}
S'\ll q^{o(1)}\sum_{\substack{1\le m\le q_1M_1/yq}}e\left(\frac{a_1q^2y^3m^3}{4q_1^3}+\gamma'''_1m\right).
\end{align*}
Combining the above with~\eqref{eq:SS6-weyl5} and~\eqref{eq:weyl6-SS'} results in
\begin{align*}
S_{d}&\ll \left(\frac{dM_dN^{16\varepsilon}}{N}\right)^2\left(1+\frac{q |s|}{\ell  N^2Y}\right)\left(1+\frac{q|s|}{\ell d^2 Y}\right)\left(\frac{q\ell }{M_1}\right)^{1/2} \\ & \quad \quad \quad \quad \times  \sum_{\substack{1\le m\le q_1M_1/yq}}e_{q_1}\left(\frac{a_1 y^3q^2 m^3}{q_1^2}+\gamma''''_1m\right),
\end{align*}
from which the desired result follows after renaming  $\gamma''''_1$.
\end{proof}
\section{Reduction to an iterative inequality}
We next combine Lemma~\ref{lem:weyliter} and~Lemma~\ref{lem:weyl6} to reduce to an iterative type inequality. 
\begin{lemma}
\label{lem:iterate}
For integers $a,q$ satisfying $(a,q)=1$ and $\gamma\in \R$ let 
\begin{align*} 
g(x)=\frac{a x^3}{q}+\gamma x.
\end{align*}
Let $\varepsilon,\rho,\delta>0$ be small and suppose $N$ satisfies
\begin{align}
\label{eq:deltaiter}
\frac{N^{1/2+\varepsilon}}{q^{1/4}}\ll \delta \le 1,
\end{align} 
and
\begin{align}
\label{eq:iterNconds}
q^{1/3}\le N \le \left(\delta^4q\right)^{1/(2+4\rho+22\varepsilon)}.
\end{align}
Let $f$ be a smooth function satisfying 
\begin{align*}
\text{supp}(f)\subseteq[N,2N], \quad f^{(j)}(x)\ll \frac{1}{x^{j}}.
\end{align*}
and suppose that 
\begin{align}
\label{eq:itercond}
\sum_{n\in \Z}f(n)e\left(g(n)\right) \gg \delta (qN)^{1/4+o(1)}.
\end{align}
There exists integers $a_1,q_1,M,y$ satisfying 
\begin{equation}
\label{eq:M-76-conds}
\frac{q}{q_1}\le M \le 2N, \quad q_1|q, \quad 1\le y\le \frac{q_1 M}{q}, \quad (a_1,q_1)=1,
\end{equation} 
\begin{align}
\label{eq:dcase2}
q_1\gg \delta^2q,
\end{align}
and some $\gamma_1 \in \R$ such that
\begin{align*}
\delta^4 (NM)^{1/2+o(1)}&\ll N^{32\varepsilon}\left(1+\frac{q_1^4N^{4\rho}}{q^4\delta^6} \right)\left(1+\frac{N^{2+2\rho}}{\delta^4q}\right) \\ & \quad \quad \quad \quad \quad \quad \quad \quad \times  \sum_{\substack{1\le m\le q_1M/yq}}e\left(\frac{a_1 y^3q^2 m^3}{4q_1^3}+\gamma_1m\right).
\end{align*}
\end{lemma}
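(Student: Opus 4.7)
The plan is to combine the dichotomy of Lemma~\ref{lem:weyliter} with the cubic sum reduction of Lemma~\ref{lem:weyl6}. First I would apply Lemma~\ref{lem:weyliter} to $S=\sum_{n\in\Z}f(n)e(g(n))$ with $Y=N^\rho$; its hypothesis $q^{1/3}\le N \ll \delta^2 q^{1/2}$ follows from~\eqref{eq:deltaiter} combined with~\eqref{eq:iterNconds}. The output is a dichotomy: either case~\eqref{eq:itercase1} produces approximating integers $\ell_0,s_0$ with $b\equiv \ell_0\overline{s_0}\mod{d}$ and the bounds $|\ell_0|\ll \delta^{-4}(d/q)^2 N^{\rho-1}$, $|s_0|\ll \delta^{-4}(d/q)^2 N^{1+2\rho}/q$, or case~\eqref{eq:itercase2} yields $|S|^2\ll q^{o(1)}S_d$ with $S_d$ as in~\eqref{eq:Sd1-1-1}.

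Before applying Lemma~\ref{lem:weyl6} I would dispose of the small-$d$ regime: if $d\ll \delta^2 q_0$ (equivalently $q_1=(q,3)d\ll \delta^2 q$), then combining the residual $N$ term from Lemma~\ref{lem:weyl3} with the assumed lower bound~\eqref{eq:itercond} and the range~\eqref{eq:iterNconds} yields the conclusion directly with a trivial choice of $M$ and cubic sum. Hence I may assume~\eqref{eq:dcase2}.

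The main step is to apply Lemma~\ref{lem:weyl6} to $S_d$, feeding $\ell_0,s_0$ from case~\eqref{eq:itercase1} as the parameters $\ell,s$ of Lemma~\ref{lem:weyl4}. Its hypothesis~\eqref{eq:Mjdconds}, namely $\ell_0 d M_d^2 N^{11\varepsilon}\ll 1$, is checked by substituting the bounds on $\ell_0$ and $|s_0|$ and using~\eqref{eq:iterNconds}. Combining $\delta^2(qN)^{1/2+o(1)}\ll |S|^2\ll q^{o(1)}S_d$ with Lemma~\ref{lem:weyl6} and dividing both sides by $(q\ell_0/M_1)^{1/2}$ gives
\begin{equation*}
\delta^2\left(\frac{NM_1}{\ell_0}\right)^{1/2+o(1)}\ll \left(\frac{dM_d N^{16\varepsilon}}{N}\right)^{\!2}\!\left(1+\frac{q|s_0|}{\ell_0 N^2 Y}\right)\!\left(1+\frac{q|s_0|}{\ell_0 d^2 Y}\right) T,
\end{equation*}
where $T$ denotes the cubic sum in the conclusion of Lemma~\ref{lem:weyl6}. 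Substituting $Y=N^\rho$ together with the bounds on $\ell_0,s_0$, recalling $q_1=(q,3)d$, and absorbing the residual $\delta^2\ell_0^{1/2}$ factor onto the right-hand side, the three correction brackets above collapse into the two bracketed prefactors $(1+q_1^4N^{4\rho}/(q^4\delta^6))$ and $(1+N^{2+2\rho}/(\delta^4 q))$ with the $\varepsilon$-exponent doubling from $16$ to $32$; renaming $M_1$ as $M$ then gives the claim.

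The main obstacle will be coordinating the two cases of Lemma~\ref{lem:weyliter}: case~\eqref{eq:itercase1} furnishes the $\delta$-sensitive approximations needed as inputs to Lemma~\ref{lem:weyl6}, while case~\eqref{eq:itercase2} furnishes the reduction $|S|^2\ll S_d$ needed to transfer bounds on $S_d$ back to $|S|^2$. The cleanest route is to fix a divisor $d\mid q_0$ up front via Lemma~\ref{lem:weyl3} and then run Lemma~\ref{lem:weyliter} twice (with different choices of $Y$) so that both pieces of information are simultaneously available for the \emph{same} $d$. The remaining work --- the algebraic simplification of the correction factors, the verification of $q/q_1\le M_1\le 2N$ and $(a_1,q_1)=1$, and the routine $\varepsilon$-bookkeeping --- is mechanical.
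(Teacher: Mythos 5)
Your proposal correctly identifies the overall architecture (feed the output of Lemma~\ref{lem:weyliter} into Lemma~\ref{lem:weyl6}, then transfer back via the case~\eqref{eq:itercase2} reduction), and you correctly flag the central coordination problem: the two cases of Lemma~\ref{lem:weyliter} are mutually exclusive alternatives for a single choice of $Y$, yet the argument needs \emph{both} the approximants $\ell,s$ from case~\eqref{eq:itercase1} \emph{and} the reduction $|S|^2\ll q^{o(1)}S_d$ from case~\eqref{eq:itercase2}, for the same divisor $d$. What you have not supplied is the mechanism that makes both available, and that is the crux of the paper's proof. The paper runs a minimality argument: it defines $S_{d,j}$ using $Y=N^{1-j\rho}$ and lets $j_0$ be the \emph{smallest} nonnegative integer with $|S|^2\ll q^{o(1)}S_{d,j_0}$. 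One checks $j_0$ exists with $1\le j_0\le\rho^{-1}$ (existence since $S_{d,j}$ recovers the full sum once $N^{1-j\rho}\le 1$; the lower bound $j_0\ge 1$ since $S_{d,0}\ll N^{1+o(1)}$, which would contradict~\eqref{eq:deltaiter} and~\eqref{eq:itercond}). Then applying Lemma~\ref{lem:weyliter} with $Y'=N^{1-(j_0-1)\rho}$, case~\eqref{eq:itercase2} with that $Y'$ is ruled out by minimality of $j_0$, so case~\eqref{eq:itercase1} must hold and gives $\ell,s$; simultaneously, the definition of $j_0$ gives the needed reduction with $Y=N^{1-j_0\rho}$. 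Your suggestion to ``run Lemma~\ref{lem:weyliter} twice with different choices of $Y$'' does not specify which $Y$'s, nor why the two runs should land on complementary branches; without the extremal-principle argument there is no such guarantee, and the proof does not close.

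Two further, more local, problems. First, your starting choice $Y=N^\rho$ is at the wrong end of the range; the paper takes $Y=N^{1-j\rho}$, which stays close to $N$. With $Y=N^\rho$ the bound $|\ell_0|\ll\delta^{-4}(d/q)^2N^{\rho-1}$ would force $\ell_0<1$ except for $d$ essentially as large as $q$, so case~\eqref{eq:itercase1} would almost never be available, and the resulting exponents would not match the $N^{4\rho}$ and $N^{2+2\rho}$ appearing in the statement. Second, the treatment of~\eqref{eq:dcase2} is reversed: you cannot ``dispose of'' small $d$ by invoking the residual $N$ term, because the conclusion of the lemma itself asserts $q_1\gg\delta^2q$ and if $d$ (hence $q_1=(q,3)d$) were small there would be nothing to exhibit. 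The paper instead shows $d$ is \emph{forced} to be large: applying Lemma~\ref{lem:weyliter} with $Y=N$ rules out case~\eqref{eq:itercase2} (since it gives $|S|^2\ll N^{1+o(1)}$, contradicting the hypotheses), so case~\eqref{eq:itercase1} holds, and since $(b,d)=1$ implies $\ell_0\ge 1$, the bound $\ell_0\ll\delta^{-4}(d/q)^2$ gives $d\gg\delta^2 q$.
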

\begin{proof}
Let $d$ be as in Lemma~\ref{lem:weyliter} and recall the notation~\eqref{eq:bdef}. We first show the assumptions~\eqref{eq:deltaiter} and~\eqref{eq:itercond} imply~\eqref{eq:dcase2}. Recalling~\eqref{eq:q1def} it is sufficient to show 
$$d\gg \delta^2q.$$
Apply Lemma~\ref{lem:weyliter} to the sum
\begin{align*}
\sum_{n\in \Z}f(n)e\left(g(n)\right),
\end{align*}
 with $Y=N$. From~\eqref{eq:itercond}, this implies either  
\begin{equation}
\label{eq:iterprelim1}
\left|\sum_{n\in \Z}f(n)e\left(g(n)\right)\right|^2 \ll  N^{1+o(1)},
\end{equation}
or there exists integers $\ell_0,s_0$ satisfying
\begin{align}
\label{eq:iterprelim2}
b\equiv \ell_0 \overline{s_0} \mod{d}, \quad \ell_0\ll \frac{1}{\delta^4}\left(\frac{d}{q}\right)^2, \quad |s_0|\ll \frac{1}{\delta^4}\left(\frac{d}{q}\right)^2\frac{N^3}{q}.
\end{align}
By~\eqref{eq:deltaiter} and~\eqref{eq:itercond} the inequality~\eqref{eq:iterprelim1} does not hold, hence we must have~\eqref{eq:iterprelim2}. Since $(b,d)=1$, we have  $\ell_0 \ge1$ which implies~\eqref{eq:dcase2}.

With notation as in Lemma~\ref{lem:gausssum1}, for integer $j$ define 
\begin{align*}
S_{d,j}=\sum_{\substack{dN^{1-j\rho}/q\le |m|\le dN/q \\ (m,q_0)=1}}e\left(g\left(\frac{q_0m}{d}\right)\right)\sum_{n\in \Z}F_{d,m}(n)e(\rho_{q_0m/d}n)e_{d}(bmn^2),
\end{align*}
and let $j_0$ denote the smallest nonnegative integer such that 
\begin{align}
\label{eq:ub123-end}
\left|\sum_{n\in \Z}f(n)e\left(g(n)\right)\right|^2 \ll q^{o(1)}S_{d,j_0}.
\end{align}
 By Lemma~\ref{lem:weyl3}, $j_0$ exists and satisfies $j_0\le \rho^{-1}$ and $j_0\ge 1$ since otherwise 
\begin{align*}
\left|\sum_{n\in \Z}f(n)e\left(g(n)\right)\right|^2\ll N^{1+o(1)},
\end{align*}
contradicting~\eqref{eq:deltaiter} and~\eqref{eq:itercond}. Apply Lemma~\ref{lem:weyliter} to the sum
\begin{align*}
\sum_{n\in \Z}f(n)e\left(g(n)\right),
\end{align*}
 with 
\begin{align}
\label{eq:YYYYY}
Y=N^{1-(j_0-1)\rho}.
\end{align}
Either there exists $\ell,s$ satisfying
\begin{align}
\label{eq:itercase1-1}
b\equiv \ell s^{-1} \mod{d}, \quad \ell \ll \frac{1}{\delta^4}\left(\frac{d}{q}\right)^2N^{-(j_0-1)\rho}, \quad |s|\ll \frac{1}{\delta^4}\left(\frac{d}{q}\right)^2\frac{N^{3-2(j_0-1)\rho}}{q},
\end{align}
or 
\begin{align}
\label{eq:itercase2-1}
\left|\sum_{n\in \Z}f(n)e\left(g(n)\right)\right|^2 \ll q^{o(1)}S_{d,j_0-1}.
\end{align}
By definition of $j_0$,~\eqref{eq:itercase2-1} does not hold and hence we must have~\eqref{eq:itercase1-1}. Writing 
\begin{align}
\label{eq:Sdj0def66}
S_d=S_{d,j_0}=\sum_{\substack{dN^{1-j\rho_0}/q\le |m|\le dN/q \\ (m,q_0)=1}}e\left(g\left(\frac{q_0m}{d}\right)\right)\sum_{n\in \Z}F_{d,m}(n)e(\rho_{q_0m/d}n)e_{d}(bmn^2),
\end{align}
we apply Lemma~\ref{lem:weyl6}  with 
$$Y=N^{1-j_0 \rho},$$
and $\ell,s$ as in~\eqref{eq:itercase1-1}. Recalling~\eqref{eq:Mjdef}, these parameters give
\begin{align*}
M_d=\max\left\{\frac{1}{\ell ^{1/2}\delta^2}\frac{N^{1+\rho}}{q},\frac{N}{d}\right\}.
\end{align*} 
This implies that 
\begin{align}
\label{eq:Md11}
\frac{dM_d}{N}\ll \left(1+\frac{dN^{\rho}}{q\ell ^{1/2}\delta^2} \right).
\end{align}
We also have 
\begin{align}
\label{eq:otherterm123}
\left(1+\frac{q |s|}{\ell  N^2Y}\right)\ll \left(1 +\frac{d^2N^{2\rho}}{\delta^4\ell q^2}\right),
\end{align}
and 
\begin{align}
\label{eq:otherterm1234}
\left(1+\frac{q|s|}{\ell d^2 Y}\right)\ll \left(1+\frac{N^{2+2\rho}}{\delta^4\ell q}\right).
\end{align}
From~\eqref{eq:iterNconds},~\eqref{eq:itercase1-1} and~\eqref{eq:Md11}
\begin{align}
\label{eq:theotherterm}
\ell dM_d^2N^{11\varepsilon}\ll \frac{N^{2+2\rho+11\varepsilon}}{d}\left(\ell +\frac{1}{\delta^4} \right)\ll \frac{N^{2+2\rho+11\varepsilon}}{q\delta^4}<1,
\end{align}
hence the condition~\eqref{eq:Mjdconds} is satisfied. By Lemma~\ref{lem:weyl6},~\eqref{eq:Md11},~\eqref{eq:otherterm123} and~\eqref{eq:otherterm1234},
there exists integers $a_1,q_1,M,y$ satisfying 
\begin{equation}
\label{eq:weyl6-conds}
\frac{N^{1-j\rho}}{2|s|}\le M \le 2N, \quad q_1|q, \quad 1\le y\le \frac{q_1 M}{q_0}, \quad (a_1,q_1)=1,
\end{equation} 
and some $\gamma_1 \in \R$ such that
\begin{align*}
S_{d}&\ll N^{32\varepsilon}\left(1+\frac{q_1^4N^{4\rho}}{q^4\ell^2 \delta^8} \right)\left(1+\frac{N^{2+2\rho}}{\delta^4\ell q}\right)\left(\frac{q\ell}{M_1}\right)^{1/2} \\ & \quad \quad \quad \quad \quad \times  \sum_{\substack{1\le m\le q_1M_1/yq}}e\left(\frac{a_1 y^3q^2 m^3}{4q_1^3}+\gamma_1m\right).
\end{align*}
Combining the above with~\eqref{eq:itercond},~\eqref{eq:itercase1-1}~\eqref{eq:ub123-end} and~\eqref{eq:Sdj0def66} we obtain
\begin{align*}
\delta^4 (NM)^{1/2+o(1)}&\ll N^{32\varepsilon}\left(1+\frac{q_1^4N^{4\rho}}{q^4\delta^6} \right)\left(1+\frac{N^{2+2\rho}}{\delta^4q}\right) \\ & \quad \quad \quad \quad \quad \quad \quad \quad \times  \sum_{\substack{1\le m\le q_1M/yq}}e\left(\frac{a_1 y^3q^2 m^3}{4q_1^3}+\gamma_1m\right),
\end{align*}
which completes the proof.
\end{proof}
\begin{cor}
\label{cor:main}
For integers $a,q$ satisfying with $(a,q)=1$ and $\gamma\in \R$ let
\begin{align*} 
g(x)=\frac{a x^3}{q}+\gamma x.
\end{align*}
Let $\varepsilon>0$ be small and suppose $N$ is an integer and $\delta$ a real number  satisfying 
\begin{align}
\label{eq:iterNconds1}
q^{1/3}\le N \le \delta^2q^{1/2-o(1)},
\end{align}
\begin{align}
\label{eq:deltacondsmain}
\frac{N^{1/2+o(1)}}{q^{1/4}}\le \delta < 1.
\end{align} 
Let $f$ be a smooth function satisfying 
\begin{align*}
\text{supp}(f)\subseteq[N,2N], \quad f^{(j)}(x)\ll \frac{1}{x^{j}},
\end{align*}
and suppose that
\begin{align}
\label{eq:itercondmain}
\sum_{n\in \Z}f(n)e\left(g(n)\right) \gg \delta (qN)^{1/4+o(1)}.
\end{align}
There exists integers $a_1,M,h,q_1$  satisfying 
\begin{align}
\label{eq:Mh1h2conds}
1\le M \le N, \quad h\le M, \quad q_1|4q, \quad (a_1,q_1)=1,
\end{align}
and
\begin{align}
\label{eq:q2condsmain}
q_1\gg \frac{\delta^2q}{h^3},
\end{align}
such that for some $\gamma_1\in \R$ and polynomial $g_1$ of the form
$$g_1(x)=\frac{a_1 x^3}{q_1}+\gamma_1x,$$
we have 
\begin{align*}
\delta^{10} (NM)^{1/2}\ll N^{o(1)}\sum_{\substack{1\le m\le M/h}}e\left(g_1(m)\right).
\end{align*}
\end{cor}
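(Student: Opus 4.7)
The plan is to apply Lemma~\ref{lem:iterate} directly and then simplify the resulting cubic amplitude by clearing the greatest common factor between numerator and denominator. First I would verify that the hypotheses~\eqref{eq:deltaiter} and~\eqref{eq:iterNconds} of Lemma~\ref{lem:iterate} follow from~\eqref{eq:deltacondsmain} and~\eqref{eq:iterNconds1} of the corollary, provided the auxiliary parameters $\rho, \varepsilon$ inside Lemma~\ref{lem:iterate} are chosen small enough relative to the $o(1)$ appearing in~\eqref{eq:iterNconds1}: the upper bound $N \le (\delta^4 q)^{1/(2+4\rho+22\varepsilon)}$ rearranges to $N^{2+4\rho+22\varepsilon} \le \delta^4 q$, which follows from $N \le \delta^2 q^{1/2-o(1)}$ once $\rho$ and $\varepsilon$ are small.

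Having invoked the lemma, I would next estimate the two decorative factors on the right-hand side. Using $q_1 \le q$ gives $1+q_1^4 N^{4\rho}/(q^4\delta^6) \ll N^{4\rho}\delta^{-6}$, and~\eqref{eq:iterNconds1} gives $1+N^{2+2\rho}/(\delta^4 q) \ll N^{2\rho}$; combined with the $N^{32\varepsilon}$ factor these contribute at most $N^{o(1)}\delta^{-6}$, which absorbs cleanly to move $\delta^{6}$ from the right to the left and convert $\delta^4 (NM)^{1/2+o(1)}$ into $\delta^{10} (NM)^{1/2}$ modulo $N^{o(1)}$.

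The main step is the amplitude simplification. Writing $q_2 = q/q_1 \in \Z$, the amplitude becomes $a_1 y^3 q_2^2/(4q_1)$, and with $g = \gcd(a_1 y^3 q_2^2, 4q_1)$ I would set $q_1^{\sharp} = 4q_1/g$ and $a_1^{\sharp} = a_1 y^3 q_2^2/g$, so that $(a_1^{\sharp}, q_1^{\sharp}) = 1$ by construction and $q_1^{\sharp} \mid 4q$ since $4q/q_1^{\sharp} = g q_2 \in \Z$. Setting $h = y q_2 \ge 1$, the summation length $q_1 M/(yq)$ rewrites as $M/h$, and $h \le M$ follows from the bound $y \le q_1 M/q$ in~\eqref{eq:M-76-conds}. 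The factor of $2$ distinguishing $M \le 2N$ in Lemma~\ref{lem:iterate} from $M \le N$ in the corollary is absorbed by a mild dyadic adjustment.

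The last piece, which is the only point requiring genuine care, is the lower bound $q_1^{\sharp} \gg \delta^2 q/h^3$. I would argue $g \le 4\gcd(y^3 q_2^2, q_1)$ prime by prime: since $(a_1, q_1) = 1$, at each odd prime $p \mid q_1$ the factor $a_1$ contributes nothing to $v_p(g)$, while at $p = 2$ the extra $4$ in $4q_1$ costs at most a multiplicative $4$. Since $\gcd(y^3 q_2^2, q_1) \le y^3 q_2^2$, one then obtains $q_1^{\sharp} \ge q_1/(y^3 q_2^2) = q/h^3 \ge \delta^2 q/h^3$ using $\delta \le 1$, which in fact exceeds the required bound. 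Renaming $(a_1^{\sharp}, q_1^{\sharp})$ back to $(a_1, q_1)$ and setting $g_1(x) = a_1 x^3/q_1 + \gamma_1 x$ completes the argument.
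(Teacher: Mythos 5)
Your argument is correct and follows essentially the same route as the paper's proof: apply Lemma~\ref{lem:iterate}, bound the two bracketed factors by $N^{o(1)}\delta^{-6}$ (using $q_1\le q$ and the upper bound on $N$), write the amplitude as $a_1 y^3(q/q_1)^2 m^3/(4q_1)$, set $h=y\cdot (q/q_1)$, and clear the common factor with $4q_1$. The paper clears only $\gcd(h_1^3h_2^2,4q_1)$ and then asserts the resulting numerator is coprime to the new modulus; your version, taking $g=\gcd(a_1y^3q_2^2,4q_1)$ and then showing via the prime-by-prime argument that $a_1$ contributes at most a factor $4$ (since $(a_1,q_1)=1$), is slightly more careful at the prime $2$ and arrives at the same lower bound $q_1^\sharp\ge q/h^3\ge\delta^2q/h^3$, which is what~\eqref{eq:q2condsmain} requires (neither you nor the paper actually needs~\eqref{eq:dcase2} for this, even though the paper cites it).
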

\begin{proof}
Let $\varepsilon_1$ be small and $\varepsilon,\rho,M,q_1,y,a_1$ be as in Lemma~\ref{lem:iterate}. Taking $\varepsilon,\rho$  sufficiently small and using~\eqref{eq:deltacondsmain}, we obtain from Lemma~\ref{lem:iterate}
\begin{align*}
\label{eq:deltaD1-99}
\delta^4 (NM)^{1/2}&\ll N^{40\varepsilon}\left(1+\frac{q_1^4}{q^4\delta^6} \right)\sum_{\substack{1\le m\le q_1M/yq}}e\left(\frac{a_1 y^3q^2 m^3}{4q_1^3}+\gamma_1m\right).
\end{align*}
provided $N$ satisfies~\eqref{eq:iterNconds1}. Define $h_1,h_2$ by 
\begin{align*}
h_1=y, \quad h_2=\frac{q}{q_1},
\end{align*}
and let 
$$d=(h_1^3h_2^2,4q_1), \quad q_2=\frac{4q_1}{d}.$$
With this notation,~\eqref{eq:deltaD1-99} becomes 
\begin{align*}
\delta^4 (NM)^{1/2}&\le N^{40\varepsilon}\left(1+\frac{1}{h_2^4\delta^6}\right)\sum_{\substack{1\le m\le M/h_1h_2}}e\left(\frac{a_2m^3}{q_2}+\gamma_1m\right) \\
& \le \frac{N^{40\varepsilon}}{\delta^6}\sum_{\substack{1\le m\le M/h_1h_2}}e\left(\frac{a_2m^3}{q_2}+\gamma_1m\right),
\end{align*}
for some $(a_2,q_2)=1$. Defining
$$h=h_1h_2,$$
and using~\eqref{eq:dcase2}, we have  
\begin{align*}
q_2\gg \frac{\delta^2q}{h^3},
\end{align*}
which establishes~\eqref{eq:q2condsmain} and the result follows after taking $\varepsilon$ sufficiently small and renaming $a_2,q_2$.
\end{proof}
\section{Proof of Theorem~\ref{thm:main1}}
 Suppose $N$ satisfies 
\begin{align}
q^{1/3+\rho}\le N \le q^{1/2-\rho/10+o(1)},
\end{align}
and define
\begin{align}
\label{eq:maindelta}
\delta = q^{-\rho/20}.
\end{align}
Assume for contradiction that 
\begin{align*}
\sum_{1\le n \le N}e\left(\frac{an^3}{q}+\gamma n \right) \gg \delta (qN)^{1/4+o(1)}.
\end{align*}
By Lemma~\ref{lem:intervalsmooth}, there exists some $K\le 2N,$ smooth function $f$ satisfying 
\begin{align*}
\text{supp}(f)\subseteq [K,2K], \quad f^{(j)}(x)\ll \frac{1}{x^{j}}, 
\end{align*}
and real number $\gamma'$ such that
\begin{align}
\label{eq:main11-0}
\frac{\delta N^{1/4+o(1)}}{K^{1/4}}(qK)^{1/4}\ll \sum_{n\in \Z}f(n)e(g(n)+\gamma'n)).
\end{align}
Define 
\begin{align}
\label{eq:d0-----}
\delta_0=\frac{\delta N^{1/4}}{K^{1/4}},
\end{align}
and note that 
\begin{align*}
\delta_0\gg \delta.
\end{align*}
Let $\varepsilon>0$ be small. We may suppose 
\begin{align*}
K\ge q^{1/3+\varepsilon},
\end{align*}
since otherwise 
\begin{align*}
\delta \le  q^{\varepsilon}\left(\frac{q}{N^{3}}\right)^{1/12}\le q^{-\rho/4+\varepsilon}.
\end{align*}
By Corollary~\ref{cor:main}, there exists integers $a_1,M,h,q_1$  satisfying 
\begin{align}
\label{eq:Mh1h2conds-0}
1\le M \le 2K, \quad h\le M, \quad q_1|4q, \quad (a_1,q_1)=1,
\end{align}
and
\begin{align}
\label{eq:q2condsmain-0}
q_1 \gg \frac{\delta^2 q}{h^3},
\end{align}
such that for some $\gamma'_1\in \R$ 
we have 
\begin{align}
\label{eq:s1}
\delta_0^{10} (KM)^{1/2}\ll N^{o(1)}\sum_{\substack{1\le m\le M/h}}e\left(\frac{a_1 m^3}{q_1}+\gamma'_1m\right).
\end{align}
If 
\begin{align*}
h\ge q^{\rho/2},
\end{align*}
then using the trivial bound in~\eqref{eq:s1}
\begin{align*}
\delta^{10}\ll \delta_0^{10}\ll \frac{N^{o(1)}}{h}\ll q^{2\varepsilon-\rho/2}
\end{align*}
Hence we may suppose $h\le Z$. By~\eqref{eq:q2condsmain-0}, this implies 
\begin{align}
\label{eq:q1UB-00n}
q_1\gg \delta^2q^{1-3\rho/2}.
\end{align}
If
\begin{align*}
\frac{M}{h}\le q_1^{1/3+\varepsilon},
\end{align*}
then using the trivial bound in~\eqref{eq:s1} 
\begin{align}
\label{eq:s1}
\delta^{10}&\ll N^{o(1)}\left(\frac{M}{h}\right)^{1/2}\frac{1}{N^{1/2}} \ll q^{2\varepsilon}\left(\frac{q}{N^3}\right)^{1/6} \ll q^{2\varepsilon-\rho/2}.
\end{align}
If
\begin{align}
\label{eq:Mhlb}
q_1^{1/3}<\frac{M}{h}\le q_1,
\end{align} 
then by~\eqref{eq:weyl}

\begin{align*}
\delta_0^{10} (KM)^{1/2}\ll q_1^{1/4}\left(\frac{M}{h}\right)^{1/4+o(1)}+\left(\frac{M}{h}\right)^{3/4+o(1)}.
\end{align*}
Combining the above with~\eqref{eq:d0-----},~\eqref{eq:q1UB-00n} and~\eqref{eq:Mhlb}
\begin{align*}
\delta^{10}\ll q^{2\varepsilon-\rho/2}+q^{2\varepsilon-1/12-\rho/4}\ll q^{2\varepsilon-\rho/2},
\end{align*}
since we may suppose 
\begin{align}
\label{eq:rhoass123456789}
\rho\le 1/6.
\end{align}
Finally consider when 
\begin{align*}
\frac{M}{h}\ge q_1.
\end{align*}
By~\cite[Corollary~1.2]{BPPSV} and~\eqref{eq:q1UB-00n}
\begin{align}
\label{eq:s1}
\delta^{10}\ll \frac{q^{\varepsilon}}{q_1^{1/3}}+q^{2\varepsilon-1/12-\rho/4}\ll \frac{q^{\varepsilon-1/3+\rho/2}}{\delta^{2/3}}+q^{2\varepsilon-\rho/2},
\end{align}
which implies that either 
\begin{align*}
\delta^{10}\ll q^{2\varepsilon-\rho/2},
\end{align*}
or 
\begin{align*}
\delta^{32/3}\le q^{\varepsilon-1/3+\rho/2}.
\end{align*}
Combining the above estimates we obtain the desired result after recalling~\eqref{eq:rhoass123456789}, taking $\varepsilon$ sufficiently small and renaming $\rho$.

\section*{Acknowledgement} 
During this work the author was  supported  by  Academy of Finland Grant~319180 and would like to thank the University of Turku for its hospitality. 

The author is currently supported by the Max-Planck  Institute  for  Mathematics.

The author would like to thank Kaisa Matom\"{a}ki, Igor Shparlinski and Tim Trudgian for useful comments and discussions.

\end{document}